\documentclass[11pt,a4paper]{article}
\usepackage[T1]{fontenc}
\usepackage[utf8]{inputenc}
\usepackage{lmodern,microtype}
\usepackage[a4paper,margin=27mm]{geometry}
\usepackage{amsmath,amssymb,amsthm,mathtools}
\usepackage{booktabs,longtable,array,enumitem}
\usepackage{tikz}
\usetikzlibrary{arrows.meta,positioning,calc}
\usepackage{xcolor,xurl,fancyvrb}
\usepackage[unicode,colorlinks=true,linkcolor=blue!50!black,citecolor=blue!50!black,urlcolor=blue!50!black]{hyperref}
\hypersetup{pdftitle={Oracle-Tree Forcing and Wilf's Inequality for Thirteen Left Elements},pdfauthor={Michel Gaspar}}
\newtheorem{theorem}{Theorem}[section]
\newtheorem{lemma}[theorem]{Lemma}
\newtheorem{proposition}[theorem]{Proposition}
\newtheorem{corollary}[theorem]{Corollary}
\theoremstyle{definition}\newtheorem{definition}[theorem]{Definition}
\theoremstyle{remark}\newtheorem{remark}[theorem]{Remark}
\numberwithin{equation}{section}
\newcommand{\N}{\mathbb N}

\newcommand{\PP}{\mathbb P}
\newcommand{\QQ}{\mathbb Q}
\newcommand{\FF}{\mathcal F}
\newcommand{\II}{\mathcal I}
\newcommand{\MM}{\mathcal M}
\newcommand{\RO}{\operatorname{RO}}
\newcommand{\Succ}{\operatorname{Succ}}
\newcommand{\stem}{\operatorname{stem}}
\newcommand{\supp}{\operatorname{supp}}
\newcommand{\cov}{\operatorname{cov}}
\newcommand{\non}{\operatorname{non}}
\newcommand{\add}{\operatorname{add}}
\newcommand{\cof}{\operatorname{cof}}
\newcommand{\dens}{\operatorname{dens}}
\newcommand{\pw}{\pi\mathrm w}

\setlist[enumerate]{itemsep=3pt,topsep=4pt}

\fvset{fontsize=\small}
\title{Oracle-Tree Forcing and Wilf's Inequality\\
for Thirteen Left Elements}
\author{Michel Gaspar}
\date{25 September 2026}
\begin{document}
\maketitle
\begin{abstract}
We study filter-Laver forcing on $\omega^2$ with successor sets containing
final quadrants. The observation retaining the minimum coordinate and
orientation is a complete projection. Its finite nontrivial hidden-label
quotients are Cohen, whereas the full offset quotient has Boolean density
equal to the ground-model dominating number $\mathfrak d^V$. For the associated grid-meager
ideal $\mathcal I_\square$, we prove
$\operatorname{cov}(\mathcal I_\square)=\operatorname{add}(\mathcal M)$
for the usual meager ideal $\mathcal M$, and obtain further cardinal bounds.
We also analyze threshold games,
oracle degrees and the distinction between positive and filter-large
certificate sets: recurrent filter-large acceptance admits refinements
whose every branch succeeds.
As an arithmetic application, we prove Wilf's inequality $c\le13e$ for
numerical semigroups with thirteen elements below the conductor $c$,
where $e$ is the embedding dimension. A complete
arithmetic proof, including a bounded exact computation, is given in the
appendix. Its terminating certificate procedures supply a computable
quadrant tree and uniform refinements for every forcing condition. Finite
support kernels describe persistence of the registered proofs; arithmetic
absoluteness explains precisely why forcing organizes these certificates
without replacing their arithmetic justification.
\end{abstract}
\tableofcontents

\part{Quadrant oracle forcing}
\section{Introduction}

For a countably infinite alphabet $A$ and a free filter $\FF$ on $A$, filter-Laver
forcing $L_{\FF}(A)$ consists of stemmed trees whose successor set above
the stem belongs to $\FF$ \cite{khomskii}. Our example takes $A=\omega^2$
and requires every such successor set to contain a final quadrant. The
forcing itself is therefore an instance of an established construction.
What matters here is its specified observation, which retains the minimum
coordinate and orientation but forgets the offset. The residual order has
a sharp change when the set of hidden labels passes from finite to
countably infinite.

We introduce the finite support language needed here under the name
\emph{Machine Calculus}. A resource support records which registered
premises and derivation artifacts are needed for an instruction.
The definitions and their support equation are given below, so no
external account of this calculus is required. At a forcing node, withdrawing finitely
many resource tokens determines a filter of surviving instructions.
This incidence construction can be studied independently of any claim
that a particular program has supplied sound derivation artifacts.
The distinction is useful: a set-theoretic projection is a theorem
about orders, whereas a Turing readout also requires uniform algorithms.
Part~I develops the projection, quotient and ideal analysis, then
compares the order through games, finite statistical laws, and effective
certificates. Its quotient-density and grid-ideal theorems form a
set-theoretic analysis independent of the arithmetic application.
Part~II specializes these tools to a finite arithmetic
application with an independent proof in the appendix. Readers seeking
only the numerical-semigroup result can begin at
Section~\ref{sec:wilf-intro} and then read the arithmetic appendix
from Section~\ref{sec:foundations}, without the forcing sections.
We use the threshold-game construction of Miller and the filter-tree
framework of Khomskii \cite{miller,khomskii}, and give the specific
lifting and strategy arguments needed for the grid order.

We work in ZFC and write $V$ for a transitive ground universe in
forcing arguments. This is the usual model language for the forcing
theorem, not an additional existence assertion about transitive models.
A superscript $V$ on a cardinal denotes its value in the ground model.
Trees and their codes belong to $V$ unless stated otherwise. We use
``complete projection'' in the standard lifting sense, without
surjectivity onto individual conditions. We do not identify this order
with the stem-and-bound presentation of Hechler forcing; the tree
presentation and that presentation can differ \cite{palumbo}.

\subsection*{Notation}
We write $f\leq^*g$ for eventual domination in $\omega^\omega$.
The cardinals $\mathfrak b$ and $\mathfrak d$ are the least sizes of
an unbounded and a dominating family, respectively, for this order;
$\mathfrak c=2^{\aleph_0}$. The pseudointersection number $\mathfrak p$
is the least size of a family of infinite subsets of $\omega$ with
the strong finite-intersection property (each finite intersection is
infinite) and no infinite set almost
contained in every member. Let $\MM$ be the usual meager ideal on
$2^\omega$. For a proper ideal $I$ on a set $Z$, $\add(I)$ is the
least size of a subfamily of $I$ whose union is outside $I$;
$\cov(I)$ is the least size of one covering $Z$; $\non(I)$ is the
least size of an $I$-positive subset of $Z$; and $\cof(I)$ is the
least size of a cofinal subfamily of $(I,\subseteq)$.
We use the standard equality of $\cov(\MM)$ computed on Cantor
space and on Baire space.

For a partial order $P$, $\RO(P)$ is its regular-open Boolean
completion, $\mathbf2$ is the two-element Boolean algebra, and
$\dens(P)$ is the least size of a dense subset of its nonzero part.
For a topological space $(Z,\tau)$, $\dens(Z,\tau)$ is its least
dense-set size and $\pw(Z,\tau)$ is its $\pi$-weight, the least
size of a family of nonempty open sets refining every nonempty
open set. A family of conditions is \emph{centered} if every
finite subfamily has a common extension; a countable union of
centered families is $\sigma$-centered. The countable chain
condition (ccc) means every antichain is countable. For a finite
alphabet $K$, $K^{<\omega}$ is ordered by end extension.
Cohen forcing is $2^{<\omega}$ with this order; its Boolean
completion is the Cohen algebra.
For real codes $x,y$, $x\leq_Ty$ means computability with oracle
$y$, $x\equiv_Ty$ means reducibility both ways, and $x\oplus y$
is their computable join.
For any set $Z$, $[Z]^{<\omega}$ denotes its finite subsets.

\section{Resources, observations, and tree orders}

\begin{definition}[Finite support kernel]
Fix a formula language and a proof environment $\kappa$;
$\Gamma\vdash_\kappa\varphi$ means that $\varphi$ has a checked
derivation from assumptions $\Gamma$ in that environment.
A finite support kernel $K$ has disjoint finite sets $\mathcal P$ of
premise identifiers, $\mathcal V$ of claim identifiers, and $\mathcal E$
of artifact identifiers. Each $p\in\mathcal P$ is assigned a formula
$\psi_p$, and each $v\in\mathcal V$ is assigned a formula $\varphi_v$.
Each $e\in\mathcal E$ carries finite sets
$A_e\subseteq\mathcal P$ and $B_e\subseteq\mathcal V$, a conclusion
$c_e\in\mathcal V$, and a checked conditional derivation
\[
 \{\psi_p:p\in A_e\}\cup\{\varphi_v:v\in B_e\}
       \vdash_\kappa\varphi_{c_e}.
\]
The \emph{resource-token space} is the tagged disjoint union
$\mathcal U_K=\mathcal P\sqcup\mathcal E$. A complete finite
derivation tree $t$ expands every prerequisite in $B_e$; its support
$\supp(t)\subseteq\mathcal U_K$ is the union of the artifacts and premise
identifiers used in the tree. For an available-resource set
$W\subseteq\mathcal U_K$,
$\operatorname{Act}_K(W)$ is the least closure of claims under the
artifacts whose guards and artifact tokens lie in $W$; explicitly,
with $V_0=\varnothing$ and
\[
 V_{j+1}=V_j\cup
 \{c_e:e\in\mathcal E\cap W,\ A_e\subseteq W,\
                          B_e\subseteq V_j\},
 \qquad
 \operatorname{Act}_K(W)=\bigcup_{j<\omega}V_j.
\]
\end{definition}

The finite closure gives the familiar support equation
\begin{equation}\label{eq:support-equation}
 v\in\operatorname{Act}_K(W)
 \quad\Longleftrightarrow\quad
 \exists t\,[t\text{ is a complete derivation of }v
             \ \&\ \supp(t)\subseteq W].
\end{equation}
Indeed, induction on the first closure stage constructs $t$, and
induction on the height of $t$ gives the converse. The soundness of a
conclusion still depends on the truth of the premise formulas in $t$.
An observation on a set $\mathcal H$ of instruction histories is
a map $q:\mathcal H\to Y$. If a context map
$c:\mathcal H\to C$ is present, $q$ factors through $c$ when
there is $\bar q:c[\mathcal H]\to Y$ with $q=\bar q\circ c$.
Equivalently, $c(h)=c(h')$ implies $q(h)=q(h')$. This
set-theoretic quotient is
the motivation for the symbol observation below; complete projection
requires an additional lifting proof.

Hereafter let $A$ be a countably infinite instruction alphabet and
$U=\{\tau_j:j<\omega\}$ a countably infinite resource-token space,
with no repetitions in the enumeration. Let
$s:A\to[U]^{<\omega}\setminus\{\varnothing\}$ be a support assignment.
For finite $F\subseteq U$, set
\[
 X_F=\{a\in A:s(a)\cap F=\varnothing\},\qquad
 \FF_s=\{X\subseteq A:\exists F\in[U]^{<\omega}\ X_F\subseteq X\}.
\]
Assume every $X_F$ is nonempty. Since
$X_{F\cup G}=X_F\cap X_G$, this generates a proper filter.
It is free: it contains every cofinite set and no finite set.
Indeed, one may exclude a prescribed finite set of instructions by
withdrawing one token from each of their nonempty supports; properness
then rules out finite members of the filter.
For a free filter $\FF$ on a countably infinite alphabet $A$, the
conditions of $L_{\FF}(A)$ are nonempty, downward-closed trees
$T\subseteq A^{<\omega}$ with a stem $\stem(T)$, the longest
node common to all branches: below it the tree
has one path and, for every $t\supseteq\stem(T)$ in $T$,
$\Succ_T(t)=\{a:t^\frown a\in T\}\in\FF$. Here $A^{<\omega}$
is the set of finite strings, $t^\frown a$ appends $a$, and
$[T]=\{x\in A^\omega:\forall n\ (x\restriction n\in T)\}$
is the body of $T$. For $u\in T$, the cone
$T\restriction u=\{t\in T:t\subseteq u\text{ or }u\subseteq t\}$.
The forcing order is $S\leq T$ exactly when $S\subseteq T$;
smaller trees are stronger. The full tree is its largest
condition $1_{L_{\FF}(A)}$.

\begin{lemma}[Rank-tail normal form]\label{lem:rank}
Put $r(a)=\min\{j:\tau_j\in s(a)\}$. The family $\FF_s$ is a proper filter
if and only if $r[A]$ is unbounded in $\omega$. In that case
\[
 \FF_s=\{X\subseteq A:\exists N\ \{a:r(a)\ge N\}\subseteq X\}.
\]
If every rank fiber is finite, the dual ideal
$\{A\setminus X:X\in\FF_s\}$ is
$\mathrm{Fin}$, the ideal of finite subsets of $A$.
\end{lemma}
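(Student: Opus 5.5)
The plan is to reduce everything to the sets $R_N=\{a\in A:r(a)\ge N\}$ for $N<\omega$, since the rank $r(a)$ is well defined because each $s(a)$ is nonempty. The basic comparison is between $R_N$ and $X_{F_N}$, where $F_N=\{\tau_j:j<N\}$.
\begin{itemize}
\item An instruction $a$ lies in $X_{F_N}$ exactly when $s(a)$ avoids $\tau_0,\dots,\tau_{N-1}$. This is exactly the condition $r(a)\ge N$, so $X_{F_N}=R_N$.
\item Conversely, any finite $F\subseteq U$ is contained in some $F_N$, taking $N$ larger than every index occurring in $F$. The enumeration has no repetitions, so indices are unambiguous.
\item Since $X_F$ is antitone in $F$, this gives $R_N=X_{F_N}\subseteq X_F$.
\end{itemize}
So the families $\{X_F\}$ and $\{R_N\}$ are mutually cofinal downward. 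Hence the filter generated by one, as upward closure, equals the filter generated by the other, which is the displayed normal form.

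For the equivalence, first note that $X_{F\cup G}=X_F\cap X_G$, so $\FF_s$ is always closed under intersections and supersets. It is therefore a proper filter if and only if $\varnothing\notin\FF_s$, that is, if and only if every $X_F$ is nonempty. By the cofinality just shown, this is equivalent to every $R_N$ being nonempty. That says exactly that for each $N$ some $a$ has $r(a)\ge N$, i.e. $r[A]$ is unbounded in $\omega$. I will note that this is the only place where unboundedness enters, and that the normal form holds as a set identity whether or not the filter is proper.

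For the last clause, assume every fiber $r^{-1}(j)$ is finite. Then $A\setminus R_N=\bigcup_{j<N}r^{-1}(j)$ is finite, so every member of $\FF_s$ is cofinite, and the dual ideal is contained in $\mathrm{Fin}$. For the reverse inclusion, take a finite $E\subseteq A$ and set $N=1+\max\{r(a):a\in E\}$, with $N=0$ if $E$ is empty. Then $R_N\cap E=\varnothing$, so $A\setminus E\in\FF_s$. Hence the dual ideal is exactly $\mathrm{Fin}$.

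Since $A$ is infinite and all fibers are finite, properness is automatic in this case. Every step is routine; the only point needing care is the direction $F\subseteq F_N$, which relies on the injectivity of the enumeration to make $N$ well defined.
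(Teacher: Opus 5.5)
Your proposal is correct and follows the paper's proof: $X_{F_N}$ is the rank tail $\{a:r(a)\ge N\}$, the sets $F_N$ are cofinal among finite withdrawals, nonemptiness of every tail is exactly unboundedness of $r[A]$, and the last clause follows by complementation using the finite fibers. One small remark: injectivity of the enumeration is not needed for $F\subseteq F_N$, since any index of each token will do, so that step is not really the delicate point you describe.
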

\begin{proof}
For $F_N=\{\tau_0,\ldots,\tau_{N-1}\}$, $X_{F_N}=\{a:r(a)\ge N\}$.
Every finite $F$ lies in some $F_N$, so these are cofinal among
the generators. They are all nonempty precisely when $r[A]$ is
unbounded. Complementation gives the last assertion.
\end{proof}

Let $B$ be nonempty and finite, $Y=\omega\times B$, and
$E_N^B=[N,\infty)\times B$. Write $\QQ_B$ for the stemmed-tree order
on $Y^{<\omega}$ whose successor sets above the stem contain some
$E_N^B$. For a symbol map $\rho:A\to Y$, let
$\pi_\rho(T)=\{\rho\circ t:t\in T\}$. We require
\begin{align}
\tag{S}\label{eq:S}
&\forall F\in[U]^{<\omega}\ \exists N\ \forall m\ge N\ \forall b\in B\
   \exists a\in X_F\ (\rho(a)=(m,b)),\\
\tag{P}\label{eq:P}
&\forall N\ \exists F_N\in[U]^{<\omega}\
       X_{F_N}\subseteq\rho^{-1}(E_N^B).
\end{align}
For any $q_0\in\QQ_B$, write
$\QQ_B\downarrow q_0=\{q\in\QQ_B:q\leq q_0\}$.

\begin{theorem}[Observation projection]\label{thm:projection}
If \eqref{eq:S} and \eqref{eq:P} hold, then
$q_0=(\rho[A])^{<\omega}\in\QQ_B$, and
$\pi_\rho:L_{\FF_s}(A)\to\QQ_B\downarrow q_0$ is order preserving,
sends the largest condition to $q_0$, and satisfies
\[
 \forall T\ \forall q\le\pi_\rho(T)\ \exists S\le T\
       \bigl(\pi_\rho(S)\le q\bigr).
\]
Thus it is a complete projection onto the indicated cone. If $\rho$
is onto $Y$, the target is all of $\QQ_B$.
\end{theorem}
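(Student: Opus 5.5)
The plan is to verify the three claims in turn: that $\pi_\rho$ lands in the cone, that it preserves order, and that it has the lifting property. Throughout, Lemma~\ref{lem:rank} lets us treat members of $\FF_s$ as sets containing some $X_F$. Two pointwise facts come from the hypotheses. By \eqref{eq:S}, for every finite $F$ the image $\rho[X_F]$ contains a quadrant $E_N^B$. Hence $\rho[A]\supseteq E_N^B$ for some $N$, so $q_0=(\rho[A])^{<\omega}$ is a stemmed tree with empty stem and every successor set $\rho[A]$ large; thus $q_0\in\QQ_B$.

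\textbf{Image and order.} For $T\in L_{\FF_s}(A)$ and $t\supseteq\stem(T)$ in $T$, we have
\[
\Succ_{\pi_\rho(T)}(\rho\circ t)\supseteq\rho[\Succ_T(t)]\supseteq\rho[X_F]\supseteq E_N^B .
\]
Below $\rho\circ\stem(T)$ the image has a single path, since $\rho\circ t$ for $t\subseteq\stem(T)$ are initial segments of one string. Above that node, every node $\rho\circ u$ with $u\supseteq\stem(T)$ has a successor set of the union form $\bigcup\{\rho[\Succ_T(t)]: \rho\circ t=\rho\circ u\}$, which is again large. So $\pi_\rho(T)$ is a condition whose stem extends $\rho\circ\stem(T)$; nodes with several preimages only enlarge successor sets. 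Clearly $\pi_\rho(T)\subseteq q_0$, and $S\subseteq T$ implies $\pi_\rho(S)\subseteq\pi_\rho(T)$. Also $\pi_\rho(A^{<\omega})=q_0$.

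\textbf{Lifting.} Given $q\le\pi_\rho(T)$, we build $S\le T$ by recursion on nodes. First pick $u\in T$ with $\rho\circ u=\stem(q)$, which is possible since $\stem(q)\in\pi_\rho(T)$. If $|u|<|\stem(T)|$, extend the target: $q$ restricted to a node above $\rho\circ\stem(T)$ is still a condition below $q$. So we may first choose $\sigma\in q$ extending both $\stem(q)$ and $\rho\circ\stem(T)$, pick $u\in T$ with $u\supseteq\stem(T)$ and $\rho\circ u=\sigma$, and replace $q$ by $q\restriction\sigma$. Then put $u$ as the stem of $S$. For each $t\in S$ with $t\supseteq u$ we have $\rho\circ t\in q$ by induction, and we define
\[
\Succ_S(t)=\{a\in\Succ_T(t):\rho(a)\in\Succ_q(\rho\circ t)\}.
\]
Then $\pi_\rho(S)\subseteq q$ by induction, and $S\subseteq T$.

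\textbf{Main obstacle.} The key point is that $\Succ_S(t)\in\FF_s$. We know $\Succ_T(t)\supseteq X_F$, and $\Succ_q(\rho\circ t)\supseteq E_N^B$. By \eqref{eq:P} there is $F_N$ with $X_{F_N}\subseteq\rho^{-1}(E_N^B)$. So $\Succ_S(t)\supseteq X_F\cap X_{F_N}=X_{F\cup F_N}$, which lies in $\FF_s$. This is exactly where \eqref{eq:P} is needed; \eqref{eq:S} only serves to make images large.

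It remains to confirm that $S$ genuinely has stem $u$, rather than a longer one. This is harmless: the stem of $S$ extends $u$, and everything remains valid since stem length was not required. Completeness of the projection in the lifting sense then follows from the standard criterion: order preservation, $1\mapsto q_0$, and the displayed density property. If $\rho$ is onto $Y$, then $q_0=Y^{<\omega}$ is the top of $\QQ_B$, so the cone is all of $\QQ_B$.
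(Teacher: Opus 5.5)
Your proof is correct and follows the paper's argument. Each $\rho[\Succ_T(t)]$ contains a target tail by \eqref{eq:S}, and the lift cones $T$ at a preimage of $\stem(q)$, keeps exactly the nodes whose projections lie in $q$, and uses \eqref{eq:P} to get $X_{F\cup F_N}\subseteq\Succ_S(t)$. Your detour for $|\stem(q)|<|\stem(T)|$ is harmless but vacuous: $\pi_\rho(T)$ has a single path up to $\rho\circ\stem(T)$ while $\Succ_q(\stem(q))$ contains a tail, so every $q\le\pi_\rho(T)$ already satisfies $\stem(q)\supseteq\rho\circ\stem(T)$ (and the same tail shows the stem of $\pi_\rho(T)$ is exactly $\rho\circ\stem(T)$, not merely an extension).
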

\begin{proof}
By (S), $\rho[A]$ contains a full tail. If $X_F\subseteq\Succ_T(t)$,
then $\rho[\Succ_T(t)]$ contains a target tail, so $\pi_\rho(T)$ is a
condition with stem $\rho\circ\stem(T)$: the target tail there
prevents a longer common stem. Given $q\le\pi_\rho(T)$, lift $\stem(q)$ to some
$t\in T$, cone $T$ at $t$, and retain precisely extensions $u$
whose projected histories lie in $q$. At a retained node $u$, choose
$F$ with $X_F\subseteq\Succ_T(u)$ and $N$ with
$E_N^B\subseteq\Succ_q(\rho\circ u)$. By (P), choose $G$ with
$X_G\subseteq\rho^{-1}(E_N^B)$. Then
$X_{F\cup G}\subseteq\Succ_S(u)$, so the retained tree $S$ is a
condition and $\pi_\rho(S)\le q$.
\end{proof}

\begin{remark}
The target cone cannot in general be replaced by all of $\QQ_B$.
The map $(m,b)\mapsto(m+1,b)$ on a cofinite-tail alphabet satisfies
(S) and (P), but no image stem can begin with $(0,b)$.
\end{remark}

For a complete projection $\pi:P\to Q$ and a $Q$-generic filter
$H$ over $V$, the quotient order used below is
\[
 P/H=\{p\in P^V:\pi(p)\in H\},
\]
ordered as in $P$. We write $P\simeq Q$ when the two orders have
isomorphic dense suborders.

\section{The quadrant order and the hidden-label phase}\label{sec:quadrant}

Set $A_\square=\omega^2$, $E_N=[N,\infty)^2$, and
\[
 \FF_\square=\{X\subseteq\omega^2:\exists N\ E_N\subseteq X\},
 \qquad \PP_\square=L_{\FF_\square}(\omega^2).
\]
To realize this filter by tokens, take the countable resource-token
space
\[
 U_\square=\{r_m:m<\omega\}\sqcup
            \{e_{i,j}:(i,j)\in\omega^2\},
 \qquad
 s_\square(i,j)=\{r_{\min(i,j)},e_{i,j}\}.
\]
Finite withdrawals leave a final quadrant, and withdrawing
$r_0,\ldots,r_{N-1}$ leaves exactly $E_N$; hence
$\FF_{s_\square}=\FF_\square$.
At forcing level $n$ we use a disjoint copy
\begin{align*}
 U_{n,\square}
   &=\{r_{n,m}:m<\omega\}\sqcup
     \{e_{n,i,j}:(i,j)\in\omega^2\},\\
 s_{n,\square}(i,j)
   &=\{r_{n,\min(i,j)},e_{n,i,j}\},\\
 U_{\infty,\square}
   &=\bigsqcup_{n<\omega}U_{n,\square}.
\end{align*}
Each level has the same induced filter $\FF_\square$.
The filter is $F_\sigma$ in $2^{\omega^2}$:
\[
 \FF_\square=\bigcup_{N<\omega}
      \{X\subseteq\omega^2:E_N\subseteq X\}.
\]
Each set in this union is closed in the product topology. The visible alphabet is
$Y=\omega\times2$ and
\[
 \rho_\square(i,j)=\bigl(\min\{i,j\},\mathbf1_{i>j}\bigr),
 \qquad \QQ=\QQ_2.
\]
Write $\pi=\pi_{\rho_\square}$ for this coordinatewise
observation on trees.
The preimage of $E_N^2$ contains $E_N$; the image of $E_N$ is exactly
$E_N^2$. Consequently Theorem~\ref{thm:projection} applies to
$s_\square$ and $\rho_\square$.
The order $\PP_\square$ is $\sigma$-centered: trees with a common
stem have a common strengthening, obtained by finite intersection.
The same is true of $\QQ$. A $\PP_\square$-generic branch
$g\in(\omega^2)^\omega$ has visible branch $h=(m,b)=\rho_\square\circ g$.
The minimum $m$ dominates every ground-model function by pruning
successor quadrants. The orientation $b$ is Cohen over the visible
minimum extension, as follows from the finite product presentation
in the next proof.

For a nonempty finite or countably infinite label set $K$
(unrelated to the finite kernel above), put
$A_K=\omega\times2\times K$,
$E_N^K=[N,\infty)\times2\times K$, and
$\PP_K=L_{\FF_K}(A_K)$ where
$\FF_K=\{X\subseteq A_K:\exists N\ E_N^K\subseteq X\}$.
Let $\rho_K(m,b,k)=(m,b)$. At machine stage $n$ one may use distinct
resource tokens $r_{n,m}$ and $e_{n,m,b,k}$. More precisely, the
stage-$n$ token space and the full token space are
\[
 \mathcal U_{n,K}
 =\{r_{n,m}:m<\omega\}\sqcup
  \{e_{n,m,b,k}:m<\omega,\ b<2,\ k\in K\},
 \qquad
 \mathcal U_{\infty,K}
 =\bigsqcup_{n<\omega}\mathcal U_{n,K}.
\]
The support map at stage $n$ is
\begin{equation}\label{eq:stage-support}
 s_n:A_K\longrightarrow[\mathcal U_{n,K}]^{<\omega},
 \qquad s_n(m,b,k)=\{r_{n,m},e_{n,m,b,k}\}.
\end{equation}
At any fixed stage, finite withdrawal from that stage's token
space generates exactly $\FF_K$:
removing finitely many tokens leaves a full block $E_N^K$, while
withdrawing all row tokens with $m<N$ leaves exactly $E_N^K$.
The forcing tree uses the stage-$n$ copy of this filter at level $n$;
its local withdrawal witness may vary from node to node. Thus the
token space records the resource incidence, whereas $A_K$ is the
alphabet of successor instructions.
This is an incidence assertion. It becomes a Machine Calculus
implementation only when each $e_{n,m,b,k}$ is equipped with a
checked conditional derivation in a specified proof environment,
its premise guards can be tested, and the stage palettes and
derivation codes are uniformly computable.

\begin{theorem}[Hidden-label phase]\label{thm:phase}
For every nonempty finite or countably infinite $K$, the map
$\pi_K:\PP_K\to\QQ$ induced by $\rho_K$ is a complete projection.
If $H\subseteq\QQ$ is generic over $V$, then in $V[H]$
\[
 \RO(\PP_K/H)\cong
 \begin{cases}
  \mathbf2,&|K|=1,\\
  \RO(K^{<\omega}),&2\le|K|<\omega,\\
  \RO(\PP_\square/H),&|K|=\aleph_0.
 \end{cases}
\]
In particular the finite nontrivial quotient is Cohen; the last
quotient has Boolean density $\mathfrak d^V$ and is not Cohen.
\end{theorem}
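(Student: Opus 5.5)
First, the complete projection. I will apply Theorem~\ref{thm:projection} with alphabet $A_K$, the stage support map \eqref{eq:stage-support}, and $\rho=\rho_K$. Condition (S) holds because removing finitely many tokens leaves some block $E_N^K$, and $\rho_K$ maps $E_N^K$ onto $E_N^2$. Condition (P) holds with $F_N$ equal to the row tokens of rank $<N$. Since $\rho_K$ is onto $Y$, the target is all of $\QQ$.

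The case $|K|=1$ is then immediate. In that case $\rho_K$ is a bijection $A_K\to Y$ carrying $\FF_K$ to the quadrant filter of $\QQ$, so $\pi_K$ is an isomorphism. Hence $\PP_K/H$ is the filter $H$ itself, its conditions are pairwise compatible, and $\RO(\PP_K/H)\cong\mathbf2$.

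For $2\le|K|<\omega$, I plan to exhibit, in $V[H]$, a dense suborder of $\PP_K/H$ isomorphic to $K^{<\omega}$ (up to harmless finite modifications). Let $h$ be the visible generic branch. A condition $T\in\PP_K/H$ is described by three things: its stem, which is a lift of an initial segment of $h$ together with a finite label string; a ground-model witnessing function $N_T$ giving the quadrant at each node; and the pruning. Since $K$ is finite, $E_N^K\cap\rho_K^{-1}(m,b)=\{(m,b)\}\times K$ is the full label fibre once $m\ge N$. Genericity of $H$ will show that every node of $T$ along $h$ beyond the stem has all labels available, because $h$ eventually escapes every ground-model bound; I will make this precise by a density argument in $\QQ$ below $\pi(T)$. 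I then intend to prove that, given $T$, the map $T\mapsto$ (label string of the stem) is dense onto $K^{<\omega}$ above the stem's labels. The map also reflects incompatibility, since two conditions in the quotient with incomparable label stems are incompatible and those with comparable label stems are compatible modulo the visible part, which $H$ handles. The main obstacle is exactly this: conditions in $\PP_K/H$ whose trees prune labels at nodes along $h$. I will handle it by extending the stem along $h$ past the finitely many nodes where $h(n)$ falls below the tree's quadrant bound. Finiteness of $K$ makes the remaining nodes full. Since $K^{<\omega}$ is countable and atomless, it follows that $\RO\cong$ the Cohen algebra.

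For $|K|=\aleph_0$, I fix a bijection $K\cong\omega$ and map $(m,b,k)$ to the pair $(i,j)$ with $\min(i,j)=m$, orientation $b$, and offset $|i-j|$ determined by $k$. Every offset $\ge1$ is available for $b\in\{0,1\}$, but offset $0$ (the diagonal) breaks the bijection. I will therefore either absorb the diagonal into one orientation by shifting the offset, or pass to dense suborders. The filters agree: $E_N^K$ corresponds, up to a filter-small modification, to $E_N$, because the quadrant is exactly $\min\ge N$. This gives an isomorphism of dense suborders commuting with the projections, hence $\PP_K/H\simeq\PP_\square/H$.

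For the density claim, the upper bound $\mathfrak d^V$ comes from choosing, for each node along $h$, a ground-model function bounding which offsets are still permitted; conditions determined by a stem together with a function from a dominating family in $V$ are dense. For the lower bound, I will show that a dense set of size $<\mathfrak d^V$ yields fewer than $\mathfrak d$ ground functions. Some $g\in V$ is then not dominated by them, and the condition pruning offsets below $g$ is refined by no member of the family. Because $\mathfrak d^V$ can be uncountable, this also excludes the Cohen algebra. If $\mathfrak d^V=\aleph_0$ were possible this would fail, but $\mathfrak d\ge\aleph_1$ always, so the conclusion holds.
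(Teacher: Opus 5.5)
Your complete-projection argument and the cases $|K|=1$ and $|K|=\aleph_0$ are fine. Your ``absorb the diagonal into one orientation'' is exactly the paper's $\theta(m,0,k)=(m,m+\nu(k))$, $\theta(m,1,k)=(m+\nu(k)+1,m)$, which sends $E_N^K$ onto $E_N$ and satisfies $\rho_\square\circ\theta=\rho_K$.

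The genuine gap is the lower bound $\dens\ge\mathfrak d^V$, and the ``not Cohen'' clause rests on it. Your witness, ``the condition pruning offsets below $g$'', cannot be a grid condition. Every final quadrant contains $(N+k,N)$ and $(N,N+k)$ for all $k$, so at each node a condition can forbid a given offset only at finitely many minima. A one-level constraint such as $\min(i,j)\ge g(|i-j|)$ contains no $E_N$ unless $g$ is bounded.

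The missing idea is to couple an offset with the \emph{next} minimum. The paper's $S_f$ consists of the strings $v$ with $\min(v(n+1))\ge f(|v(n)_0-v(n)_1|)$ whenever $n+1<|v|$, where $f(0)=f(1)=0$ and $f(k)\ge k$. This is a pure-quadrant tree. Lifting each visible history with canonical offsets $d(n)=b(n)$ gives $\pi(S_f)=1_\QQ$, so $S_f$ lies in every quotient.

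You then still need three steps that you do not supply.
\begin{enumerate}
\item A ccc argument that turns a Boolean-dense family of size $\kappa<\mathfrak d^V$ in $V[H]$ into a ground pool $\{T_j\}$ of fewer than $\mathfrak d^V$ trees. Names for tree representatives below its members have only countably many possible values.
\item The correct functions to diagonalize against, namely $F_{j,t,M}(k)=N_j(t^\frown(M+k,M))$, with $f$ chosen not eventually dominated by any of them.
\item A density argument in $\QQ$ below each $\pi(T_j)$. For $q\le\pi(T_j)$ and $M$ above both local thresholds, all nodes $t^\frown(M+k,M)$ project to the single visible node $s^\frown(M,1)$. Let $B$ be the threshold of $q$ there. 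Choose $k>B$ with $f(k)>F_{j,t,M}(k)$, and a next cell $(L,L)$ with $\max\{F_{j,t,M}(k),B\}\le L<f(k)$. The resulting node lies in $T_j$ but not in $S_f$. This gives $U\le T_j$ with $\pi(U)\le q$ and $U\perp S_f$.
\end{enumerate}
Only this shows that no pooled tree is separatively below $S_f$ in $V[H]$, which contradicts Boolean density.

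There are two smaller points.

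First, for the upper bound, the threshold maps must form a \emph{pointwise} cofinal family: a dominating family closed under finite modifications, after enumerating the nodes. You should also note that dense subsets of $\PP_\square$ remain dense in the quotient, by lifting below $\pi(T)$ and using genericity. Pure-quadrant trees threshold only the minimum, so ``a function bounding which offsets are still permitted'' misdescribes them.

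Second, for finite $K$, your in-$V[H]$ route is a workable alternative to the paper's ground-model decomposition $\PP_K\simeq D\times2^{<\omega}\times K^{<\omega}$. You cone along $h$ until each fibre is a cylinder of $K^\omega$, so the separative quotient is $K^{<\omega}$. However, you cite the wrong reason for finiteness: $E_N^K\cap\rho_K^{-1}(m,b)=\{(m,b)\}\times K$ holds for every $K$. What finiteness actually gives is that over each visible node $v$ only finitely many label histories of $T$ occur. Hence $\max\{N_T(t):\rho_K\circ t=v\}$ is a finite ground-model function that the generic minimum eventually exceeds. For infinite $K$ this maximum can be infinite, and that is exactly where the quotient stops being Cohen.
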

The maps used here fit into the commuting observation diagram
\[
\begin{tikzpicture}[baseline=(current bounding box.center),>=Stealth,
 every node/.style={inner sep=2pt}]
\node (grid) at (0,1.45) {$\PP_\square$};
\node (finite) at (4,1.45) {$\PP_{K_k}$};
\node (visible) at (2,0) {$\QQ$};
\draw[->] (grid) -- node[above] {$\alpha_k$} (finite);
\draw[->] (grid) -- node[below left] {$\pi_\square$} (visible);
\draw[->] (finite) -- node[below right] {$\pi_{K_k}$} (visible);
\end{tikzpicture}
\]
Here $K_k=\{0,\ldots,2^k-1\}$ and $\alpha_k$ records the offset
modulo $2^k$, as defined in Corollary~\ref{cor:noncohen}.
\begin{proof}
The projection hypotheses follow from the two exact block relations
$\rho_K[E_N^K]=E_N^2$ and
$E_N^K\subseteq\rho_K^{-1}(E_N^2)$.
For finite $K$, let $C=2\times K$ and let $D$ be the stemmed-tree
order on $\omega^{<\omega}$ whose successor sets above a stem
contain a final interval. Synchronizing stem lengths gives
a dense suborder of $D\times C^{<\omega}$. A pair $(D_0,\eta)$ of equal
stem length maps to the paired tree with minimum projection $D_0$,
fixed initial label string $\eta$, and every $C$-label continuation.
Its range is dense in $\PP_K$: at each minimum node $v$, only
finitely many label histories are present, so the maximum of their
finitely many successor thresholds supplies a common minimum tail.
Thus
\[
 \PP_K\simeq D\times2^{<\omega}\times K^{<\omega},
 \qquad \QQ\simeq D\times2^{<\omega},
\]
and $\pi_K$ drops the third factor on these dense orders. In the
$\QQ$-generic extension the residual completion is
$\RO(K^{<\omega})$. For $|K|=1$ it is trivial; for finite
$|K|\ge2$ it is the completion of a countable atomless forcing,
hence the Cohen algebra.

For countably infinite $K$, fix a bijection $\nu:K\to\omega$ and
define
\[
 \theta(m,0,k)=(m,m+\nu(k)),\qquad
 \theta(m,1,k)=(m+\nu(k)+1,m).
\]
This bijection sends $E_N^K$ exactly to $E_N$, and
$\rho_\square\circ\theta=\rho_K$. Coordinatewise application is
therefore an order isomorphism commuting with the projections. The
density assertion follows from Theorem~\ref{thm:density} below.
\end{proof}

\section{The full offset quotient}

Let $H\subseteq\QQ$ be generic and $h=(m,b)$ its branch. A target
generic filter is recovered from its branch: if a ground target tree
contains $h$, a sufficiently long cone of that tree is compatible
with any target condition containing $h$, and the dense compatibility
decision puts the tree in $H$. Thus $V[H]=V[m,b]$. In this model let
\[
 R_H=\PP_\square/H
    =\{T\in\PP_\square^V:\pi(T)\in H\}.
\]
For $d\in\omega^\omega$ with $b(n)=1\Rightarrow d(n)>0$, reconstruct
the full branch by
\begin{equation}\label{eq:reconstruct}
 g_{h,d}(n)=
 \begin{cases}
  (m(n)+d(n),m(n)),&b(n)=1,\\
  (m(n),m(n)+d(n)),&b(n)=0.
 \end{cases}
\end{equation}
Put
$X_h=\{d\in\omega^\omega:\forall n\,
 (b(n)=1\Rightarrow d(n)>0)\}$, the closed product of
admissible offsets, and
$K_T(h)=\{d\in X_h:g_{h,d}\in[T]\}$.

\begin{proposition}[Closed-fiber order]\label{prop:fiber}
For each $T\in R_H$, $K_T(h)$ is nonempty and closed in the ordinary
product topology. For $T,U\in R_H$,
\[
 T\parallel U\quad\Longleftrightarrow\quad
 K_T(h)\cap K_U(h)\ne\varnothing,
 \qquad
 T\leq_{\mathrm{sep}}U\quad\Longleftrightarrow\quad
 K_T(h)\subseteq K_U(h),
\]
where $T\parallel U$ means compatibility and
$T\leq_{\mathrm{sep}}U$ means that every $S\leq T$ is compatible
with $U$. Hence the separative
quotient of $R_H$ is the inclusion order of its nonempty
\emph{ground-coded} closed fibers.
\end{proposition}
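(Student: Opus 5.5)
The plan is to reduce everything to two ingredients: the canonical bijection $d\mapsto g_{h,d}$ between $X_h$ and $\pi$-fibers over $h$, and a genericity argument in $V[H]$ stating that every $T\in R_H$ and every ground condition below it have branches over $h$ in exactly the expected places.

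\emph{Closedness and the bijection.} For fixed $h$, the map $d\mapsto g_{h,d}$ is a homeomorphism from $X_h$ onto $\{g\in(\omega^2)^\omega:\rho_\square\circ g=h\}$. Indeed, $\rho_\square(g_{h,d}(n))=(m(n),b(n))$ exactly when $d(n)>0$ whenever $b(n)=1$, and $d(n)=|i-j|$ can be read back from $g(n)=(i,j)$. Each coordinate depends only on $d(n)$. Since $[T]$ is closed, $K_T(h)$ is the preimage of a closed set and hence closed.

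\emph{Nonemptiness.} Given $T\in R_H$, so that $\pi(T)\in H$, I will show $h\in[\pi(T)]$ lifts to a branch of $T$. Consider the tree of nodes $u\in T$ with $\rho\circ u\subseteq h$. For each $n$, the node $h\restriction n$ lies in $\pi(T)$, so some $u\in T$ projects to it. König's lemma is not available, since the tree is infinitely branching, so I will use genericity instead. Let $D$ be the set of $q\le\pi(T)$ such that either some stem of length $n$ of $q$ is not in the projection of the subtree of $T$-nodes extendable to infinite branches, or else $q\perp\pi(T)$. The cleaner route is to build the branch recursively. Having $u\in T$ with $\rho\circ u=h\restriction n$ and $\pi(T\restriction u)\in H$, the condition $\pi(T\restriction u)$ has successor set containing a tail $E^2_N$. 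The sets $\{\pi(T\restriction u^\frown a):\rho(a)=y\}$ for $y\in E^2_N$, together with the finitely many lower successors, are predense below $\pi(T\restriction u)$. Hence $H$ picks some $a$ with $\rho(a)=h(n)$ and $\pi(T\restriction u^\frown a)\in H$. The pieces $\pi(T\restriction u^\frown a)$ with the same $\rho(a)$ need not be distinct, but any one with $\rho(a)=h(n)$ lies in $H$ because $H$ is a filter and that cone equals $\pi(T)\restriction(h\restriction n+1)$ up to subtrees. I expect to state this carefully: since $\pi(T\restriction u^\frown a)\subseteq\pi(T\restriction u)\restriction(h\restriction n+1)$ and the union over the finitely many $a$ with $\rho(a)=h(n)$ (here the fiber $\{a:\rho(a)=h(n)\}\cap\Succ_T(u)$ may be infinite), the main care is this step, and I will use the lifting property of Theorem~\ref{thm:projection} to get a density argument. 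Recursion then gives $d\in K_T(h)$.

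\emph{Compatibility and separative order.} If $T\parallel U$ in $R_H$, take a common extension $S\in R_H$. Then $\emptyset\ne K_S(h)\subseteq K_T(h)\cap K_U(h)$. Conversely, given $d\in K_T(h)\cap K_U(h)$, I must find $S\le T,U$ with $\pi(S)\in H$; note that $T\cap U$ need not be a condition. By genericity, work in $V$ with names. Suppose toward contradiction that some $q\in H$ forces $\dot d\in K_T\cap K_U$ while every $S\le T,U$ has $\pi(S)\perp q$. Using $\sigma$-centeredness (same-stem trees intersect), extend both trees to cones at the lifted node $g_{h,d}\restriction n$. Beyond a long enough common stem $t$, the intersection $T\restriction t\cap U\restriction t$ is a condition, and its projection contains $h$. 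A density argument in $\QQ$ then yields that projection in $H$. For the order: if $K_T\subseteq K_U$, any $S\le T$ in $R_H$ has $K_S\cap K_U=K_S\ne\emptyset$, hence is compatible with $U$. Conversely, if $d\in K_T\setminus K_U$, some finite initial segment of $g_{h,d}$ leaves $U$, and coning $T$ at that node gives $S\le T$ in $R_H$ (by the nonemptiness argument applied with that node) that is incompatible with $U$. The final sentence then follows, since the separative quotient identifies $T,U$ exactly when the fibers coincide. The main obstacle is the genericity step ensuring that cones along $g_{h,d}$ stay in $R_H$.
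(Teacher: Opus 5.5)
Your arguments for closedness, for both directions of the compatibility equivalence, and for the separative equivalence are the paper's. They rest on one fact: a ground $q\in\QQ$ with $h\in[q]$ lies in $H$. Otherwise some $p\in H$ is incompatible with $q$, yet coning $p\cap q$ along $h$ beyond both stems gives a common extension. The cone of $T\cap U$ at a long prefix of $g_{h,d}$ is a ground condition whose projection contains $h$, so that fact puts it in $R_H$. Your names-and-contradiction framing is not needed.

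The genuine difference is nonemptiness. The paper forces with $R_H$ below $T$ over $V[H]$ and obtains a branch of $T$ over $h$ in that further extension. It then pulls the branch back to $V[H]$ by absoluteness of well-foundedness for the tree $\{u\in T:\rho_\square\circ u\subseteq h\}$. This is short, but it relies on the standard fact that quotient-generics yield $\PP_\square$-generics over $V$ above $H$. Your recursion stays inside $V[H]$ and uses only the lifting property of Theorem~\ref{thm:projection} and genericity of $H$. It works once the inductive step is stated correctly.

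The step should read as follows. For $u\supseteq\stem(T)$ in $T$ with $\pi(T\restriction u)\in H$, the ground family $\{\pi(T\restriction u^\frown a):a\in\Succ_T(u)\}$ is predense below $\pi(T\restriction u)$. To see this, given $q\le\pi(T\restriction u)$, lift to $S\le T\restriction u$ with $\pi(S)\le q$, then cone $S$ at one of its nodes of length $|u|+1$, which has the form $u^\frown a$. Hence $H$ contains some $\pi(T\restriction u^\frown a)$, and $\rho_\square(a)=h(|u|)$ follows automatically.

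Delete your assertion that every $a$ with $\rho_\square(a)=h(n)$ has its projected cone in $H$. It is not needed, and it is false in general. A condition may demand next minimum at least $k$ above $u^\frown(M,M+k)$. Then for $k>m(|u|+1)$ the projected cone misses $h$, even though all these $a$ share the visible symbol $h(|u|)$.

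One attribution in your last step needs correcting. Take the prefix $u$ of $g_{h,d}$ outside $U$. That $T\restriction u\in R_H$ does not come from your nonemptiness recursion, which takes membership in $R_H$ as its input. It comes from the fact above: $g_{h,d}\in[T\restriction u]$ gives $h\in[\pi(T\restriction u)]$, so $\pi(T\restriction u)\in H$.
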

\begin{proof}
Closedness follows because membership in $[T]$ is decided by all
finite prefixes. A quotient-generic extension below $T$ has a
branch through $T$ over $h$. If the countable-alphabet tree for
$K_T(h)$ had no branch already in $V[H]$, it would be well founded
there and carry an ordinal rank; further forcing cannot create a
branch through a well-founded ground tree. Thus $K_T(h)\ne\varnothing$.
If two fibers share $d$, their reconstructed branch belongs to both
trees. Cone their intersection beyond both stems; the intersection
still contains a quadrant at each node and its projection lies in
$H$. The converse follows by taking a point in the fiber of a
common extension. If $K_T(h)\subseteq K_U(h)$, every strengthening
of $T$ has fiber intersecting $K_U(h)$. If inclusion fails, choose
$d\in K_T(h)\setminus K_U(h)$. Closedness of $K_U(h)$ gives a finite
grid prefix $u$ of $g_{h,d}$ outside $U$. The ground cone
$T\restriction u$ survives the quotient and is incompatible with
$U$, so $T\not\leq_{\mathrm{sep}}U$.
\end{proof}

\begin{theorem}[Exact residual density]\label{thm:density}
\[
 \QQ\Vdash\dens\bigl(\RO(R_H)\bigr)=\check{\mathfrak d}^{\,V}.
\]
Here density means the least size of a dense subset of the nonzero
Boolean completion.
\end{theorem}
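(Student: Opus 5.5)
The plan is to prove the two inequalities separately in $V[H]$, using Proposition~\ref{prop:fiber} to replace $\RO(R_H)$ by the inclusion order of the nonempty ground-coded closed fibers $K_T(h)$, $T\in R_H$. A dense subset of the nonzero part of $\RO(R_H)$ may be taken inside the separative quotient of $R_H$, up to a change of size bounded by $\aleph_0$ plus itself. So it suffices to show that the least size of a family $\mathcal D$ of such fibers satisfying
\[
\forall T\in R_H\ \exists K_U(h)\in\mathcal D\ \bigl(K_U(h)\subseteq K_T(h)\bigr)
\]
equals $\mathfrak d^V$. Since $\mathfrak d^V\ge\aleph_1$, countable sets never reach the bound, so the transfer between Boolean density and fiber density is harmless.

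\textbf{Upper bound.} Fix in $V$ a dominating family $\{f_\alpha:\alpha<\mathfrak d\}$. For each $\alpha$ and each node $u\in(\omega^2)^{<\omega}$, form the ground tree $T_{u,\alpha}$ with stem $u$ whose successor set at a node $t\supseteq u$ of length $n$ is $E_{f_\alpha(n)}$, possibly coded also by finitely many stem data; there are $\mathfrak d^V\cdot\aleph_0$ of them. Given $T\in R_H$, define in $V$ the function $g_T(n)$ as the maximum of the quadrant thresholds of $T$ at the finitely many nodes of length $n$ whose coordinates are bounded by some ground function. The difficulty is that $T$ has infinitely many nodes per level, so instead I would use the \emph{Laver-style} fusion idea: $T$ is determined by a function from nodes to thresholds, which is coded by a ground function $\omega\to\omega$ via an enumeration of $(\omega^2)^{<\omega}$. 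Pick $\alpha$ with $f_\alpha\ge^*$ that code. Then a tree whose threshold at a node $t$ is $f_\alpha(\ulcorner t\urcorner)$, coned at a suitable finite prefix of a branch in $K_T(h)$ above which domination holds, has fiber inside $K_T(h)$. The finitely many exceptional nodes are handled by the countably many stems $u$. I still need to check that this cone survives in $R_H$. Since the visible branch $h$ passes through the projection of the cone, and the minimum coordinate $m$ dominates ground functions (Section~\ref{sec:quadrant}), the projected tree eventually contains $h$, so it lies in $H$ by the recovery argument.

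\textbf{Lower bound.} This is the main obstacle. Suppose $\mathcal D=\{K_{U_\xi}(h):\xi<\lambda\}$ is dense with $\lambda<\mathfrak d^V$. From each $U_\xi$ I would extract a function $\varphi_\xi\in\omega^\omega\cap V$: the successor threshold of $U_\xi$ along the nodes on the reconstructed branches with offset $0$ (or $1$). The family $\{U_\xi\}$ is a set in $V[H]$, so I first need to cover it by a family in $V$ of size $\lambda$. Here I would use that $\QQ$ is ccc, so each name can be covered by a countable set of ground candidates, giving $\lambda\cdot\aleph_0<\mathfrak d^V$ ground functions. Some ground $f$ is then not dominated by any of them. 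I would build in $V$ a tree $T$ whose offsets are forced large: at level $n$ successors are restricted to offsets $d(n)\ge f(n)$, with the minimum coordinate free. This can be arranged in the quadrant filter only partially, because $E_N$ cannot restrict offsets. So I would instead prune by requiring $\max(i,j)\ge f(\text{code of the node})$, which is a cofinite, and hence quadrant-containing, restriction. Any $K_{U_\xi}(h)\subseteq K_T(h)$ would force every fiber point of $U_\xi$ to have offsets above $f$ on the relevant nodes. Choosing a node where $\varphi_\xi<f$, the offset-minimal branch of $U_\xi$ at that node lies in $K_{U_\xi}(h)\setminus K_T(h)$, a contradiction. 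The delicate part is making the node where $f$ escapes $\varphi_\xi$ actually lie on the visible branch $h$. I would handle this by a genericity argument over $\QQ$: the set of visible conditions forcing $h$ through such a node is dense, because $f$ is unbounded over each countable piece of the $\varphi_\xi$ data.
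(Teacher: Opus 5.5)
Your upper bound is the paper's construction: pure-quadrant trees indexed by a dominating family and a stem. The step placing $T_{u,\alpha}$ in $R_H$ does not work as written, however. That $m$ dominates ground functions shows only that $h$ \emph{eventually} respects the thresholds of $\pi(T_{u,\alpha})$. Membership in $H$ requires $h\in[\pi(T_{u,\alpha})]$ for the specific $u$. Replace this with a density argument: for each $q\le\pi(T)$, lift $q$ to some $T'\le T$ with $\pi(T')\le q$ and take a family tree $P\le T'$. These projections $\pi(P)$ are dense below $\pi(T)$, so $H$ contains one.

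The lower bound has a genuine gap. Your witness $T$, with successor sets $\{(i,j):\max(i,j)\ge f(\ulcorner t\urcorner)\}$, is a legitimate condition with $\pi(T)=1_{\QQ}$. But an $f$ that merely escapes the functions $\varphi_\xi$ does not keep pool trees from lying below it. Here is a counterexample. Let a pool tree $U$ be pure-quadrant with stem $\langle(0,5)\rangle$ and thresholds $N_U$. Take $f(\ulcorner\varnothing\urcorner)=0$, and $f(\ulcorner t\urcorner)\le N_U(t)$ for every $t\in U$ above the stem (all of which begin with an offset-$5$ cell). On the infinite set of nodes all of whose offsets are at most $1$, choose $f$ to escape every $\varphi_\xi$; this is possible since there are fewer than $\mathfrak d^V$ of them. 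Then $f$ satisfies your hypothesis, yet $U\subseteq T$, and no node where $f$ exceeds $\varphi_U$ lies in $U$.

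The real difficulty sits in the density step you defer. The violating cell must have minimum at least the next threshold $B$ of an \emph{arbitrary} visible condition $q\le\pi(U)$. Such a $q$ can push that threshold above any ground bound at every visible node beyond its stem. So escape points attached to one lift of a visible node are useless, in particular the canonical offsets $0,1$ from which you read $\varphi_\xi$.

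The missing idea is to vary only the hidden offset over a fixed visible node:
\begin{enumerate}
\item Lift $s\in q$ to $t\in U$ and fix $M\ge\max\{N_U(t),N_q(s)\}$.
\item Use the nodes $t_k=t^\frown(M+k,M)\in U$ for $k\ge1$. They all project to $s^\frown(M,1)\in q$, so $B$ is a single number.
\item If the witness's bound at some $t_k$ with $k>B$ exceeds both $k$ and $N_U(t_k)$, then the cell $(L,L)$ with $L=\max\{N_U(t_k),B\}$ stays in $U$ and over $q$ but leaves the witness.
\item Coning there yields $U'\le U$ with $\pi(U')\le q$ and $U'$ incompatible with the witness.
\end{enumerate}
The paper makes this automatic by keying the witness $S_f$ to the offset, requiring $m(n+1)\ge f(d(n))$ with $f(0)=f(1)=0$ so that $\pi(S_f)=1_{\QQ}$. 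It then chooses $f$ to escape the fewer than $\mathfrak d^V$ functions $k\mapsto N_j(t^\frown(M+k,M))$.

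Your $T$ could be repaired similarly. Choose $f$ separately on each of the countably many pairwise disjoint code sets $\{\ulcorner t^\frown(M+k,M)\urcorner:k\ge1\}$, so that it escapes $k\mapsto\max\{N_U(t_k),k\}$ for every pool tree $U$. That requirement is not the one you state, and your proposal does not supply it.
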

The upper bound below uses a ground-model dominating family to
produce dense pure-quadrant trees. The lower bound constructs one
tree $S_f$ that defeats every proposed smaller family: a shared
visible successor first chooses an offset $k$, then its required next
minimum exceeds the corresponding old threshold.
\begin{center}
\begin{tikzpicture}[>=Stealth,node distance=6mm,
 box/.style={draw,align=center,text width=110mm,inner sep=4pt,font=\small}]
\node[box] (names) {A proposed quotient Boolean dense family of size
 $\kappa<\mathfrak d^V$};
\node[box,below=of names] (pool) {Tree refinements of its Boolean members:
 ccc names give a ground-tree pool of size $<\mathfrak d^V$};
\node[box,below=of pool] (diag) {A single diagonal function defines $S_f$:
 $\pi(S_f)=1_{\QQ}$, but no pooled representative is separatively below $S_f$};
\draw[->] (names)--(pool);
\draw[->] (pool)--(diag);
\end{tikzpicture}
\end{center}
\begin{proof}
Enumerate $(\omega^2)^{<\omega}$ in $V$. An eventually dominating
family of size $\mathfrak d^V$, closed under finite modifications,
is pointwise cofinal among threshold maps
$F:(\omega^2)^{<\omega}\to\omega$. For every finite stem $s$ and
such $F$, let $T_{s,F}$ use exactly $E_{F(t)}$ above each node
$t\supseteq s$. These $\mathfrak d^V$ pure-quadrant trees are dense
in $\PP_\square$. They remain dense in $R_H$: below $T\in R_H$,
for each $q\le\pi(T)$ first lift $q$ below $T$, then choose a pure
tree below the lift. Genericity of $H$ meets the projections of
these choices. This proves the upper bound.

For the lower bound, fix $\{T_j:j\in J\}\subseteq\PP_\square^V$
with $|J|<\mathfrak d^V$ and choose a quadrant threshold $N_j(t)$
at each node of each tree. For every admissible $(j,t,M)$ with
$M\ge N_j(t)$ define, for $k\ge2$,
\[
 F_{j,t,M}(k)=N_j\bigl(t^\frown(M+k,M)\bigr).
\]
There are fewer than $\mathfrak d^V$ such functions. Choose $f_0$
not eventually dominated by any of them, and put
$f(0)=f(1)=0$, $f(k)=\max\{f_0(k),k\}$ for $k\ge2$.
Let $S_f$ have full root successors and require at every later
stage $m(n+1)\ge f(d(n))$, where $d(n)$ is the offset of the
preceding grid symbol. More precisely, a finite grid string
$v$ lies in $S_f$ exactly when
\[
 \forall n\ (n+1<|v|\ \Longrightarrow\
 \min(v(n+1))\ge
 f(|v(n)_0-v(n)_1|)).
\]
This is a grid condition and
$\pi(S_f)=1_{\QQ}$: every finite visible history lifts using
the canonical offsets $d(n)=b(n)\in\{0,1\}$.

Fix $j$ and any $q\le\pi(T_j)$. For nodes above its stem choose
thresholds $N_q(v)$ with $E_{N_q(v)}^2\subseteq\Succ_q(v)$.
Lift a node $s\in q$ beyond the
stems to $t\in T_j$, and fix
$M\ge\max\{N_j(t),N_q(s)\}$. For $k\ge2$ let
$t_k=t^\frown(M+k,M)\in T_j$; all $t_k$ project to the same
$s'=s^\frown(M,1)\in q$. Put $B=N_q(s')$. Choose $k>B$ with
$f(k)>F_{j,t,M}(k)$ and set
$L=\max\{F_{j,t,M}(k),B\}<f(k)$.
Then $u=t_k^\frown(L,L)\in T_j$ and $\rho_\square\circ u\in q$, while
$u\notin S_f$. Coning $T_j$ at $u$ and retaining projected
histories in a cone of $q$ yields a ground condition $U\le T_j$
with $\pi(U)\le q$ and $U\perp S_f$. Thus visible conditions forcing
that $T_j$ is not separatively below $S_f$ are dense below
$\pi(T_j)$.

If a condition forced a quotient Boolean dense family of size
$\kappa<\mathfrak d^V$, choose for each nonzero Boolean member a
name for a ground-tree quotient condition below it. The ccc of
$\QQ$ gives countably many possible ground values for each such
name, decided by a maximal antichain.
Their union has ground size at most
$\max\{\kappa,\aleph_0\}<\mathfrak d^V$. The preceding diagonal
construction makes none of these representatives separatively
below $S_f$ in the visible generic extension, since each failure
is forced densely below its projection. As $\pi(S_f)=1_\QQ$,
$S_f$ is a nonzero quotient condition, contradicting Boolean
density. Thus the lower bound holds. Since
$\QQ$ is ccc, $\mathfrak d^V$ remains a cardinal in $V[H]$.
\end{proof}

\begin{corollary}\label{cor:noncohen}
The countably infinite hidden-label quotient is not Cohen-equivalent.
It is not $\omega^\omega$-bounding: finite residues of the offset
give Cohen reals over $V[H]$.
\end{corollary}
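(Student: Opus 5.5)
The corollary has two parts, and I would prove them in order.

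\emph{Not Cohen-equivalent.} In $V[H]$ the Cohen algebra has a countable dense subset, so its Boolean density is $\aleph_0$. Theorem~\ref{thm:density} gives $\dens(\RO(R_H))=\mathfrak d^V$. Since $\mathfrak d^V\ge\aleph_1$ in $V$ and $\QQ$ is ccc, this value stays uncountable in $V[H]$. Density is an invariant of the Boolean completion, so $\RO(R_H)$ cannot be isomorphic to the Cohen algebra. The same invariant rules out Cohen-equivalence below any nonzero element, provided the density lower bound in Theorem~\ref{thm:density} relativizes to every cone. I expect it does: the diagonal tree $S_f$ can be intersected with any quotient condition $T$, replacing its root by a cone above the stem of $T$, and the argument is unchanged. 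I would state this relativization explicitly, because ``Cohen-equivalent'' might be read locally.

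\emph{Not $\omega^\omega$-bounding.} First I define the maps $\alpha_k:\PP_\square\to\PP_{K_k}$ announced after Theorem~\ref{thm:phase}. Invert the bijection $\theta$ from the proof of that theorem, with $\nu=\mathrm{id}$, to write a grid symbol as $(m,b,\ell)$ with $\ell\in\omega$. Then set $\alpha_k(m,b,\ell)=(m,b,\ell\bmod 2^k)$ and apply this coordinatewise to trees. This map sends $E_N$ onto $E_N^{K_k}$, and the preimage of $E_N^{K_k}$ contains $E_N$. Hence \eqref{eq:S} and \eqref{eq:P} hold, and Theorem~\ref{thm:projection} makes $\alpha_k$ a complete projection. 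The diagram commutes because $\rho_{K_k}\circ\alpha_k=\rho_\square$ on symbols. Passing to quotients over $H$, $\alpha_k$ induces a complete projection $R_H\to\PP_{K_k}/H$. By Theorem~\ref{thm:phase} the target has completion $\RO(K_k^{<\omega})$, which is Cohen for $k\ge1$. Concretely, the generic offset residues $n\mapsto d(n)\bmod 2^k$ code a Cohen real over $V[H]$.

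The key step is to transfer the Cohen real back to $R_H$. A complete projection induces an embedding of Boolean completions. So $R_H$ adds a Cohen real over $V[H]$, and a Cohen real is never dominated by a ground function, since enumerating it gives an unbounded real. Hence the quotient is not $\omega^\omega$-bounding.

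The main obstacle is checking that the residue readout of the offset, with the admissibility constraint $d(n)>0$ when $b(n)=1$, matches the $K_k$-label exactly. This is why I pass through $\theta^{-1}$ rather than using $d(n)$ directly: in those coordinates the label $\ell=\nu(k)$ is free in $\omega$, and its residue modulo $2^k$ is the hidden label.
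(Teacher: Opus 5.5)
Your proof is correct and follows the paper's route: $\dens(\RO(R_H))=\mathfrak d^V$ from Theorem~\ref{thm:density} is set against the countable density of the Cohen algebra, and then the residue map $\alpha_k$, checked through \eqref{eq:S}, \eqref{eq:P} and Theorem~\ref{thm:projection}, is a complete projection commuting with the projections to $\QQ$, whose quotient factor is Cohen by Theorem~\ref{thm:phase}. The differences are inessential. The cone relativization is not needed, since global density already rules out an isomorphism of completions. The detour through $\theta^{-1}$ is also unnecessary (your label is $(d-b)\bmod 2^k$ rather than $d\bmod 2^k$, which is harmless because $b\in V[H]$): positive offsets already realize every residue modulo $2^k$, so the paper's direct map $(i,j)\mapsto(\min\{i,j\},\mathbf1_{i>j},|i-j|\bmod 2^k)$ satisfies $\alpha_k[E_N]=E_N^{K_k}$.
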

\begin{proof}
The Cohen algebra has a countable dense set, whereas
$\mathfrak d^V>\aleph_0$. For fixed $k\ge1$, put
$K_k=\{0,\ldots,2^k-1\}$ and define
\[
 \alpha_k(i,j)=
 \bigl(\min\{i,j\},\mathbf1_{i>j},|i-j|\bmod 2^k\bigr).
\]
For every $N$,
$\alpha_k[E_N]=E_N^{K_k}$ and
$\alpha_k^{-1}(E_N^{K_k})=E_N$.
The proof of Theorem~\ref{thm:projection}, applied to these
block relations, makes $\alpha_k:\PP_\square\to\PP_{K_k}$
a complete projection commuting with the projections to
$\QQ$. Thus the residue sequence $d(n)\bmod 2^k$ is the
generic label sequence for the intermediate
$\PP_{K_k}/H$ factor. Theorem~\ref{thm:phase} makes this factor
Cohen. A Cohen real yields an unbounded real by enumerating
the positions of one of its symbols.
\end{proof}

\section{The grid ideal and local covering}

This section gives the grid order's ideal and cardinal structure.
The certificate application in Part~II uses the order, projection,
and game results directly.

Let $X=(\omega^2)^\omega$. Here $\operatorname{Bor}(X)$ means
the Borel sets for the ordinary product topology.
For an ideal $I$ on $X$, $\operatorname{Bor}(X)/I$ denotes the
Boolean algebra of Borel sets modulo symmetric difference in $I$.
The bodies $[T]$, $T\in\PP_\square$,
form a basis for a topology $\tau_\square$. If $g\in[T]\cap[S]$,
cone their intersection at a sufficiently long prefix of $g$.
The successor sets still contain quadrants, so this cone is a
condition. Every ordinary product cylinder is also a basic open
set, and $\tau_\square$ refines the product topology.
Each body $[T]$ is ordinary closed. Put
\[
 \mathcal N_\square=
 \{A\subseteq X:\forall T\in\PP_\square\ \exists S\le T\
                  ([S]\cap A=\varnothing)\},
 \qquad
 \II_\square=\sigma(\mathcal N_\square).
\]
Thus $\mathcal N_\square$ is the family of nowhere-dense sets
for $\tau_\square$ and $\II_\square$ is its meager ideal. The
distinction matters: a countable union of grid-null sets need not
be grid-null.

The presentation below is the grid instance of the filter-Laver
ideal construction \cite[Definition~2.1, Lemma~2.2 and
Theorem~2.8]{khomskii}. We give its direct proof to make the
relationship between the two topologies explicit.

\begin{proposition}[Idealized presentation]\label{prop:idealized-presentation}
No nonempty $\tau_\square$-open set is in $\II_\square$.
The map
\[
 T\longmapsto[T]\bmod\II_\square
\]
preserves order and incompatibility and has dense image in the Boolean
algebra $\operatorname{Bor}(X)/\II_\square$ of ordinary Borel
sets modulo symmetric difference in the ideal. On separative quotients it
is a dense embedding.
\end{proposition}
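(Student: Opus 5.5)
The plan is to prove three things in order: a Baire category theorem for $\tau_\square$, a Borel-approximation step, and then the dense-embedding bookkeeping.

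\textbf{Step 1: Baire category for $\tau_\square$.} This gives the first assertion.
Suppose $A_n\in\mathcal N_\square$ for $n<\omega$, and fix $T\in\PP_\square$. We build a fusion sequence $T=S_0\geq S_1\geq\cdots$ in which stems strictly lengthen and $[S_{n+1}]\cap A_n=\varnothing$.
At stage $n$, apply the definition of $\mathcal N_\square$ to $S_n$ to get $S'\le S_n$ with $[S']\cap A_n=\varnothing$. Then cone $S'$ at a proper one-step extension of its stem to obtain $S_{n+1}$.
The stems converge to a branch $x$. Each $S_n$ contains every finite prefix of $x$ and is closed in the product topology, so $x\in\bigcap_n[S_n]\subseteq[T]\setminus\bigcup_nA_n$. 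Hence $[T]\notin\II_\square$.
The point of taking bodies as the basic open sets is that only stems need to converge, and no quadrant bookkeeping is required.

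\textbf{Step 2: order and incompatibility.} The map $T\mapsto[T]\bmod\II_\square$ is monotone.
Suppose $T\perp S$ and $[T]\cap[S]\ne\varnothing$. Pick $g$ in the intersection. Coning $T\cap S$ at a prefix of $g$ beyond both stems gives a common extension, as in the basis argument preceding the proposition. So $T\perp S$ forces $[T]\cap[S]=\varnothing$, and the images are incompatible.

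\textbf{Step 3: every Borel set is $\tau_\square$-Baire.} This is the main obstacle.
Let $\mathcal B$ be the family of $B\subseteq X$ such that $B\triangle O\in\II_\square$ for some $\tau_\square$-open $O$.
\begin{enumerate}
\item $\mathcal B$ contains every ordinary open set, since these are $\tau_\square$-open.
\item $\mathcal B$ is closed under countable unions, because $\II_\square$ is a $\sigma$-ideal.
\item $\mathcal B$ is closed under complements. If $O$ is $\tau_\square$-open, then $\overline{O}\setminus O$ (closure in $\tau_\square$) is $\tau_\square$-nowhere dense, i.e.\ lies in $\mathcal N_\square$. So $X\setminus O$ differs from the $\tau_\square$-open set $X\setminus\overline O$ by a set in $\mathcal N_\square$.
\end{enumerate}
Thus $\mathcal B\supseteq\operatorname{Bor}(X)$.

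Density then follows. Take a Borel $B\notin\II_\square$ and an open $O$ with $B\triangle O\in\II_\square$. The set $O$ is nonempty, so it contains some basic $[T]$, and $[T]\setminus B\subseteq B\triangle O\in\II_\square$. The one point to check is that $\tau_\square$-open sets are unions of bodies, which is immediate from the basis property.

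\textbf{Step 4: dense embedding on separative quotients.} It remains to show that $T\le_{\mathrm{sep}}U$ iff $[T]\setminus[U]\in\II_\square$.
\begin{enumerate}
\item Suppose $[T]\setminus[U]\notin\II_\square$. This set is Borel, since bodies are ordinary closed. By density it contains some $[S]$ modulo $\II_\square$. By Step 1, $[S]\cap[T]$ is not in the ideal, so we may take $S\le T$. Then $[S]\cap[U]\in\II_\square$ is a $\tau_\square$-open set in the ideal, hence empty by Step 1. So $S\perp U$, and $T\not\le_{\mathrm{sep}}U$.
\item Conversely, suppose $T\not\le_{\mathrm{sep}}U$, witnessed by $S\le T$ with $S\perp U$. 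Step 2 gives $[S]\cap[U]=\varnothing$. So $[S]\subseteq[T]\setminus[U]$, and Step 1 shows $[T]\setminus[U]\notin\II_\square$.
\end{enumerate}
Hence the induced map on separative quotients is an order embedding with dense image, i.e.\ a dense embedding.
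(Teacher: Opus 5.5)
Your proof is correct and follows essentially the same route as the paper: a nested-stem Baire category argument for the first assertion, and the standard $\sigma$-algebra argument that ordinary Borel sets have the $\tau_\square$-Baire property, which gives density. You then prove the equivalence $T\le_{\mathrm{sep}}U\iff[T]\setminus[U]\in\II_\square$ via coning and the disjointness of bodies of incompatible trees; the only cosmetic difference is that you get $S\perp U$ by applying Step~1 to the $\tau_\square$-open set $[S]\cap[U]$, whereas the paper cites its compatibility equivalence, which amounts to the same observation.
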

\begin{proof}
To prove the first assertion, suppose $A_n$ is nowhere dense for each $n$.
Starting with $T_0=T$, recursively choose
$T_{n+1}\le T_n$ whose body misses $A_n$, and extend its stem to length
at least $n+1$. The nested stems determine a branch belonging to every
$[T_n]$: any finite prefix of the union belongs to all earlier trees.
This branch lies in $[T]$ and outside $\bigcup_n A_n$. Notice that the
intersection of the trees need not itself be a condition; a branch is all
that this argument requires.

Every ordinary Borel set has the Baire property for $\tau_\square$.
Indeed ordinary open sets are $\tau_\square$-open, and in any topology the
sets differing from an open set by a meager set form a $\sigma$-algebra:
countable unions preserve the property, and the complement of an open set
differs from its interior by a closed nowhere-dense boundary. If a Borel
set $B$ is not in $\II_\square$, write $B\mathbin\triangle O$
meager with $O$ open. Then $O$ is nonempty; select $[T]\subseteq O$.
It follows that $[T]\setminus B$ is meager and $[T]$ is not, so the image
of $T$ is nonzero and below the class of $B$. This proves density.

Compatible trees have an intersection containing the body of a condition,
which is nonmeager by the first assertion. Conversely a branch common to two
trees yields a condition by coning their intersection beyond both stems.
Thus incompatible trees have disjoint bodies. Finally, if
$T\not\le_{\rm sep}S$, some $U\le T$ is incompatible with $S$;
its nonzero image witnesses that the image of $T$ is not below that of $S$.
Conversely, if the image of $T$ is not below that of $S$, density supplies
$U$ whose nonzero image lies below $[T]\setminus[S]$ modulo the ideal.
It is compatible with $T$ and incompatible with $S$, by the preceding
compatibility equivalence. A common strengthening of $U$ and $T$ therefore
witnesses $T\not\le_{\rm sep}S$.
\end{proof}

\begin{theorem}[Ideal characteristics]
\label{thm:ideal}
\begin{align*}
 \add(\II_\square)&=\omega_1,&
 \cof(\II_\square)&=\mathfrak c,\\
 \mathfrak p&\le\cov(\II_\square)
             \le\add(\MM),&
 \cof(\MM)&\le\non(\II_\square)\le\mathfrak c,\\
 \dens(X,\tau_\square)&=\pw(X,\tau_\square)
              =\dens(\PP_\square)=\mathfrak d.
\end{align*}
The first-line equalities follow from Khomskii's filter-Laver
results \cite[Remark~5.12]{khomskii}, since $\FF_\square$ is
$F_\sigma$. Proposition~\ref{prop:idealized-presentation} supplies
the associated dense embedding independently.
\end{theorem}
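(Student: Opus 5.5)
I will prove the statement line by line, taking the first line from Khomskii's results as the statement permits. Proposition~\ref{prop:idealized-presentation} already reduces $\II_\square$-questions to questions about the forcing $\PP_\square$ and its dense sets.

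\emph{Density line.} The proof of Theorem~\ref{thm:density} constructs $\mathfrak d$ pure-quadrant trees $T_{s,F}$ that are dense in $\PP_\square$, so $\dens(\PP_\square)\le\mathfrak d$. Their bodies form a $\pi$-base for $\tau_\square$, hence $\pw(X,\tau_\square)\le\mathfrak d$. Choosing one branch from each body gives a dense set, hence $\dens(X,\tau_\square)\le\mathfrak d$.

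For the lower bounds I will use a single diagonal argument. Take a set $D\subseteq X$ with $|D|<\mathfrak d$. Define $f$ so that it is not dominated by the functions $k\mapsto g(k)$ coming from coordinate data of points of $D$. Then build a tree like $S_f$ that requires $\min(v(n+1))\ge f(\cdot)$ along its nodes. I expect every $g\in D$ to eventually leave $[S_f]$; if necessary I will replace $S_f$ by a cone. This shows $\dens(X,\tau_\square)\ge\mathfrak d$. For $\pw$ and $\dens(\PP_\square)$ the lower bound is the same as in the proof of Theorem~\ref{thm:density}, taking $H$ trivial: fewer than $\mathfrak d$ trees cannot all fail to be separatively below $S_f$. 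Finally, a $\pi$-base yields a dense set by picking points, so $\dens\le\pw$ and the three cardinals coincide.

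\emph{Covering line.} For $\mathfrak p\le\cov(\II_\square)$, note that $\PP_\square$ is $\sigma$-centered. A covering family of size $<\mathfrak p$ gives fewer than $\mathfrak p$ dense open sets, namely the trees whose bodies miss a given nowhere-dense set. By Bell's theorem there is a filter meeting them all, and its generic branch avoids every set in the family.

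For $\cov(\II_\square)\le\add(\MM)$ I will use $\add(\MM)=\min(\mathfrak b,\cov(\MM))$ and prove both $\cov(\II_\square)\le\mathfrak b$ and $\cov(\II_\square)\le\cov(\MM)$.
\begin{enumerate}
\item For $\mathfrak b$: take an unbounded family $\{f_\alpha\}$. Let $A_\alpha$ be the set of branches whose minimum coordinates are not eventually above $f_\alpha$ at infinitely many positions. This set is grid-nowhere-dense, or at least $\II_\square$, because pruning quadrants forces domination. Unboundedness is what makes the $A_\alpha$ cover $X$, and I will adjust the sets so that they do.
\item For $\cov(\MM)$: pull back a meager covering of $2^\omega$ through the orientation map $g\mapsto b$. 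The orientation map is continuous and open from $\tau_\square$, by the Cohen-factor analysis in Theorem~\ref{thm:phase}, so preimages of meager sets are $\II_\square$.
\end{enumerate}
I expect the $\mathfrak b$ bound to be the main obstacle: the sets $A_\alpha$ must actually lie in $\II_\square$ while still covering.

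\emph{Uniformity line.} The bound $\non(\II_\square)\le\mathfrak c$ is trivial. For $\cof(\MM)\le\non$, I will take a nonmeager set $Z$. By the dual arguments, $\min$-coordinates of $Z$ form a dominating family, so $|Z|\ge\mathfrak d$. Applying Proposition~\ref{prop:idealized-presentation}, the orientation image of $Z$ is nonmeager, so $|Z|\ge\non(\MM)$. Combining these gives $|Z|\ge\max(\mathfrak d,\non(\MM))=\cof(\MM)$.
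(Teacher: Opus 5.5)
Your proposal is correct and follows essentially the paper's own proof. The sets $Z_f=\{g:f\not\le^* m_g\}$ are your $A_\alpha$, and they need no adjustment: raising successor quadrants above $f(n)$ makes each one grid-nowhere-dense, and an unbounded family covers $X$ outright, so these sets give $\cov(\II_\square)\le\mathfrak b$ and $\non(\II_\square)\ge\mathfrak d$; the orientation map, Cicho\'n's identities, Bell's theorem and the pure-quadrant trees then finish the remaining lines exactly as you describe. For $\dens(X,\tau_\square)\ge\mathfrak d$ take the pure tree with successor set $E_{f(n)}$ at level $n$, not the offset-indexed $S_f$ of Theorem~\ref{thm:density}; and since $\dens(\PP_\square)\ge\pw(X,\tau_\square)\ge\dens(X,\tau_\square)$, the separative diagonal you invoke is unnecessary.
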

\begin{proof}
The filter $\FF_\square$ is $F_\sigma$, so the cited filter-Laver
results give the additivity and cofinality equalities.
We prove the remaining bounds.
For $g\in X$ let $m_g(n)=\min(g(n)_0,g(n)_1)$ and, for
$f\in\omega^\omega$, put
$Z_f=\{g:f\not\le^*m_g\}$. Each $Z_f$ is grid-null:
below a condition, raise its successor quadrant at level $n$
above $f(n)$. An unbounded family of $f$'s covers $X$ by the
corresponding $Z_f$, so $\cov(\II_\square)\le\mathfrak b$.
If $|Y|<\mathfrak d$, choose $f$ not eventually dominated by
any $m_g$, $g\in Y$; then $Y\subseteq Z_f$, proving
$\non(\II_\square)\ge\mathfrak d$.

The orientation map $\beta:X\to2^\omega$ sends $g$ to
$\mathbf1_{g(n)_0>g(n)_1}$ coordinatewise. The preimage of an
ordinary nowhere-dense subset of $2^\omega$ is grid-null:
extend the stem to an orientation cylinder missing that set.
Consequently $\cov(\II_\square)\le\cov(\MM)$ and
$\non(\II_\square)\ge\non(\MM)$. Standard Cicho\'n
identities give
$\min\{\mathfrak b,\cov(\MM)\}=\add(\MM)$ and
$\max\{\mathfrak d,\non(\MM)\}=\cof(\MM)$.

The order $\PP_\square$ is $\sigma$-centered. Bell's theorem
\cite{bell} supplies a filter meeting fewer than $\mathfrak p$
dense sets. For fewer than $\mathfrak p$ grid-meager sets,
decompose each into countably many grid-null sets. Meet the
corresponding dense avoidance sets and all dense sets requiring
stem length at least $n$. The directed stems give a branch
outside every set. Hence $\mathfrak p\le\cov(\II_\square)$.

The pure-quadrant trees used in Theorem~\ref{thm:density} form a
$\pi$-base of size $\mathfrak d$. Selecting one point from each
gives a dense set of that size. Conversely, if $Y\subseteq X$
has size below $\mathfrak d$, choose $f$ not eventually dominated
by any $m_g$, $g\in Y$. The condition requiring
$m_g(n)\ge f(n)$ at every stage misses $Y$. Thus both topological
cardinals are at least $\mathfrak d$, and the equalities follow.
\end{proof}

The upper bound on $\cov(\II_\square)$ is exact. We include the
reduction to Hru\v{s}\'ak--Minami's Martin-number theorem
\cite{hrusakminami}, distinguishing the ideal on the \emph{alphabet}
from $\II_\square$ on the \emph{branch space}. Put
$B_N=\omega^2\setminus E_N$ and
\[
 J=\{A\subseteq\omega^2:\exists N\ A\subseteq B_N\}.
\]
Then its dual filter
$J^*=\{A\subseteq\omega^2:\omega^2\setminus A\in J\}$
is $\FF_\square$; write
$J^+=\mathcal P(\omega^2)\setminus J$ for the $J$-positive
sets. Let $\operatorname{sep}(J)$ be the least value of
$|\mathcal G|+|\mathcal H|$ for families $\mathcal G\subseteq J$ and
$\mathcal H\subseteq J^+$ for which no $A\subseteq\omega^2$
satisfies $|A\cap G|<\aleph_0$ for every $G\in\mathcal G$ and
$|A\cap H|=\aleph_0$ for every $H\in\mathcal H$.

\begin{lemma}\label{lem:separation}
$\operatorname{sep}(J)\ge\cov(\MM)$.
\end{lemma}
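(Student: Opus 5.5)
The plan is to build a single set $A$ that is almost disjoint from \emph{every} member of $J$ simultaneously, so that the family $\mathcal G$ plays no role. The positivity requirement will then be handled by a Baire-category argument in a copy of Baire space. So fix $\mathcal G\subseteq J$ and $\mathcal H\subseteq J^+$ with $|\mathcal G|+|\mathcal H|<\cov(\MM)$. We must produce $A\subseteq\omega^2$ with $|A\cap G|<\aleph_0$ for $G\in\mathcal G$ and $|A\cap H|=\aleph_0$ for $H\in\mathcal H$.

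\emph{Step 1 (shape of $A$).} Look for $A$ of the form $A=\{x(n):n<\omega\}$, where $x\in Z=\prod_{n<\omega}E_n$ and each $E_n=[n,\infty)^2$ carries the discrete topology. Since each $E_n$ is countably infinite, $Z$ is homeomorphic to $\omega^\omega$. By the standard equality recalled in the Notation, the covering number of its meager ideal is $\cov(\MM)$. For any such $x$ and any $N$, we have $A\cap B_N\subseteq\{x(n):n<N\}$, because $x(n)\in E_n\subseteq E_N$ for $n\ge N$. Hence $A\cap B_N$ is finite for every $N$. Every $G\in J$ lies in some $B_N$, so $A\cap G$ is finite for all $G\in\mathcal G$, whatever $x$ is.

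\emph{Step 2 (positivity as density).} A set $H$ is in $J^+$ exactly when $H\not\subseteq B_N$ for every $N$, that is, when $H\cap E_N\neq\varnothing$ for all $N$. For $H\in\mathcal H$ and $k<\omega$, put
\[
 D_{H,k}=\{x\in Z:\exists n\ge k\ \ x(n)\in H\}.
\]
This set is open in $Z$. It is also dense: any basic cylinder fixes only finitely many coordinates, and we may choose a later coordinate $n\ge k$ and set $x(n)\in H\cap E_n\neq\varnothing$. If $x$ lies in every $D_{H,k}$, then $x(n)\in H$ for infinitely many $n$. Since $x(n)\in E_n$, the minimum coordinate of $x(n)$ is at least $n$, so these values $x(n)$ are pairwise distinct along a suitable subsequence. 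Therefore $A\cap H$ is infinite.

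\emph{Step 3 (category).} There are at most $|\mathcal H|\cdot\aleph_0$ sets $D_{H,k}$. This is below $\cov(\MM)$ because $|\mathcal H|<\cov(\MM)$ and $\cov(\MM)\ge\aleph_1$. Their complements are nowhere dense and fewer than $\cov(\MM)$ in number, so they do not cover $Z$. Any $x$ outside all of them gives the required $A$. The degenerate case $\mathcal H=\varnothing$ is covered trivially, for instance by $A=\varnothing$.

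The only point needing care is Step 2. There I must check that infinitely many hits $x(n)\in H$ yield infinitely many \emph{distinct} points, which is where the growth $x(n)\in E_n$ is used a second time. I must also confirm the transfer of $\cov(\MM)$ to the product $\prod_n E_n$ of countable discrete spaces. Both are routine once the product is identified with $\omega^\omega$.
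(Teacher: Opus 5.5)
Your proof is correct and is essentially the paper's argument: a Baire-category argument in a copy of Baire space whose points automatically have range almost disjoint from every $B_N$ (so $\mathcal G$ is irrelevant), with the dense open sets $D_{H,k}$ forcing infinitely many hits in each $H\in\mathcal H$. The only cosmetic difference is that you work in the product $\prod_n E_n$ (minimum coordinate at least $n$), whereas the paper uses sequences with strictly increasing minima; your choice makes the homeomorphism with $\omega^\omega$ immediate, but it requires the small extra observation, which you correctly supply, that each value can occur at only finitely many coordinates.
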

\begin{proof}
Fix $\mathcal G\subseteq J$ and $\mathcal H\subseteq J^+$ with
$|\mathcal G|+|\mathcal H|<\cov(\MM)$.
Let $Y$ be the Polish space of sequences
$y=(a_n)_{n<\omega}\in(\omega^2)^\omega$ with
$\min(a_n)<\min(a_{n+1})$ for all $n$. It is a nonempty closed
subspace of the product of countable discrete spaces. Every finite
increasing sequence has countably infinitely many immediate
extensions, so $Y$ is homeomorphic to Baire space. For
$H\in\mathcal H$ and $k<\omega$, the set
\[
 U_{H,k}=\{y\in Y:\exists n\ge k\ (a_n\in H)\}
\]
is dense open: $H$ has points of arbitrarily large minimum, so
every finite increasing sequence extends through such a point.
Fewer than $\cov(\MM)$ meager subsets cannot cover $Y$.
Hence some $y$ lies in every $U_{H,k}$. Its range $A$ meets each
$H$ infinitely. Since the minima of $a_n$ strictly increase,
$A\cap B_N$ is finite for every $N$, and $A$ meets each
$G\in\mathcal G$ only finitely.
\end{proof}

\begin{lemma}\label{lem:martin-cover}
For the quadrant tree order,
$\mathfrak m(\PP_\square)=\cov(\II_\square)$, where
$\mathfrak m(P)$ is the least number of dense open subsets of $P$
with no common filter.
\end{lemma}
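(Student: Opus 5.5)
We prove the two inequalities $\mathfrak m(\PP_\square)\le\cov(\II_\square)$ and $\cov(\II_\square)\le\mathfrak m(\PP_\square)$ separately. Both rest on the same bridge, the argument already used for $\mathfrak p\le\cov(\II_\square)$ in Theorem~\ref{thm:ideal}: a filter on $\PP_\square$ whose stems are unbounded in length determines a branch $g_G=\bigcup\{\stem(T):T\in G\}$, and $g_G\in[T]$ for every $T\in G$. The reason is that stems of members of a filter are pairwise comparable, and every prefix of $g_G$ is extended by the stem of some member compatible with $T$.

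\emph{First inequality, $\cov(\II_\square)\ge\mathfrak m(\PP_\square)$.} Suppose $\kappa<\mathfrak m(\PP_\square)$ and $\{A_\alpha:\alpha<\kappa\}\subseteq\II_\square$. Decompose each $A_\alpha$ into countably many grid-null sets $A_{\alpha,n}$. Each set
\[
 D_{\alpha,n}=\{S:[S]\cap A_{\alpha,n}=\varnothing\}
\]
is dense and downward closed, hence dense open. Add the countably many dense open sets
\[
 L_k=\{S:|\stem(S)|\ge k\}.
\]
This gives $\max\{\kappa,\aleph_0\}$ dense open sets in total. Since $\PP_\square$ is $\sigma$-centered, $\mathfrak m(\PP_\square)\ge\mathfrak p\ge\aleph_1$, so this number is still below $\mathfrak m(\PP_\square)$. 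A filter $G$ meeting all of them yields $g_G$, which lies in some $[S]$ with $S\in D_{\alpha,n}$ and therefore avoids every $A_{\alpha,n}$. Hence the family does not cover $X$, and $\mathfrak m(\PP_\square)\le\cov(\II_\square)$.

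\emph{Second inequality, $\cov(\II_\square)\le\mathfrak m(\PP_\square)$.} Let $\{D_\alpha:\alpha<\kappa\}$ be dense open sets with no common filter. Since $\mathfrak m(\PP_\square)\ge\aleph_1$, we may assume $\kappa$ is infinite and add the sets $L_k$ without changing $\kappa$. For each $\alpha$ put
\[
 A_\alpha=X\setminus\bigcup_{S\in D_\alpha}[S].
\]
This is $\tau_\square$-closed, and it is grid-null because every $T$ has some $S\le T$ in $D_\alpha$ with $[S]\cap A_\alpha=\varnothing$.

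We must show $\bigcup_\alpha A_\alpha=X$. Suppose instead that some $g$ avoids every $A_\alpha$. Let $G_g$ be the set of conditions $T$ such that some cone of $T$ containing $g$ lies inside $T$; concretely, $g\in[T]$ and $T$ contains a condition $T\restriction u$ with $u\subseteq g$. The intersection argument of Proposition~\ref{prop:idealized-presentation} shows $G_g$ is a filter: two conditions whose bodies contain $g$ have the cone of their intersection at a long prefix of $g$ as a common extension, and that cone again contains $g$. Indeed, the conditions $T$ with $g\in[T]$ form a filter outright, since upward closure is immediate and any two such conditions have a common extension containing $g$. Finally, $g\notin A_\alpha$ gives some $S\in D_\alpha$ with $g\in[S]$, so this filter meets every $D_\alpha$. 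That contradicts the choice of the $D_\alpha$. Hence $\cov(\II_\square)\le\kappa$.

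\emph{Main obstacle.} The step needing most care is verifying that $\{T:g\in[T]\}$ is a filter for an arbitrary $g\in X$. Compatibility of two such trees is exactly the coning fact stated when $\tau_\square$ was defined, namely that the bodies $[T]$ form a basis: intersecting at a prefix of $g$ longer than both stems preserves quadrant successor sets. The only remaining subtlety is that $\mathfrak m$ counts dense open sets rather than dense sets. Downward closure of each $D_{\alpha,n}$ handles the first inequality, and the second uses only density together with membership.
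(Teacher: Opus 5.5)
Your proposal is correct and is essentially the paper's proof: one direction converts each dense open $D$ into the grid-null set $X\setminus\bigcup_{S\in D}[S]$ and uses the filter of trees containing a branch that avoids these sets; the other converts a grid-meager cover into grid-null avoidance sets plus the stem-length sets, so that a common filter would give a branch outside the cover. Two minor remarks: the sets $L_k$ are unnecessary in your second half, and $\mathfrak m(\PP_\square)\ge\aleph_1$ already follows from the Rasiowa--Sikorski lemma, without appealing to $\sigma$-centeredness or $\mathfrak p$.
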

\begin{proof}
For each dense open $D\subseteq\PP_\square$,
$O_D=\bigcup_{T\in D}[T]$ is $\tau_\square$-open dense, so
$X\setminus O_D\in\mathcal N_\square$. If fewer than
$\cov(\II_\square)$ dense sets are given, choose a branch
outside all their complements. Trees containing that branch form
a directed family: for any two, cone their intersection beyond both
stems. Its upward closure is a filter meeting each dense set.
Thus $\mathfrak m(\PP_\square)\ge\cov(\II_\square)$.
Conversely, cover $X$ by $\cov(\II_\square)$ members of
$\II_\square$ and decompose them into grid-null sets. For each
null set $N$ take the dense open set of trees whose bodies miss
$N$, and add the countably many dense sets requiring stem length
at least $n$. A filter meeting all these sets would have a
unique branch outside the cover. No such filter exists.
\end{proof}

\begin{corollary}[Exact covering number]\label{cor:exact-cover}
$\cov(\II_\square)=\add(\MM)$.
\end{corollary}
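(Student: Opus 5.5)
The upper bound $\cov(\II_\square)\le\add(\MM)$ is already part of Theorem~\ref{thm:ideal}. Indeed, $\cov(\II_\square)\le\mathfrak b$ and $\cov(\II_\square)\le\cov(\MM)$, and $\min\{\mathfrak b,\cov(\MM)\}=\add(\MM)$. So the work is the lower bound. By Lemma~\ref{lem:martin-cover} it suffices to prove $\mathfrak m(\PP_\square)\ge\add(\MM)$. I would obtain this from the Hru\v{s}\'ak--Minami theorem \cite{hrusakminami}. For a filter-Laver (Laver-type) forcing $L_{\mathcal F}$ with $\mathcal F$ the dual of an ideal $J$, that theorem computes the Martin number in the form
\[
 \mathfrak m(L_{J^*})=\min\{\mathfrak b,\operatorname{sep}(J)\}.
\]
Here $\operatorname{sep}(J)$ is the separation number defined just before Lemma~\ref{lem:separation}, and the theorem applies when $J$ is not too simple; the $F_\sigma$ case is the relevant one here. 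Since $J^*=\FF_\square$ and $L_{\FF_\square}(\omega^2)=\PP_\square$, this gives $\mathfrak m(\PP_\square)=\min\{\mathfrak b,\operatorname{sep}(J)\}$.

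Lemma~\ref{lem:separation} gives $\operatorname{sep}(J)\ge\cov(\MM)$. Hence
\[
 \cov(\II_\square)=\mathfrak m(\PP_\square)
 \ge\min\{\mathfrak b,\cov(\MM)\}=\add(\MM).
\]
Combined with the upper bound, this proves equality. I would also note that $\operatorname{sep}(J)\le\cov(\MM)$ is not needed, because the upper bound comes from the direct covering arguments.

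The main obstacle is checking that the hypotheses of the cited Martin-number theorem are met in the exact form used. First, the theorem is stated for ideals on $\omega$, so I would transport $J$ along a bijection $\omega^2\cong\omega$. Second, the formula I recall may carry side conditions, namely that $J$ is a proper, non-maximal, tall-type or $F_\sigma$ ideal, and possibly that the forcing is the Laver rather than the Mathias variant. I expect these to be harmless: $J$ is generated by countably many sets $B_N$ and contains all finite sets, as Lemma~\ref{lem:rank} shows for the filter side. If the citation only yields $\mathfrak m\ge\min\{\mathfrak b,\operatorname{sep}(J)\}$, that inequality is all that is required.

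If the citation does not apply directly, I would prove $\mathfrak m(\PP_\square)\ge\add(\MM)$ by hand. Given $\kappa<\add(\MM)$ dense open sets $D_\alpha$, the plan is a rank/fusion argument. For each $D_\alpha$ and each stem there is a canonical rank function. Using $\kappa<\mathfrak b$, I would choose a single threshold function that dominates the quadrant thresholds needed at all nodes. Then, using $\kappa<\cov(\MM)$ through Lemma~\ref{lem:separation}, I would choose a set $A$ that meets each relevant positive set infinitely and each $B_N$ finitely. These two choices build a centered family meeting every $D_\alpha$. This fallback is the genuinely hard step, and I would attempt it only if the citation is insufficient.
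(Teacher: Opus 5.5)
Your argument follows the paper's route. Lemma~\ref{lem:martin-cover} turns $\cov(\II_\square)$ into $\mathfrak m(\PP_\square)$, the Hru\v{s}\'ak--Minami theorem \cite{hrusakminami} evaluates that Martin number, and Lemma~\ref{lem:separation} disposes of the separation term. Two corrections are needed, though neither changes the conclusion.

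\textbf{The cited formula.} The statement the paper invokes is $\mathfrak m(L_{J^*})=\min\{\add(\MM),\operatorname{sep}(J)\}$, not $\min\{\mathfrak b,\operatorname{sep}(J)\}$. This is why $\add(\MM)$ is the cap: for non-maximal $J$ the forcing adds both a dominating real and a Cohen real, so its Martin number never exceeds $\add(\MM)$. Your lower bound survives the correction, since $\operatorname{sep}(J)\ge\cov(\MM)\ge\add(\MM)$. In fact the correct form gives the whole equality $\mathfrak m(\PP_\square)=\add(\MM)$ in one step, so your separate appeal to Theorem~\ref{thm:ideal} for the upper bound is valid but redundant.

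\textbf{The hypothesis.} The theorem's only hypothesis is that $J^*=\FF_\square$ is not an ultrafilter. You should check this rather than list guessed side conditions. The sets $\{(i,j):i>j\}$ and $\{(i,j):i\le j\}$ are complementary and both meet every $E_N$, so both are $J$-positive. Equivalently, a countably generated free filter is never an ultrafilter, which is what your remark about the $B_N$ implicitly gives. The check matters because some of your guessed conditions would not have been harmless. For instance, $J$ is not tall, since no infinite subset of the diagonal lies in $J$. With the hypothesis verified, the fallback fusion argument is not needed.
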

\begin{proof}
Hru\v{s}\'ak--Minami (Theorem~1.3 of the linked author
preprint \cite{hrusakminami}) prove
$\mathfrak m(L_{J^*})=
\min\{\add(\MM),\operatorname{sep}(J)\}$ whenever $J^*$
is not an ultrafilter. Our $J^*=\FF_\square$ is
not an ultrafilter: the two orientations partition every
quadrant into two $J$-positive sets. Apply the preceding lemmas
and $\add(\MM)\le\cov(\MM)$.
\end{proof}

For an ideal $I$ on a Polish space, define its local covering number
without relying on a variant notation from the literature:
\[
 \cov^{\mathrm{loc}}(I)=
 \min\{|\mathcal A|:\mathcal A\subseteq I,\ \exists B\in
            \operatorname{Bor}(X)\setminus I\ 
            (B\subseteq\bigcup\mathcal A)\}.
\]
For Borel positive $B$, $\cov(I\restriction B)$ is the least
size of a family of restrictions of $I$-sets covering $B$.

\begin{theorem}[Local covering homogeneity]\label{thm:local}
For every Borel $\II_\square$-positive $B\subseteq X$,
\[
 \cov(\II_\square\restriction B)
 =\cov^{\mathrm{loc}}(\II_\square)
 =\cov(\II_\square).
\]
\end{theorem}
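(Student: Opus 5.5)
Three inequalities will be proved. Together they squeeze the three quantities into equality. Fix a Borel $\II_\square$-positive $B$.

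First, $\cov^{\mathrm{loc}}(\II_\square)\le\cov(\II_\square\restriction B)$. A family of restrictions $A\cap B$, $A\in\II_\square$, that covers $B$ also gives a subfamily of $\II_\square$ (the sets $A\cap B$ themselves) whose union contains the positive Borel set $B$. Second, $\cov^{\mathrm{loc}}(\II_\square)\le\cov(\II_\square)$. This is immediate by taking $B=X$, which is positive by the first assertion of Proposition~\ref{prop:idealized-presentation}. Third, $\cov(\II_\square\restriction B)\le\cov(\II_\square)$. By Proposition~\ref{prop:idealized-presentation}, $B$ contains a body $[T]$ modulo $\II_\square$, so $[T]\setminus B\in\II_\square$. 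A cover of $X$ restricted to $B$ covers $B$. (Only the trivial direction is needed here.)

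The substantive inequality is $\cov(\II_\square)\le\cov^{\mathrm{loc}}(\II_\square)$ and, more sharply, $\cov(\II_\square)\le\cov(\II_\square\restriction B)$. The plan is to use homogeneity of the grid order. Suppose $\mathcal A\subseteq\II_\square$ covers a positive Borel $B$ with $|\mathcal A|=\kappa$. By density, pick $T$ with $[T]\setminus B\in\II_\square$. Adding this one set, we obtain $\kappa+1$ ideal sets covering $[T]$, and may assume $\kappa$ is infinite since $\cov\ge\mathfrak p\ge\omega_1$. Now construct a homeomorphism-type map $\Phi:X\to[T]$ that sends grid-null sets to grid-null sets under preimage. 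After the stem $\sigma=\stem(T)$, fix at each node $t\supseteq\sigma$ of $T$ a threshold $N_T(t)$ with $E_{N_T(t)}\subseteq\Succ_T(t)$. Define $\Phi(g)=\sigma^\frown\langle\phi_n(g(n))\rangle$, where at each level we use the order isomorphism $\omega^2\to E_{N}$ given by $(i,j)\mapsto(i+N,j+N)$, with $N=N_T$ of the current image node. This map is injective and continuous, and it carries quadrants $E_M$ onto quadrants $E_{M+N}$. Hence it induces an order isomorphism between $\PP_\square$ and the dense set of conditions below $T$ living inside $\Phi[X]$ (pure-translate trees). Preimages $\Phi^{-1}[A]$ of grid-null sets are then grid-null: given a condition $S$, its image condition $\Phi[S]$ extends to some $S'$ missing $A$, and the set $\Phi^{-1}[S'\cap\Phi[X]]$ contains a body that misses $\Phi^{-1}[A]$. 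Here one must shrink $S'$ to a subtree of $\Phi$'s range; since $\Phi$'s range at each node is a full quadrant $E_N$, intersecting with it keeps successor sets quadrant-containing. By $\sigma$-additivity, preimages of sets in $\II_\square$ lie in $\II_\square$. So $\{\Phi^{-1}[A]:A\in\mathcal A\}\cup\{\Phi^{-1}[[T]\setminus B]\}$ covers $X$, giving $\cov(\II_\square)\le\kappa$.

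The main obstacle is the verification that $\Phi$ transports nowhere density. The subtle point is that a condition below $\Phi[S]$ need not be contained in $\Phi[X]$, since thresholds vary with the node. I would handle this by intersecting with the range, which is itself a condition below $T$ because at each node the range contains exactly the quadrant $E_{N_T(t)}$. An alternative route, if this bookkeeping fails, is to reprove Lemma~\ref{lem:martin-cover} below $T$: the cone $\PP_\square\downarrow T$ has a dense subset isomorphic to $\PP_\square$. Hence $\mathfrak m(\PP_\square\downarrow T)=\mathfrak m(\PP_\square)$, and the argument of that lemma localized to $[T]$ shows that a cover of $[T]$ by $\kappa$ ideal sets yields $\kappa$ dense sets below $T$ with no common filter. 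Combined with Corollary~\ref{cor:exact-cover}, every local value then equals $\add(\MM)$.
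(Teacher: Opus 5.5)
Your proposal is correct and follows essentially the paper's route: pass to a pure-quadrant tree $T'\le T$, use the translation map $\Phi:X\to[T']$ (the paper's $\Phi_S$) to pull the given ideal sets together with $[T]\setminus B$ back to a cover of $X$ by members of $\II_\square$, and get the reverse inequality by restricting a cover of $X$ to $B$. Two small remarks. The ``subtle point'' you flag is not an issue, since any $S'\le\Phi[S]$ already satisfies $S'\subseteq\Phi[S]\subseteq T'$ and so lies in the range. The extra set $[T]\setminus B$ is absorbed either by your bound $\cov(\II_\square)\ge\omega_1$ or, as in the paper, because $\kappa$ is uncountable by positivity of $B$.
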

\begin{proof}
Pure-quadrant trees are dense. For such a tree $S$ with stem $s$
and threshold map $N(t)$, define $\Phi_S:X\to[S]$ recursively:
write $s$ first and, after an output prefix $t\supseteq s$,
send the next input pair $(i,j)$ to $(N(t)+i,N(t)+j)$.
This is an ordinary homeomorphism and also a homeomorphism
between the grid topology on $X$ and its relative topology on
$[S]$: quadrant successor sets translate to quadrant successor
sets in both directions. The body $[S]$ is grid-open, so its
subspace meager ideal is the restriction of $\II_\square$. Hence
$\cov(\II_\square\restriction[S])=\cov(\II_\square)$.

For Borel positive $B$, the dense idealized presentation gives
$T$ with $[T]\setminus B\in\II_\square$. Choose pure $S\le T$.
If $\kappa$ ideal sets cover $B$, then $\kappa$ is uncountable
because $B$ is ideal-positive and $\II_\square$ is a sigma-ideal.
Their restrictions together with $[S]\setminus B$ cover $[S]$
using $\kappa$ sets, so
$\kappa\ge\cov(\II_\square)$. The reverse inequality follows
by restricting a cover of $X$ to $B$. Taking the minimum over
Borel positive $B$ proves the middle equality.
\end{proof}

\section{Threshold games and strategy transfer}\label{sec:games}

The tail filter gives a game with the same local quantifiers as a
forcing condition. Set $X_\square=(\omega^2)^\omega$.
Fix a finite grid stem $s$ and $X\subseteq X_\square$.
In $G_s(X)$, Builder announces $N_r<\omega$ at
round $r$, and Selector answers $a_r\in E_{N_r}$. Selector wins when
$s^\frown(a_r)_{r<\omega}\in X$. A \emph{pure-quadrant tree} has
successor set exactly $E_{N_t}$ at every node above its stem. A
\emph{positive tree} has successor set meeting every $E_N$ at every
node above its stem. Positive trees need not be conditions of
$\PP_\square$; the diagonal $\{(i,i):i<\omega\}$ is positive and
contains no quadrant. These are the quadrant versions of the
Hechler/Laver game trees in \cite[Proposition~13]{miller}; the filter formulation is
related to \cite{khomskii}.

\begin{theorem}[Exact tree reading of the game]\label{thm:game-trees}
For every $X\subseteq X_\square$ and finite $s$:
\begin{enumerate}
\item Builder wins $G_s(X)$ if and only if a pure-quadrant tree $T$
with stem $s$ satisfies $[T]\cap X=\varnothing$.
\item Selector wins $G_s(X)$ if and only if a positive tree $L$ with
stem $s$ satisfies $[L]\subseteq X$.
\end{enumerate}
For Borel $X$, exactly one alternative occurs.
\end{theorem}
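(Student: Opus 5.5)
The plan is to prove the two tree characterizations by converting strategies into trees and back, and then to obtain the dichotomy for Borel $X$ from Borel determinacy.

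\textbf{Part 1, direction ($\Leftarrow$).} Suppose $T$ is pure-quadrant with stem $s$ and thresholds $N_t$. Builder plays $N_r=N_t$, where $t=s^\frown(a_0,\dots,a_{r-1})$ is the current node. An induction on $r$ shows that $t\in T$: Selector's answer lies in $E_{N_t}=\Succ_T(t)$. So every play yields a branch of $[T]$, and this branch avoids $X$.

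\textbf{Part 1, direction ($\Rightarrow$).} Fix a winning strategy $\sigma$ for Builder. Define $T$ to consist of the nodes below $s$ together with the strings $s^\frown(a_0,\dots,a_{r-1})$ such that each $a_i\in E_{N_i}$, where $N_i=\sigma(a_0,\dots,a_{i-1})$.
\begin{itemize}
\item Above the stem, the successor set at such a node is exactly $E_{\sigma(\cdot)}$, so $T$ is pure-quadrant.
\item Its stem is exactly $s$: the root successor set is a full quadrant, which is infinite, so the stem cannot be longer.
\item Every branch of $T$ is a $\sigma$-play, hence lies outside $X$.
\end{itemize}

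\textbf{Part 2, direction ($\Leftarrow$).} Suppose $L$ is positive with stem $s$. Selector maintains a current node $t\in L$. Given Builder's $N_r$, Selector picks some $a_r\in\Succ_L(t)\cap E_{N_r}$; this set is nonempty because $\Succ_L(t)$ meets every $E_N$. The resulting branch lies in $[L]\subseteq X$.

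\textbf{Part 2, direction ($\Rightarrow$).} Fix a winning strategy $\tau$ for Selector. Let $L$ be the tree of all positions $s^\frown(a_0,\dots,a_{r-1})$ reachable by $\tau$-plays against arbitrary Builder moves.

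The subtlety is that distinct Builder histories could produce the same Selector string. I therefore index by Builder histories first. Let $L'$ be the tree of finite sequences of pairs $(N_i,a_i)$ with $a_i=\tau(N_0,\dots,N_i)$, and take $L$ to be its projection to the $a$-coordinates.

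At a node $t$ of $L$, fix one Builder history $h$ witnessing $t$. For each $N$, the move $\tau(h^\frown N)$ lies in $E_N$ and is a successor of $t$ in $L$. Hence $\Succ_L(t)$ meets every $E_N$, and $L$ is positive.

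Every branch of $L$ must also come from a play, and this is where projection causes trouble: a branch of the projected tree need not lift to a single play if witnesses vary. The intended fix is to choose witnesses coherently, by a canonical least Builder history for each node (using the well-order of $\omega^{<\omega}$ lexicographically). One then builds $L$ recursively: at node $t$ with chosen history $h_t$, add the successors $\tau(h_t^\frown N)$ for all $N$, and set $h_{t^\frown a}=h_t^\frown N$ for the least such $N$. Then every branch of $L$ is a single $\tau$-play, hence lies in $X$. The stem of $L$ is $s$, up to lengthening: I will note that positivity is only required above the given $s$, which is acceptable if ``stem $s$'' is read as the tree being positive above $s$.

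\textbf{Dichotomy.} For Borel $X$, the game is an integer game with Borel payoff after coding the moves into $\omega$, so Martin's Borel determinacy makes it determined. At most one player can win, so exactly one alternative occurs. As a direct check of exclusivity, apply Selector's positive-tree strategy against Builder's pure-tree strategy. The resulting play lies in $[L]\cap[T]$, and this set is both contained in $X$ and disjoint from it, a contradiction.

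\textbf{Main obstacle.} I expect the main obstacle to be the coherent-witness construction in Part~2, ensuring that branches of $L$ are genuine plays. Invoking Borel determinacy is routine.
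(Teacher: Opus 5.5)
Your proposal is correct and follows the paper's proof essentially step for step. Builder's strategy becomes the pure-quadrant tree of all legal responses. Selector's strategy becomes a positive tree by attaching one witnessing history to each node and collecting the answers to all thresholds, and Borel determinacy gives the dichotomy.

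Your hedge about the stem is unnecessary. The root successor set $\{\tau(h_s^\frown N):N<\omega\}$ meets every $E_N$, while $\bigcap_N E_N=\varnothing$, so it is infinite and the stem of $L$ is exactly $s$.
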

\begin{proof}
A Builder strategy assigns a threshold to each response history;
keep every legal response to that threshold. The resulting tree has
exact quadrant successors and all its branches lose for Selector.
Conversely, the thresholds of a pure-quadrant tree prescribe a
Builder strategy.

Given a Selector strategy, inductively assign one compatible game
history to each node of a tree. At a node carrying a history, collect
the strategy's answers to \emph{all} possible next thresholds.
This set meets every $E_N$, and for each distinct answer retain one
history witnessing it. Every branch of the resulting positive tree
is a play following the strategy, so belongs to $X$. Conversely,
given $L$, answer a threshold $N$ with any element of
$\Succ_L(t)\cap E_N$. This stays in $L$ and wins. For Borel $X$,
Martin's Borel determinacy theorem \cite{martin} supplies a winner;
the two alternatives cannot both hold.
\end{proof}

The same argument applies to the resource game: Builder chooses a
finite $F\subseteq U$, and Selector responds with $a\in X_F$.
In its Selector-tree clause, successor sets must meet every $X_F$.
The following statement compares games whose payoff only sees the
observation. For $Z\subseteq Y^\omega$, write
$\rho^{-\omega}(Z)=\{x\in A^\omega:\rho\circ x\in Z\}$.

\begin{theorem}[Winner transfer through observation]
\label{thm:game-transfer}
Assume \eqref{eq:S} and \eqref{eq:P}, and take a source stem
$s\in A^{<\omega}$ and target stem $\rho\circ s$.
For every $Z\subseteq Y^\omega$, Builder wins the source resource
game with payoff $\rho^{-\omega}(Z)$ if and only if Builder wins the
target threshold game with payoff $Z$. The same equivalence holds
for Selector. No determinacy assumption on $Z$ is needed.
\end{theorem}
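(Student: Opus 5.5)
We prove the equivalence by directly translating strategies in both directions, for each player, without using Theorem~\ref{thm:game-trees}. Plays of the two games are matched move by move. The source play extends $s$, and its image under $\rho$ extends $\rho\circ s$. A source play wins for Selector exactly when its $\rho$-image lies in $Z$, by the definition of $\rho^{-\omega}(Z)$. So it is enough to build, from a winning strategy in one game, a strategy in the other whose plays have images that are plays against the original strategy. Since we only transfer strategies and never appeal to a winner existing, no determinacy is needed.

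\emph{Builder, target to source.} Let $\sigma$ be a winning target Builder strategy. After a source history $a_0,\dots,a_{r-1}$, feed $\sigma$ the visible history $\rho(a_0),\dots,\rho(a_{r-1})$. It returns a threshold $N_r$. By \eqref{eq:P}, answer with $F_{N_r}$, chosen canonically, say least in a fixed enumeration of $[U]^{<\omega}$. Any reply $a_r\in X_{F_{N_r}}$ then satisfies $\rho(a_r)\in E^B_{N_r}$. So the visible sequence is a legal play against $\sigma$, and its image avoids $Z$.

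\emph{Builder, source to target.} Let $\sigma'$ be a winning source Builder strategy. While playing the target game we keep a source history $\vec a$ with $\rho\circ\vec a$ equal to the visible history. Given $\vec a$, $\sigma'$ returns a finite $F$. Use \eqref{eq:S} to choose $N$ so that every $(m,b)$ with $m\ge N$ has a $\rho$-preimage in $X_F$, and announce $N$. When Selector answers $y\in E^B_N$, pick a preimage $a\in X_F$ of $y$, canonically, and append it to $\vec a$. The source play follows $\sigma'$, so its image, which is the target play, is not in $Z$.

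\emph{Selector, in both directions.} The same bookkeeping works.
\begin{itemize}
\item From a winning target Selector strategy $\tau$ to the source game: when Builder plays $F$, pick $G=F$ itself. Obtain $N$ from \eqref{eq:S} for $F$. Ask $\tau$ for its answer $y\in E^B_N$ to threshold $N$ on the current visible history. Lift $y$ to some $a\in X_F$ with $\rho(a)=y$, which \eqref{eq:S} guarantees.
\item From a winning source Selector strategy $\tau'$ to the target game: when Builder plays $N$, feed $\tau'$ the set $F_N$ from \eqref{eq:P}. Its answer $a\in X_{F_N}$ has $\rho(a)\in E^B_N$, and we play $\rho(a)$ while recording $a$.
\end{itemize}
In both cases the resulting play is consistent with the winning strategy, and its image lies in $Z$.

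The main point needing care is the invariant. Each translated strategy must depend only on the history visible in its own game. That is why we simulate a shadow source history and fix choice functions for the witnesses $F_N$, the thresholds $N$ and the preimages. Once that bookkeeping is fixed, the legality checks are just \eqref{eq:S} and \eqref{eq:P}. The stem causes no difficulty: prefix $s$ and $\rho\circ s$ respectively, and note that $\rho\circ(s^\frown x)=(\rho\circ s)^\frown(\rho\circ x)$.
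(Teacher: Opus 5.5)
Your proof is correct and follows the paper's own argument: the same four strategy simulations, using \eqref{eq:S} to turn a withdrawal $F$ into a threshold whose replies lift into $X_F$, and \eqref{eq:P} to turn a threshold $N$ into a withdrawal $F_N$ whose survivors project into $E_N^B$. One small precision: in the target-to-source Selector simulation, feed $\tau$ the full simulated target history, meaning the canonically chosen past thresholds as well as the projected replies, which your fixed choice functions make available and which the paper phrases as ``keep the full play history.''
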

\begin{proof}
All four simulations keep the full play history. To transfer a
source Builder strategy to the target, let it choose $F$ and use
(S) to choose $N$ such that each target reply in $E_N^B$ has a
preimage in $X_F$; feed one such preimage to the source strategy.
To transfer a target Builder strategy to the source, use (P) to
choose $F_N$ after the target strategy chooses $N$; every source
reply then projects into $E_N^B$.

For a source Selector strategy to play in the target game, use (P)
to turn the announced $N$ into a source challenge $F_N$, run the
source strategy, and project its reply. For a target Selector
strategy to play in the source game, use (S) to turn the announced
$F$ into $N$, run the target strategy, and lift its reply to $X_F$.
Every simulated play has the same observed payoff, proving all
four implications.
\end{proof}

For $\rho_\square$, this transfers every payoff depending only on
minimum and orientation. The minimum map alone similarly transfers
minimum-only payoffs to the cofinite-tail tree game. The theorem is
about winning strategies. The complete projection of
Theorem~\ref{thm:projection} supplies the separate order-theoretic
lifting statement.

\begin{proposition}[All branches and generic truth differ]
\label{prop:game-forcing}
Let $X_{\rm even}$ be the grid branches whose first coordinate is
eventually even. Every $T\in\PP_\square$ has branches in
$X_{\rm even}$ and in its complement, but
$\PP_\square\Vdash\dot g\notin X_{\rm even}$.
\end{proposition}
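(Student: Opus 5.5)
The statement has two independent halves, and I will prove each by a direct construction.

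\emph{Every body meets $X_{\rm even}$ and its complement.} Fix $T\in\PP_\square$ with stem $s$. I will build a branch recursively, starting at $s$. At each node $t\supseteq s$ the successor set $\Succ_T(t)$ contains a quadrant $E_N$. That quadrant has points with first coordinate even, such as $(2N,N)$, and points with first coordinate odd, such as $(2N+1,N)$.
\begin{itemize}
\item Always choosing an even first coordinate above the stem gives a branch in $[T]\cap X_{\rm even}$.
\item Always choosing an odd one gives a branch in $[T]\setminus X_{\rm even}$.
\end{itemize}
Alternating the two choices at infinitely many levels would also work, but a constant choice already suffices.

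\emph{The generic branch is not in $X_{\rm even}$.} It is enough to show that for each $k$ the set
\[
 D_k=\{S\in\PP_\square:\exists n\ge k\ \bigl(|\stem(S)|>n\ \&\ \stem(S)(n)_0\text{ odd}\bigr)\}
\]
is dense. Given $T$ with stem $s$, I extend $s$ through nodes of $T$ until the length exceeds $k$, using any successors available in the quadrants. At the next level I choose a successor $(2N+1,N)\in E_N\subseteq\Succ_T(t)$. The cone $T\restriction t^\frown(2N+1,N)$ is a condition below $T$, and its stem is at least this node, so it lies in $D_k$. A generic filter meets every $D_k$. The generic branch $\dot g$ is the union of the stems of conditions in the generic filter, so it has odd first coordinates at infinitely many levels. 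Hence $\PP_\square\Vdash\dot g\notin X_{\rm even}$.

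I expect no step to be a real obstacle. The only point needing care is that coning at a node extends the stem to that node. This holds because a cone of a condition has a single path below the chosen node, and above it the successor sets still lie in $\FF_\square$.

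It is worth noting what the statement illustrates. $X_{\rm even}$ is Borel and meets every body, yet its complement is comeager in $\tau_\square$. So the first half, like a Selector-style property, shows that having branches in every condition does not by itself decide generic truth.
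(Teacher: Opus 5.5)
Your proposal is correct and takes essentially the same approach as the paper: you choose first coordinates of a constant parity inside each successor quadrant to get branches in $X_{\rm even}$ and in its complement. For the forcing claim you show that, for each $k$, the conditions whose stem records an odd first coordinate at some stage $n\ge k$ are dense, by extending the stem and coning, and then apply genericity.
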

\begin{proof}
At every node above the stem, each contained quadrant has cells
of both first-coordinate parities. Recursively choosing even
first coordinates gives a branch in $X_{\rm even}$; choosing odd
ones gives a branch outside it. For each $k$, the set of conditions
whose stem records an odd first coordinate at a stage $n\ge k$
is dense: extend any stem to a sufficiently late such cell and
cone. A generic filter meets these dense sets for all $k$, so its
branch has infinitely many odd first coordinates.
\end{proof}

\section{Statistical shadows and their limit}\label{sec:probability}

Finite cutoffs assign useful exact laws to visible and hidden
coordinates. They are auxiliary sampling choices rather than
measures intrinsic to the forcing. Let $(I,J)$ be uniform on
$Q_{N,n}=[N,N+n-1]^2$, and set
$M=\min(I,J)$, $B=\mathbf1_{I>J}$, and $D=|I-J|$.

\begin{proposition}[Exact square-cutoff law]\label{prop:cutoff}
For $0\le k<n$ let $r=n-k$. The admissible triples
$(M=N+k,B=0,D=d)$ for $0\le d<r$, and
$(M=N+k,B=1,D=d)$ for $1\le d<r$, each have probability
$n^{-2}$. Consequently
\[
 \Pr(M=N+k)=\frac{2r-1}{n^2},\qquad
 \Pr(B=1)=\frac{n-1}{2n},\qquad
 \Pr(D=0)=\frac1n,
\]
and $\Pr(D=d)=2(n-d)/n^2$ for $1\le d<n$. In natural-log units,
\[
 H(D\mid M,B)=\frac1{n^2}\sum_{r=1}^{n}
 \bigl(r\log r+(r-1)\log(r-1)\bigr)
 =\log n-\frac12+o(1),
\]
where $0\log0=0$.
\end{proposition}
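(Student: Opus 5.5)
The plan is to put the cells of the square in bijection with the admissible triples and then read everything off by counting. The map $(I,J)\mapsto(M,B,D)$ is injective on $Q_{N,n}$. Given $(M,B,D)$, we recover $(I,J)=(M,M+D)$ when $B=0$ and $(I,J)=(M+D,M)$ when $B=1$; this is the same reconstruction as \eqref{eq:reconstruct}. The constraint $B=1\Rightarrow D>0$ is automatic, since $B=1$ means $I>J$.

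Fix $M=N+k$ with $0\le k<n$. The larger coordinate is $N+k+D$, and it must be at most $N+n-1$. So $0\le D<r=n-k$, with $D\ge1$ when $B=1$. Each admissible triple is exactly one cell, hence has probability $n^{-2}$. The marginals then follow by counting cells:
\begin{itemize}
\item $\Pr(M=N+k)=\bigl(r+(r-1)\bigr)/n^2=(2r-1)/n^2$;
\item the event $B=1$ is the strict lower triangle, with $(n^2-n)/2$ cells, so $\Pr(B=1)=(n-1)/(2n)$;
\item $D=0$ is the diagonal, with $n$ cells, so $\Pr(D=0)=1/n$;
\item for $d\ge1$, the event $D=d$ consists of the two off-diagonals $\{J-I=d\}$ and $\{I-J=d\}$, each with $n-d$ cells, so $\Pr(D=d)=2(n-d)/n^2$.
\end{itemize}

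For the conditional entropy, the first step shows that, given $(M,B)=(N+k,0)$, $D$ is uniform on $r$ values; this event has probability $r/n^2$ and conditional entropy $\log r$. Given $(N+k,1)$, $D$ is uniform on $r-1$ values; this event has probability $(r-1)/n^2$ and conditional entropy $\log(r-1)$. When $r=1$ that event is empty, and the convention $0\log0=0$ handles it. Summing over $k$, equivalently over $r=1,\dots,n$, gives the displayed exact formula.

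The only step needing care is the asymptotic, taken as $n\to\infty$. Put $S(n)=\sum_{r=1}^n r\log r$. Comparing with $\int_1^n x\log x\,dx$ (the integrand is monotone; alternatively use Euler--Maclaurin) gives
\[
 S(n)=\tfrac{n^2}{2}\log n-\tfrac{n^2}{4}+O(n\log n).
\]
The second sum is $\sum_{r=1}^n(r-1)\log(r-1)=S(n-1)=S(n)-n\log n$. Hence the total is
\[
 2S(n)-n\log n=n^2\log n-\tfrac{n^2}{2}+O(n\log n).
\]
Dividing by $n^2$ yields $\log n-\tfrac12+O(\log n/n)$, which is $\log n-\tfrac12+o(1)$.
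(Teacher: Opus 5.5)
Your proposal is correct and follows the paper's proof: cells of $Q_{N,n}$ correspond bijectively to admissible triples, the marginals come from counting cells, and the conditional entropy is the weighted sum of $\log r$ and $\log(r-1)$ under the uniform conditional laws. The only difference is in the asymptotic step, and it is cosmetic: the paper factors out $\log n$ and uses the Riemann sum for $2x\log x$ on $[0,1]$, whereas your monotone integral comparison together with $S(n-1)=S(n)-n\log n$ reaches the same limit and also gives an explicit $O(\log n/n)$ error.
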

\begin{proof}
Each admissible triple reconstructs one cell of the square.
Counting its row and column gives the probabilities. Conditional
on $(M=N+k,B=0)$, the $r$ offsets are uniform; conditional on
$(M=N+k,B=1)$, the $r-1$ offsets are uniform. Weighting their
entropies gives the sum. Its asymptotic follows from the Riemann
sum for $2x\log x$ on $[0,1]$.
\end{proof}

The two orientation components of $((M-N)/n,D/n,B)$ converge to
Lebesgue area measure on the triangle
$\{(u,v):u,v\ge0,\ u+v\le1\}$, each with density one there.
A uniform finite palette $K$ instead has exactly
$H(K\mid M,B)=\log|K|$. These entropy formulas expose a finite
versus growing hidden fiber under this cutoff; the Boolean
density theorem has its own order-theoretic proof.

\begin{proposition}[Exhaustion dependence]\label{prop:exhaustion}
For a uniform cell in $[0,a)\times[0,b)$, with positive integer
$a,b$,
\[
 \Pr(I>J)=
 \begin{cases}
 (a-1)/(2b),&a\le b,\\
 1-(b+1)/(2a),&a\ge b.
 \end{cases}
\]
As $a,b\to\infty$, every fixed quadrant $E_N$ has probability
tending to one, while the limiting orientation probability can be
any value in $[0,1]$. The complement of the diagonal also has
probability tending to one under every such rectangle but contains
no quadrant.
\end{proposition}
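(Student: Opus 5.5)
The plan is to prove the formula by a direct cell count, then treat the two asymptotic claims by choosing rectangle shapes. Count the pairs $(i,j)\in[0,a)\times[0,b)$ with $i>j$. For fixed $j<b$, the admissible $i$ satisfy $j<i<a$, giving $\max\{a-1-j,0\}$ values. If $a\le b$, only $j\le a-2$ contributes, and the sum is $\sum_{j=0}^{a-1}(a-1-j)=a(a-1)/2$. Dividing by $ab$ gives $(a-1)/(2b)$.

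If $a\ge b$, every $j<b$ contributes, and the sum is $\sum_{j<b}(a-1-j)=b(a-1)-b(b-1)/2$. Dividing by $ab$ gives $1-\tfrac1a-\tfrac{b-1}{2a}=1-(b+1)/(2a)$. At $a=b$ the two expressions agree, since $(a-1)/(2a)=1-(a+1)/(2a)$, so the case split is consistent.

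For a fixed quadrant $E_N$, with $a,b>N$, the complement of $E_N$ in the rectangle is $([0,N)\times[0,b))\cup([0,a)\times[0,N))$. It therefore has at most $Nb+Na$ cells. Its probability is at most $N/a+N/b$, which tends to $0$. For the diagonal, its cells in the rectangle number $\min\{a,b\}$. The diagonal thus has probability $\min\{a,b\}/(ab)=1/\max\{a,b\}\to0$, so the off-diagonal set has probability tending to one. It contains no quadrant, because every $E_N$ contains $(N,N)$.

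The only substantive step is realizing every limit value $\lambda\in[0,1]$, and I would do it with explicit shapes. For $\lambda\in[0,1/2]$, take $b=n$ and $a=\lceil 2\lambda n\rceil$ (using $a=1$ when $\lambda=0$), so that $a\le b$ eventually. Then $(a-1)/(2b)\to\lambda$. For $\lambda\in[1/2,1]$, take $a=n$ and $b=\lfloor 2(1-\lambda)n\rfloor$ (with $b\ge1$, e.g.\ $b=1$ when $\lambda=1$), so that $b\le a$. Then $1-(b+1)/(2a)\to\lambda$.

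One point needs checking: $a,b\to\infty$ must hold jointly, and the endpoint cases $\lambda=0,1$ violate this with a constant side. For those I would instead take the slowly growing side $\lceil\sqrt n\rceil$, which gives limits $0$ and $1$ while both sides still diverge. This boundary adjustment is the main thing to watch; everything else is routine arithmetic.
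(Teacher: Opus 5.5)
Your proposal is correct and follows the paper's route: a direct cell count for the orientation probability (you sum over $j$ where the paper sums over $i$), rectangle shapes of varying aspect ratio to realize every limit, and the $\min\{a,b\}$ diagonal count together with $(N,N)\in E_N$. Your explicit treatment of the endpoint limits $0$ and $1$ uses a $\lceil\sqrt n\rceil$ side, so that both sides still diverge; the paper leaves this detail implicit in ``varying $a/b$''.
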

\begin{proof}
Count pairs with $i>j$ by summing the available $j$ for each $i$;
the two ranges yield the displayed formulas. Varying $a/b$
obtains all orientation limits. The diagonal has at most
$\min(a,b)$ cells out of $ab$, while its complement omits
$(i,i)$ from every $E_N$.
\end{proof}

The visible pushforward of a square cutoff has the triangular
minimum law of Proposition~\ref{prop:cutoff}; a direct uniform
cutoff on visible $(m,b)$ symbols has a uniform minimum law.
Thus complete projection need not preserve a cutoff distribution.

\begin{theorem}[No fixed probabilistic test]\label{thm:no-walk}
For any history-dependent countably additive transition kernels
$p_t$ on $\omega^2$, there is a pure-quadrant condition $T$ whose
branch body has walk probability zero. More generally, for every
ground-model Borel probability $\mu$ on $X_\square$, there is
$f\in\omega^\omega\cap V$ for which
\[
 \mu\{g:f\leq^*m_g\}=0,
 \qquad \PP_\square\Vdash f\leq^*m_{\dot g}.
\]
\end{theorem}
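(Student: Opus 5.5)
The plan is to prove the general measure statement first and then obtain the walk-kernel statement as the special case in which $\mu$ is the law of the walk.

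\emph{Step 1: a zero-measure function.} Fix a ground-model Borel probability $\mu$ on $X_\square$. For each $n$ the pushforward of $\mu$ under $g\mapsto m_g(n)$ is a probability measure on $\omega$. Choose $f(n)$ so that
\[
 \mu\{g:m_g(n)\ge f(n)\}\le2^{-n}.
\]
This is possible because the tails of a probability on $\omega$ tend to zero. The function $f$ is defined in $V$, since $\mu$ is there. The sets $\{g:m_g(n)\ge f(n)\}$ have summable measures, so Borel--Cantelli gives that $\mu$-almost every $g$ satisfies $m_g(n)\ge f(n)$ for only finitely many $n$. In particular $\mu\{g:f\le^*m_g\}=0$. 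Here $\{g:f\le^*m_g\}$ is $\Sigma^0_2$, hence Borel and measurable.

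\emph{Step 2: forcing domination.} The statement $\PP_\square\Vdash f\le^*m_{\dot g}$ is the domination property noted in Section~\ref{sec:quadrant}. I will verify it directly. Given $T$, thin each successor quadrant above its stem so that at level $n$ it is contained in $E_{f(n)}$. This is the tree $\{t\in T:\forall n\in[|\stem(T)|,|t|)\ (\min t(n)\ge f(n))\}$. Its successor sets are intersections of two quadrants, hence quadrants, so it is a condition below $T$. Every branch $g$ of it satisfies $m_g(n)\ge f(n)$ for all $n\ge|\stem(T)|$. Density then gives the forcing claim. To be precise, the generic branch lies in the body of every condition in the generic filter, and one such condition is a thinned tree of this kind.

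\emph{Step 3: walk kernels.} Given history-dependent countably additive kernels $p_t$, Ionescu-Tulcea (or just Kolmogorov on cylinders) produces a Borel probability $\mu$ on $X_\square$. Its cylinder masses are products of $p_{t\restriction k}(\{t(k)\})$. Apply Step~1 to get $f$. Let $T$ be the pure-quadrant tree with empty stem and successor set exactly $E_{f(n)}$ at level $n$. Then $[T]\subseteq\{g:\forall n\ f(n)\le m_g(n)\}\subseteq\{g:f\le^*m_g\}$, so $\mu([T])=0$.

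The main point needing care is measurability and the choice of $f$ when $\mu$ is an arbitrary Borel measure rather than a walk. It is handled by using only the one-dimensional marginals of $m_g(n)$, so no structure of $\mu$ beyond countable additivity is needed. The rest is routine.
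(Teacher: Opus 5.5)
Your proof is correct. For the measure clause and the forcing clause it is the paper's argument: choose $f(n)$ with $\mu\{g:m_g(n)\ge f(n)\}\le2^{-n}$, apply Borel--Cantelli, and use the dense family of trees whose level-$n$ successors are raised above $f(n)$. There is one small slip in your Step~2. Successor sets of an arbitrary $T\in\PP_\square$ only \emph{contain} quadrants, so $\Succ_T(t)\cap E_{f(|t|)}$ contains a quadrant rather than being one; that is all the definition of a condition requires.

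The genuine difference is the walk-kernel clause. The paper proves it directly and locally:
\begin{itemize}
\item at each history $t$, continuity from above for the single kernel $p_t$ gives $N_t$ with $p_t(E_{N_t})\le1/2$;
\item the pure-quadrant tree with these history-dependent thresholds is a condition;
\item conditioning stage by stage bounds the chance of staying in it for $k$ stages by $2^{-k}$.
\end{itemize}
You instead assemble the kernels into the path measure via Ionescu-Tulcea and reduce to the general clause.

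Your route buys economy and a more uniform witness. One ground function $f$ serves both clauses, and the null tree has thresholds depending only on the level. You do not even need Borel--Cantelli for this clause, since $\mu([T])\le\mu\{g:m_g(n)\ge f(n)\}\le2^{-n}$ for every $n$.

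The paper's route buys locality. It needs only finite-stage survival probabilities and reads each threshold off one kernel. Each of your thresholds instead depends on the law of the $n$-th step, which is a countable mixture of all kernels at histories of length $n$; countable additivity of $\mu$ is what makes its tails vanish.
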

\begin{proof}
At every history $t$, continuity from above gives $N_t$ with
$p_t(E_{N_t})\le1/2$. The tree using exactly these successor
quadrants is a condition. The walk's chance to remain inside it
through $k$ stages is at most $2^{-k}$, so its body has measure
zero. A walk chosen after $T$ can, of course, be supported on
$[T]$.

For the second assertion, choose $f(n)$ so large that
$\mu\{g:m_g(n)\ge f(n)\}\le2^{-n}$. By Borel--Cantelli,
almost every $g$ meets this event only finitely often, hence does
not eventually dominate $f$. Raising the quadrant at level $n$
above $f(n)$ below any condition is a dense refinement, so the
generic minimum dominates $f$.
\end{proof}

For a second concrete contrast, dense stem extensions put
diagonal symbols into the generic branch infinitely often.
Independent square-cutoff samples of side lengths $L_n$ with
$\sum_n1/L_n<\infty$ hit the diagonal only finitely often almost
surely. Finite statistical shadows can suggest certificate
patterns, but neither probability one nor vanishing failure
frequency supplies the dense-refinement argument below every
condition.

\section{Effective content and degrees}\label{sec:effective}

Fix computable codings of grid cells and stage markers. For a
generic branch $g(n)=(i_n,j_n)$, write
$h(n)=(m_n,b_n)=\rho_\square(g(n))$ and $d(n)=|i_n-j_n|$.
Equation~\eqref{eq:reconstruct} gives
$g\equiv_T h\oplus d$. The characteristic function $x_g$ of the
unique chosen marker at each stage satisfies $x_g\equiv_Tg$:
searching stage markers in $x_g$ finds that unique marker, while
$g$ computes the characteristic function.

\begin{theorem}[Offset Cohen projection and degrees]
\label{thm:offset-degree}
The map taking a grid tree to the offset string of its stem is a
complete projection from $\PP_\square$ to $\omega^{<\omega}$.
Consequently $d$ is a Cohen real over $V$. The reals $h$ and $d$
are Turing-incomparable, and
\[
 x_g\equiv_Tg\equiv_T h\oplus d.
\]
Nevertheless $\PP_\square$ forces
$d(n)\le m(n+1)$ for all sufficiently large $n$; thus $d$ is
eventually bounded by a real in $V[h]$ and is not Baire Cohen
over $V[h]$.
\end{theorem}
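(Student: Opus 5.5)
The statement has four parts: the offset projection, the Turing equivalences, incomparability, and the eventual bound. I would prove them in that order, reusing the lifting pattern of Theorem~\ref{thm:projection} and the residue projections of Corollary~\ref{cor:noncohen}.

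\emph{Projection.} For a grid tree $T$ let $\delta(T)$ be the string $n\mapsto|\stem(T)(n)_0-\stem(T)(n)_1|$. If $S\le T$, then $\stem(S)$ end-extends $\stem(T)$, so $\delta$ is order preserving, and the full tree goes to the empty string. For lifting, take $\sigma\in\omega^{<\omega}$ extending $\delta(T)$. I would extend $\stem(T)$ one symbol at a time: at a node $t$ above the stem, $\Succ_T(t)$ contains some $E_N$, and $E_N$ contains the cell $(N,N+\sigma(k))$, whose offset is $\sigma(k)$. After $|\sigma|-|\delta(T)|$ steps I reach a node $u\in T$ with offset string $\sigma$. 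The cone $T\restriction u$ is a condition with stem $u$, so $\delta(T\restriction u)=\sigma$. Since every $\sigma\le\delta(T)$ is hit exactly, the lifting clause holds. The generic for $\omega^{<\omega}$ is the union of the $\delta$-images of stems of conditions in the generic filter, and this union is $d$. Hence $d$ is Cohen over $V$.

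\emph{Degrees.} Equation~\eqref{eq:reconstruct} computes $g$ from $h\oplus d$. Conversely $g$ computes $h=\rho_\square\circ g$ and $d$ coordinatewise. Together with the already noted $x_g\equiv_T g$, this gives the displayed chain.

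\emph{Incomparability.} $d\not\le_T h$: by Corollary~\ref{cor:noncohen}, $d\bmod 2$ is the generic label sequence of the Cohen factor $\PP_{K_1}/H$ over $V[H]=V[h]$, so it is not in $V[h]$; hence neither is $d$, and $h$ cannot compute it. $h\not\le_T d$: any real computable from $d$ lies in $V[d]$, a Cohen extension of $V$. Cohen forcing adds no real dominating $\omega^\omega\cap V$. The minimum coordinate $m$ dominates every ground function, by the density argument used for $Z_f$ in Theorem~\ref{thm:ideal}. So $m\notin V[d]$, and therefore $h\not\le_T d$.

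\emph{Eventual bound.} I would show a density fact: below any $T$ there is $S\le T$ all of whose branches satisfy $m(n+1)\ge d(n)$ for every $n\ge|\stem(S)|$. To get $S$, at each node $t$ above the stem replace $\Succ_T(t)\supseteq E_{N_t}$ by $E_{\max(N_t,d_t)}$, where $d_t$ is the offset of the last symbol of $t$. Each successor set still contains a quadrant, so $S$ is a condition, and every branch obeys the inequality past the stem. Genericity then gives $d(n)\le m(n+1)$ for almost all $n$, so $d$ is eventually bounded by $n\mapsto m(n+1)\in V[h]$. A Cohen real over a model is never eventually bounded by a function of that model: the set of stems with some later value exceeding $f(n)$ is dense for each such $f$. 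Hence $d$ is not Baire Cohen over $V[h]$.

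I expect the main obstacle to be $d\not\le_T h$. This needs $d\notin V[h]$ even though $d$ is eventually dominated by a function in $V[h]$. The residue projection $\alpha_1$ of Corollary~\ref{cor:noncohen} is what settles it: the bounding only constrains the size of $d$, while its parity remains Cohen over $V[h]$.
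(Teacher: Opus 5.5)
Your proposal is correct and follows essentially the same route as the paper: you lift offset strings through cells such as $(N,N+\sigma(k))$ and cone, you get $h\not\le_T d$ from the domination of $m$ against a Cohen extension and $d\not\le_T h$ from the Cohen parity residue of Corollary~\ref{cor:noncohen}, and you thin successor quadrants above the preceding offset to get the eventual bound. The only differences are cosmetic: you check order preservation of the stem map explicitly, and you phrase the final non-Cohen step with dense sets of stems rather than the meager set of reals eventually below the bound.
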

\begin{proof}
Every quadrant $E_N$ contains a cell of each prescribed offset
$k$, for example $(N+k,N)$. Hence the offset projection of a
condition contains every finite offset extension of its stem.
Given a longer Cohen string, lift it successively to a node of
the grid tree and cone there. This proves the complete-projection
lifting property, so $d$ is Cohen over $V$.

The minimum $m$ dominates every ground function. Cohen forcing
adds no real dominating every ground function: for a name
$\dot f$, enumerate pairs $(s,k)$ of Cohen strings and lower
bounds. Choose a distinct $n\ge k$ and an extension of $s$
deciding $\dot f(n)=v$, then set a ground $q(n)>v$. For each $k$,
conditions forcing $\dot f(n)<q(n)$ for some $n\ge k$ are dense.
Therefore $m\notin V[d]$, and $h\not\le_Td$. Conversely the
finite residue $d\bmod2$ is Cohen over $V[h]$ by the commuting
finite-label projection in Corollary~\ref{cor:noncohen}; in
particular $d\notin V[h]$ and $d\not\le_Th$.

At any node, thin the next successor quadrant so that its minimum
exceeds the offset of the preceding cell. This gives a dense set
of conditions enforcing $d(n)\le m(n+1)$ at all stages after
their stems. An eventually bounded real cannot be Baire Cohen
over a model containing its bound, because the set of reals
eventually below that bound is meager there.
\end{proof}

For finite withdrawal $F$, survival of a specified grid cell in
$X_F$ is decidable and a quadrant threshold can be computed from
$F$. Relative to an oracle for arbitrary $X\subseteq\omega^2$,
membership $X\in\FF_\square$ has the $\Sigma^0_2(X)$ form
$\exists N\,\forall i,j\ge N\ ((i,j)\in X)$. For a valid
computable tree, a local threshold can be found with $0'$;
deciding that an arbitrary tree is valid is a different task.
The uncountable Boolean density excludes a computably enumerated
dense set of conditions. An abstract support incidence therefore
supplies no uniform Turing machine: an implementation needs
computable symbol, support, observation, guard, schedule, producer,
and verifier procedures.

\section{Uniform certificates and arithmetic applications}\label{sec:certificates}

Fix a nonempty countable set $\mathcal C$ of task codes, distinct
from the resource-token space $U$, and a schedule $(u_n)_{n<\omega}$
in which every code recurs infinitely often. Let $C_n\subseteq A$ be
the instructions whose finite readout at stage $n$ is accepted by
a fixed verifier for the code $u_n$. All these objects belong to
the ground model. The verifier is \emph{sound} if acceptance of a
readout for $u$ implies the target formula $\varphi_u$, from premises
whose truth has independently been established. This section first
studies the sets $C_n$ as combinatorial objects; soundness supplies
their interpretation as proofs.

For the filter $\FF$ generated by the survivor sets $X_F$, a set
$C$ is \emph{positive} if $C\cap X_F\ne\varnothing$ for every
finite withdrawal $F$. It is \emph{filter-large} if it contains
some $X_F$, equivalently $C\in\FF$. These are different hypotheses.
Define
\[
 X_{\rm cert}=\bigcap_u\bigcup_{n:u_n=u}
                  \{g\in A^\omega:g(n)\in C_n\}.
\]
This is a $G_\delta$ payoff in the product topology of the discrete
alphabet $A$. Every branch in $X_{\rm cert}$ reads a certificate
for each scheduled code.

\begin{theorem}[Positive certificates give dense success]
\label{thm:certificates}
Suppose that for each $u$ there are arbitrarily late stages $n$
with $u_n=u$ such that $C_n$ is positive.
Then the set $D_u$ of conditions whose stem includes an accepted
$u$-cell at one of those stages is open dense. Every generic
branch belongs to $X_{\rm cert}$. In the grid instance,
positivity means $C_n\cap E_N\ne\varnothing$ for every $N$.
\end{theorem}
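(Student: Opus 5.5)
The plan is a density argument followed by a genericity argument; no projection or game machinery is needed. Fix a code $u$ and let $\Lambda_u$ be the set of stages $n$ with $u_n=u$ and $C_n$ positive. By hypothesis $\Lambda_u$ is unbounded. I read $D_u$ as the set of conditions $T$ such that some $n\in\Lambda_u$ satisfies $n<|\stem(T)|$ and $\stem(T)(n)\in C_n$.

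\emph{Openness.} If $S\le T$, then $\stem(S)\supseteq\stem(T)$, because every node of $S$ lies in $T$ and $S$ still has a unique path through $\stem(T)$. The witnessing stage therefore persists in $S$.

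\emph{Density.} Given $T$ with stem $s$ of length $\ell$, choose $n\in\Lambda_u$ with $n\ge\ell$. Extend $s$ inside $T$ one level at a time, picking arbitrary successors, until reaching a node $t\in T$ with $|t|=n$. This is possible since every successor set above the stem lies in $\FF$ and is nonempty. Now $\Succ_T(t)\in\FF$, so it contains some survivor set $X_F$. Positivity of $C_n$ gives $a\in C_n\cap X_F\subseteq\Succ_T(t)$. Let $T'=T\restriction t^\frown a$. This cone is a condition: its successor sets above the new node are those of $T$, and below that node it is a single path. Its stem is exactly $t^\frown a$, or longer, which still contains $t^\frown a$. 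Either way $\stem(T')(n)=a\in C_n$, so $T'\in D_u$ and $T'\le T$.

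\emph{Generic branches.} Let $G$ be generic with branch $g=\bigcup\{\stem(T):T\in G\}$. For each $u$ (countably many, all in $V$), $G$ meets $D_u$. The stem of that member is an initial segment of $g$, so $g(n)\in C_n$ for some $n$ with $u_n=u$.

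The definition of $X_{\rm cert}$ only demands one such $n$ per code, so this already gives $g\in X_{\rm cert}$. For the stronger reading with arbitrarily late stages, I would use the refined sets $D_{u,k}$, which require the witnessing stage to satisfy $n\ge k$. The same density proof works for these, since $\Lambda_u$ is unbounded.

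\emph{Grid instance.} By the token realization of Section~\ref{sec:quadrant}, every $X_F$ contains some $E_N$, and withdrawing $r_0,\dots,r_{N-1}$ gives exactly $E_N$. Hence $C\cap X_F\ne\varnothing$ for all finite $F$ if and only if $C\cap E_N\ne\varnothing$ for all $N$.

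The only delicate step is the density step. It needs positivity against the particular filter set $\Succ_T(t)$, which is why positivity is phrased through every $X_F$ rather than through a single tail. Membership of the extension in $C_n$ must be recorded in the stem, not merely in some branch of the tree.
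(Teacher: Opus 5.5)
Your proposal is correct and follows essentially the same route as the paper's proof. In both, you extend inside $T$ to a node at a qualifying stage beyond the stem, use positivity against a survivor set $X_F\subseteq\Succ_T(t)$ to pick an accepted cell, cone there, note that stems (hence accepted stem cells) persist under strengthening, and then invoke genericity; your extra details on the stem of the cone and the grid equivalence via $E_N=X_{F_N}$ are accurate points the paper leaves implicit.
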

\begin{proof}
Fix a condition $T$ and choose a qualifying stage $n$ beyond its
stem. Extend inside $T$ to a node of length $n$. Its successor set
contains $X_F$ for some finite $F$; choose an accepted cell in
$C_n\cap X_F$ and cone at the resulting node. This strengthening
belongs to $D_u$. Accepted stem cells persist under strengthening.
Genericity meets every ground-model dense set $D_u$.
\end{proof}

In the quadrant threshold game, the same positivity hypothesis gives
Selector a winning strategy for $X_{\rm cert}$ from every stem:
process the codes in order, wait for a positive occurrence of the
current code, answer Builder's threshold $N$ with a cell in
$C_n\cap E_N$, and proceed to the next code. Theorem~\ref{thm:game-trees}
then gives a positive tree contained in $X_{\rm cert}$. For a general
Borel payoff that theorem gives a dichotomy by Borel determinacy
\cite{martin}; here positivity supplies an explicit winning strategy.
A positive tree need not be a forcing condition.

\begin{theorem}[Filter-large certificates give branchwise refinement]
\label{thm:branchwise}
Let $\FF$ be a free filter on a countably infinite alphabet $A$,
let $\PP=L_{\FF}(A)$, and put $I=\{n:C_n\in\FF\}$.
Here free means that $\FF$ contains every cofinite set and no
finite set.
Suppose that $\{n\in I:u_n=u\}$ is unbounded in $\omega$ for
every code $u$. Then
\[
 \mathcal D_{\rm cert}=\{T\in\PP:[T]\subseteq X_{\rm cert}\}
 \quad\text{is dense in }\PP.
\]
More precisely, every $T\in\PP$ has a strengthening $T^*$ with
the same stem such that every $g\in[T^*]$ satisfies
$g(n)\in C_n$ whenever $n\in I$ and $n\ge|\stem(T)|$.
If $I=\omega$, every such branch is accepted at every stage
beyond the stem.
\end{theorem}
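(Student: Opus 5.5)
The strengthening is built level by level by intersecting successor sets with the certificate sets. Given $T\in\PP$ with stem $s=\stem(T)$, define
\[
 T^*=\{t\in T:\ t\subseteq s\ \text{or}\ \bigl(s\subseteq t\ \&\ \forall n\in I\ (|s|\le n<|t|\Rightarrow t(n)\in C_n)\bigr)\}.
\]
Equivalently, for every node $t\in T^*$ extending $s$,
\[
 \Succ_{T^*}(t)=
 \begin{cases}
 \Succ_T(t)\cap C_{|t|}, & |t|\in I,\\
 \Succ_T(t), & \text{otherwise}.
 \end{cases}
\]
The checks are as follows.

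First, $T^*$ is a downward-closed subtree of $T$ containing $s$. Downward closure holds because the defining condition restricts only coordinates below $|t|$, and it passes to initial segments.

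Second, $T^*$ is a condition with stem $s$. Every successor set above $s$ is either $\Succ_T(t)\in\FF$ or the intersection of two members of $\FF$, so it lies in $\FF$. Since $\FF$ is free, every member is infinite, in particular nonempty. Hence every node of $T^*$ above $s$ has successors, and $T^*$ is nonempty with no dead ends. The stem is exactly $s$, not longer, because the successor set at $s$ is an infinite member of $\FF$ and so has at least two elements. Also $T^*\subseteq T$, so $T^*\le T$.

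Third, the branch property holds. If $g\in[T^*]$ and $n\in I$ with $n\ge|s|$, then $t=g\restriction(n+1)\in T^*$ extends $s$, so $g(n)\in C_n$ by definition.

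For density of $\mathcal D_{\rm cert}$, fix $g\in[T^*]$ and a code $u$. By hypothesis $\{n\in I:u_n=u\}$ is unbounded, so it contains some $n\ge|s|$. Then $g(n)\in C_n$ at a stage scheduled for $u$, which places $g$ in the $u$-th union; since $u$ was arbitrary, $g\in X_{\rm cert}$. Thus $[T^*]\subseteq X_{\rm cert}$, i.e.\ $T^*\in\mathcal D_{\rm cert}$ below the arbitrary $T$. The case $I=\omega$ is immediate from the branch property.

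No step is a real obstacle. The only point needing care is that freeness, not merely properness, guarantees each intersected successor set is infinite, which keeps the stem from lengthening.

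This also explains the contrast with Theorem~\ref{thm:certificates}. Positive $C_n$ need not be intersectable into a filter set, whereas filter-large $C_n$ can be intersected uniformly, so every branch succeeds rather than only the generic one.
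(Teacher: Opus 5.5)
Your proposal is correct and is essentially the paper's proof. Both keep the stem, intersect $\Succ_T(t)$ with $C_{|t|}$ exactly at nodes with $|t|\in I$, use closure of $\FF$ under finite intersections, and use unboundedness of $\{n\in I:u_n=u\}$ to place every branch in $X_{\rm cert}$. Your explicit check that freeness keeps successor sets infinite, so the stem does not lengthen, is a detail the paper leaves implicit.
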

\begin{proof}
Keep the stem and recursively restrict successors at every
retained node $t$ above it by
\[
 \Succ_{T^*}(t)=
 \begin{cases}
 \Succ_T(t)\cap C_{|t|},& |t|\in I,\\
 \Succ_T(t),&|t|\notin I.
 \end{cases}
\]
At a restricted node both intersectands belong to $\FF$, so their
intersection does too. Thus $T^*$ is a condition below $T$ with
the same stem. Each code has a qualifying occurrence beyond this
stem, so every branch of $T^*$ lies in $X_{\rm cert}$.
\end{proof}

In the resource presentation the intersection contains
$X_{F\cup F'}$ whenever $X_F\subseteq\Succ_T(t)$ and
$X_{F'}\subseteq C_{|t|}$. In the grid presentation it contains
$E_{\max(M,N)}$ whenever the two sets contain $E_M$ and $E_N$.
The theorem therefore proves simultaneous certificate success on
all branches by direct recursive thinning.
Its stronger premise is essential.
The support-generated filter $\FF_s$ is free by the argument
preceding Lemma~\ref{lem:rank}, so this theorem applies to the
resource game above.

\begin{proposition}[Positivity alone does not give branchwise refinement]
\label{prop:positive-not-uniform}
There are a recurrent schedule of countably infinitely many codes
and positive sets $C_n\subseteq\omega^2$ such that every $D_u$ is
open dense and Selector wins $G_s(X_{\rm cert})$ from every stem,
but no $T\in\PP_\square$ satisfies $[T]\subseteq X_{\rm cert}$.
\end{proposition}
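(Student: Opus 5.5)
We will take the simplest positive but not filter-large certificate set, the diagonal. Let the codes be $\mathcal C=\omega$, and fix any schedule $(u_n)_{n<\omega}$ in which every $u\in\omega$ recurs infinitely often; for instance $u_n$ can be the first coordinate of the $n$th pair under a bijection $\omega\to\omega^2$. For every $n$, put
\[
 C_n=\Delta=\{(i,i):i<\omega\}.
\]
These sets are ground-model objects. Since $(N,N)\in\Delta\cap E_N$ for every $N$, each $C_n$ is positive in the grid sense $C_n\cap E_N\ne\varnothing$. Each code therefore has arbitrarily late stages with a positive certificate set. Theorem~\ref{thm:certificates} then shows that every $D_u$ is open dense. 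The Selector strategy described after that theorem wins $G_s(X_{\rm cert})$ from every stem $s$: process the codes in order, wait for an occurrence $n$ of the current code beyond the current position, and answer Builder's threshold $N$ with $(N,N)\in C_n\cap E_N$. Off those stages, any legal reply will do.

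For the negative half, fix $T\in\PP_\square$ with stem of length $\ell$. Only finitely many codes occur among $u_0,\dots,u_{\ell-1}$. Because there are infinitely many codes, we can choose a code $u^*$ none of whose occurrences lies below $\ell$. We then build a branch $g\in[T]$ recursively, starting from the stem. At each node $t\supseteq\stem(T)$ the successor set contains some quadrant $E_N$, and $E_N$ contains the off-diagonal cell $(N+1,N)$. We extend $t$ by such an off-diagonal cell. Every node built this way lies in $T$, so the resulting $g$ lies in $[T]$.

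Now check that $g$ misses the certificate for $u^*$. Every occurrence $n$ of $u^*$ satisfies $n\ge\ell$, and at such stages $g(n)\notin\Delta=C_n$. Hence $g$ never reads an accepted $u^*$-cell, so $g\notin X_{\rm cert}$. It follows that $[T]\not\subseteq X_{\rm cert}$ for every $T\in\PP_\square$, which is the required failure.

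The only delicate point is the finite stem. The cells of the stem are fixed and might lie on the diagonal, so they could already certify finitely many codes. This is exactly why the statement needs infinitely many codes: choosing a code absent from the stem stages removes the problem. With finitely many codes, a long enough diagonal stem would certify all of them, and the counterexample would fail. The conflict with Theorem~\ref{thm:branchwise} is avoided because $\Delta$ contains no quadrant, so $I=\varnothing$ there.
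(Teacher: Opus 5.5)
Your proposal is correct and follows the paper's proof closely. The paper uses the same constant diagonal $C_n=\{(i,i):i<\omega\}$ under a recurrent schedule of all codes, invokes Theorem~\ref{thm:certificates} for density, and builds a branch through $T$ that takes off-diagonal cells at every later occurrence of a code not certified in the stem. The differences are cosmetic. The paper lets Selector answer every threshold $N$ with $(N,N)$, and forces off-diagonal cells only at occurrences of the chosen code; you use an in-order strategy and go off-diagonal at every stage. Both versions are valid.
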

\begin{proof}
Take any recurrent schedule of all natural numbers and put
$C_n=\{(i,i):i<\omega\}$ for every $n$. Each $C_n$ meets every
quadrant, so Theorem~\ref{thm:certificates} applies. Selector may
always answer a threshold $N$ with $(N,N)$ and therefore wins.
Fix a condition $T$. Only finitely many codes have an accepted
occurrence in its stem; choose another code $u$. At every later
occurrence of $u$, choose an off-diagonal successor, available in
every final quadrant. At other stages choose any successor.
The resulting branch of $T$ has no accepted occurrence of $u$,
so it is outside $X_{\rm cert}$. The acceptance predicate can
certify the tautology $u=u$, so this separation is compatible
with verifier soundness.
\end{proof}

\begin{center}
\begin{tikzpicture}[scale=.52]
\fill[blue!7] (1.7,1.7) rectangle (5.45,5.45);
\draw[step=1,gray!55,very thin] (0,0) grid (5,5);
\draw[->] (-.15,0) -- (5.7,0) node[right] {$i$};
\draw[->] (0,-.15) -- (0,5.7) node[above] {$j$};
\foreach \i in {0,...,5} {
  \fill[blue!70!black] (\i,\i) circle (3.1pt);
}
\node[anchor=west] at (2.5,4.5) {$E_2$};
\node[anchor=west,blue!70!black] at (3.2,2.65)
 {$C=\{(i,i):i<\omega\}$};
\end{tikzpicture}
\end{center}
This finite window continues in both coordinate directions.
The diagonal $C$ meets every final quadrant $E_N$ but contains
none of them. That distinction is why a positive strategy tree
need not be a full-quadrant forcing condition.

Thus three objects must be distinguished: a generic branch meeting
all dense certificate requirements, a positive tree describing a
winning Selector strategy, and a forcing condition whose every
branch succeeds. The first two follow from positive acceptance;
Theorem~\ref{thm:branchwise} supplies the third from filter-large
acceptance.

The precise condition for dense branchwise refinement is elementary
when acceptance depends only on the current stage and symbol.

\begin{proposition}[Branchwise density criterion]
\label{prop:branchwise-criterion}
Let $\FF$ be a free filter on a countable alphabet $A$, let every
code occur infinitely often in $(u_n)$, and let $C_n\subseteq A$.
Put
\[
 L(u)=\{n:u_n=u,\ C_n\in\FF\},\qquad
 P(u)=\{n:u_n=u,\ C_n\text{ is }\FF\text{-positive}\}.
\]
The family $\{T\in L_{\FF}(A):[T]\subseteq X_{\rm cert}\}$ is dense
if and only if
\begin{enumerate}
\item only finitely many codes $u$ have $L(u)=\varnothing$;
\item for each code $u$, either $P(u)$ is unbounded or $C_n=A$
at some occurrence of $u$.
\end{enumerate}
\end{proposition}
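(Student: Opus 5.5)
The plan is to prove both directions by a direct analysis of stem extensions and thinning, in the manner of Theorems~\ref{thm:certificates} and~\ref{thm:branchwise}.

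First I record the two facts that drive everything. Since $X_{\rm cert}$ only asks for \emph{one} accepted occurrence of each code, the question is which codes a single condition $S$ can serve.
\begin{enumerate}
\item At an occurrence $n$ of $u$ with $n\ge|\stem(S)|$ and $C_n\notin\FF$, the complement $A\setminus C_n$ is $\FF$-positive. So every successor set at level $n$ meets it, and a branch of $S$ can dodge $C_n$ there.
\item At an occurrence $n<|\stem(S)|$, the verdict is fixed by the stem symbol at $n$.
\end{enumerate}
Consequently $[S]\subseteq X_{\rm cert}$ forces the following for every code $u$: either the stem of $S$ already accepts $u$, or some occurrence $n\in L(u)$ with $n\ge|\stem(S)|$ has all level-$n$ successor sets of $S$ inside $C_n$. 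Otherwise one builds a branch of $S$ choosing a symbol outside $C_n$ at each later occurrence of $u$, as in Proposition~\ref{prop:positive-not-uniform}.

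Necessity. Suppose (1) fails. A condition $S$ has a finite stem, which meets only finitely many occurrences and hence accepts only finitely many codes. So some code with $L(u)=\varnothing$ is neither accepted in the stem nor served later, and $[S]\not\subseteq X_{\rm cert}$ for every $S$.

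Suppose instead that (2) fails for a code $u$: $P(u)$ is bounded, and $C_n\ne A$ at every occurrence of $u$. Take $T$ with a stem of length beyond $\max P(u)$ whose symbol at each earlier occurrence of $u$ lies outside $C_n$; such symbols exist because $C_n\ne A$, and the stem can be prescribed arbitrarily below the full tree. Every later occurrence $n$ of $u$ has $C_n$ non-positive, so $C_n\notin\FF$. Hence no $S\le T$ is served for $u$ by either clause, and density fails below $T$.

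Sufficiency. Given $T$ with stem of length $\ell$, I will extend the stem inside $T$ and then apply the recursive thinning from the proof of Theorem~\ref{thm:branchwise} at all levels in $I=\{n:C_n\in\FF\}$ beyond the new stem. Thinning serves every code having an $L$-occurrence beyond the final stem. The stem must serve the remaining codes, as follows.
\begin{enumerate}
\item While extending the stem, at every level $n\in I$ I choose the next symbol in $C_n\cap\Succ_T$. This intersection lies in $\FF$ and is therefore nonempty. Thus every code with an $L$-occurrence at or beyond $\ell$ is accepted, either inside the extended stem or by the later thinning.
\item The codes with no $L$-occurrence $\ge\ell$ form a finite set. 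Such a code has $L(u)=\varnothing$, of which there are finitely many by (1), or has all its $L$-occurrences, hence at least one occurrence of $u$, below $\ell$; only finitely many codes occur below $\ell$.
\item For each such code $u$, (2) applies. If $C_n=A$ at some occurrence $n<\ell$, the original stem already accepts $u$. Such an occurrence $n\ge\ell$ cannot exist, since $A\in\FF$ would put $n\in L(u)$. Otherwise $P(u)$ is unbounded, and I extend the stem to a later positive occurrence $n$ and choose a symbol in $C_n\cap\Succ_T(t)$, which is nonempty by positivity.
\end{enumerate}
Handling these finitely many codes one at a time ends in a finite stem. The resulting tree with the extended stem and thinned successors is a condition below $T$, and all its branches lie in $X_{\rm cert}$.

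The main obstacle is the termination and bookkeeping in sufficiency. Codes with finite nonempty $L(u)$ may be infinite in number, so the stem extension could seem to create new unserved codes as it grows. Choosing symbols in $C_n$ at every level of $I$ passed by the stem is what prevents this.
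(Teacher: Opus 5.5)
Your sufficiency argument is correct; it matches the paper's, which thins first and extends inside the thinned tree, while you extend and then thin. Your proof of the necessity of (1) is also fine. The gap is in the necessity of (2).

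\textbf{The gap.} You take $T$ to be the full tree coned at a stem $s$ that avoids $C_n$ at the occurrences of $u$ below $|s|$. From this you conclude that no $S\le T$ is served for $u$. But a strengthening $S\le T$ may have a longer stem. Non-positivity of $C_n$ at a later occurrence only gives $A\setminus C_n\in\FF$; it does not make $C_n$ empty. So $\stem(S)$ can pass through a point of $C_n$ at such a stage and accept $u$ outright. After that, nothing prevents $[S]\subseteq X_{\rm cert}$.

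For example, let $u$ be the only code, occurring at every stage, let $\FF$ be the cofinite filter on $\omega$, and let $C_n=\{0\}$ for all $n$. Condition (2) fails. Yet the cone of the full tree at $\langle1,0\rangle$ lies below the cone at $\langle1\rangle$, which is a legitimate choice of your $T$, and all its branches lie in $X_{\rm cert}$.

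\textbf{The fix.} You must also thin $T$, as the paper does: at every node $t\supseteq s$ with $u_{|t|}=u$, use the successor set $A\setminus C_{|t|}$. This set is in $\FF$ by non-positivity, so $T$ is a condition. Then every node of every $S\le T$, stem included, avoids $C_n$ at all occurrences of $u$. Hence no strengthening has even one branch accepting $u$.

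\textbf{A smaller point.} Your preliminary claim is false in general. You claim that $[S]\subseteq X_{\rm cert}$ forces either acceptance of $u$ in $\stem(S)$, or some $n\in L(u)$ with $n\ge|\stem(S)|$ at which \emph{all} level-$n$ successor sets lie inside $C_n$. The problem is that a dodging branch controls only the successor set at the node it has actually reached.

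Work in the same cofinite setting with $u_0=u_1=u$, $C_0$ the even numbers, $C_1=\omega\setminus\{0\}$, and $C_n=\varnothing$ at later occurrences of $u$. Take the tree with empty stem whose successor set is $C_1$ above odd $\langle a\rangle$ and full elsewhere. Every branch accepts $u$, but the level-$1$ successor sets above even nodes escape $C_1$.

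You only use the claim when $u$ has no filter-large occurrence beyond the stem. There, dodging at every node is legitimate, so you should state the claim in that restricted form.
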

\begin{proof}
Suppose first that infinitely many codes have no filter-large
occurrence. For any condition choose one of these codes not accepted
in its finite stem. At every later occurrence of that code, the
successor set belongs to $\FF$ but is not contained in $C_n$;
choose a successor outside $C_n$. This produces a branch with no
certificate for the code, so no condition is branchwise successful.

Next suppose $P(u)$ is bounded and no occurrence of $u$ has $C_n=A$.
Choose $l$ beyond every positive occurrence. Since each earlier
$u$-stage has a proper accepted set, choose a stem of length $l$
avoiding all those sets. At each later $u$-stage, $A\setminus C_n$
belongs to $\FF$ by nonpositivity. A tree with that stem and those
successor sets at later $u$-stages has no branch accepting $u$;
no strengthening can be branchwise successful.

Conversely, take a condition $T$ with stem $s$ of length $l$.
Intersect its successors with $C_n$ at every stage $n\ge l$ for
which $C_n\in\FF$, giving a same-stem condition $T^0$.
Let $E$ be the codes not accepted in $s$ with no filter-large
occurrence at or after $l$. This set is finite: such a code either
has no filter-large occurrence or has one before $l$. No code in
$E$ has an occurrence with $C_n=A$, since such a stage would accept
it in $s$ or be filter-large after $l$. Thus $P(u)$ is unbounded
for each $u\in E$. Successively extend the stem inside $T^0$ to
meet a positive accepted set at an occurrence of each code in $E$.
The final cone accepts those finitely many codes in its stem; every
other code is accepted in $s$ or at an enforced filter-large stage.
Every branch therefore belongs to $X_{\rm cert}$.
\end{proof}

\begin{corollary}[Effective tail bounds and visible sufficient payoffs]
\label{cor:effective-certificates}
Suppose in the grid case that a computable function $n\mapsto N_n$
satisfies $E_{N_n}\subseteq C_n$ for every $n$. Then there is a
computable pure-quadrant tree with every branch in $X_{\rm cert}$,
and Selector has a computable winning strategy. The visible payoff
\[
 Y_N=\{h\in(\omega\times2)^\omega:
          \exists r\ \forall n\ge r\ h(n)_0\ge N_n\}
\]
satisfies $\rho_\square^{-\omega}(Y_N)\subseteq X_{\rm cert}$.
\end{corollary}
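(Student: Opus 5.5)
The plan is to build the tree and the strategy explicitly from the computable function $n\mapsto N_n$, and then to reduce the visible-payoff inclusion to a recurrence argument. In the grid instance every $C_n$ is filter-large, because $E_{N_n}\subseteq C_n$ gives $C_n\in\FF_\square$. So $I=\omega$ in Theorem~\ref{thm:branchwise}, and I only need an effective version of that recursive thinning.

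\textbf{Tree.} I take the empty stem, or any fixed computable stem $s$, and let $T$ be the pure-quadrant tree whose successor set at a node $t$ is exactly $E_{N_{|t|}}$. Membership of a finite grid string $v$ in $T$ is decided by checking $\min(v(n))\ge N_n$ for each $n<|v|$, and this is computable since $n\mapsto N_n$ is. The tree is a condition, because each successor set is a final quadrant. Every branch $g$ satisfies $g(n)\in E_{N_n}\subseteq C_n$ for all $n$. Since each code recurs in the schedule, every code is accepted at all of its occurrences, so $[T]\subseteq X_{\rm cert}$. With a nonempty stem $s$, only the stages $n\ge|s|$ are guaranteed, and these still include infinitely many occurrences of each code.

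\textbf{Strategy.} In $G_s(X_{\rm cert})$ Selector answers Builder's threshold $N$ at stage $n=|s|+r$ with the cell $(M,M)$, where $M=\max(N,N_n)$. This answer is legal and computable from $N$ and $n$. It lies in $E_{N_n}\subseteq C_n$, so every play following the strategy is accepted at every stage beyond $s$ and therefore lies in $X_{\rm cert}$.

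\textbf{Visible payoff.} Let $g$ satisfy $\rho_\square\circ g\in Y_N$. Then there is $r$ with $\min g(n)\ge N_n$ for all $n\ge r$, which means $g(n)\in E_{N_n}\subseteq C_n$ for all $n\ge r$. Each code $u$ has an occurrence $n\ge r$ because $u$ recurs infinitely often, so $g$ reads a certificate for $u$ and $g\in X_{\rm cert}$.

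The argument has no real obstacle. The one point needing care is this: $Y_N$ only constrains tails, so the recurrence hypothesis on the schedule is essential for the inclusion. Computability is likewise immediate, because it rests only on the computability of $n\mapsto N_n$ and of the coding of grid cells.
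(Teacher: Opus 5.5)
Your proposal is correct and follows the paper's proof essentially step for step: the pure-quadrant tree with successor set exactly $E_{N_n}$ at level $n$, Selector's diagonal answer $(K,K)$ with $K=\max(M,N_n)$ at stage $|s|+r$, and the tail-plus-recurrence argument for $\rho_\square^{-\omega}(Y_N)\subseteq X_{\rm cert}$. Your remarks on the stem offset and on why recurrence of codes is needed make explicit what the paper leaves implicit.
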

\begin{proof}
Take the tree whose successors at level $n$ are exactly $E_{N_n}$.
Membership is decided by finitely many computable inequalities.
Against Builder's threshold $M$ at stage $n$, Selector chooses
$(K,K)$ with $K=\max(M,N_n)$. Every later stage is accepted.
An eventually adequate visible minimum gives accepted cells at
all sufficiently late stages, and every code recurs infinitely.
This proves the inclusion.
\end{proof}

The observation-transfer theorem applies to $Y_N$ and its pullback.
It need not apply directly to $X_{\rm cert}$: acceptance may inspect
offset parity, offset magnitude, or two different time budgets.
A computable threshold sequence gives a computable witness tree;
it does not make refinement inside an arbitrary noncomputable
condition effective.

\subsection*{A method for universally quantified finite problems}
Let $R(u)$ be an arithmetic property of a finite code. A typical
example is
\[
 R(u)\ \equiv\ \operatorname{Valid}(u)\Longrightarrow F(u)\ge0,
\]
where validity and the integer-valued function $F$ are computable.
This describes inequalities or feasibility bounds for finite
combinatorial structures. Specify a decidable predicate
$\operatorname{Check}(u,w)$ for finite certificates and prove its
soundness
\begin{equation}\label{eq:checker-soundness}
 \forall u,w\ \bigl(\operatorname{Check}(u,w)\Longrightarrow R(u)\bigr).
\end{equation}
A cell is accepted only when its finite computation produces such
a $w$. The next implication does not assume the target conclusion.

\begin{proposition}[Sound strategies prove the target]
\label{prop:strategy-target}
Assume \eqref{eq:checker-soundness}. If Selector has a winning
strategy in the grid threshold game for the simultaneous
certificate payoff, then
$\forall u\,R(u)$. The same conclusion follows from a condition
$T$ with $[T]\subseteq X_{\rm cert}$.
\end{proposition}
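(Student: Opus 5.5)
The plan is to extract from either hypothesis a single branch $g\in X_{\rm cert}$ that exists in the ground universe, and then read the target off the checked certificates along $g$ using \eqref{eq:checker-soundness}. No forcing or genericity is used; the argument is a direct play or a direct branch construction.

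First, suppose Selector has a winning strategy $\sigma$ in $G_s(X_{\rm cert})$ for some stem $s$. Let Builder announce the threshold $0$ at every round. Then $E_0=\omega^2$, so every reply is legal. The play is a genuine sequence $(a_r)_{r<\omega}$, defined by recursion from $\sigma$. Because $\sigma$ is winning, $g=s^\frown(a_r)_{r<\omega}$ belongs to $X_{\rm cert}$. Alternatively, one could invoke Theorem~\ref{thm:game-trees}(2) to obtain a positive tree $L$ with $[L]\subseteq X_{\rm cert}$ and pick a branch of $L$ by dependent choice. A positive tree is nonempty above its stem at every node, so it has a branch. The direct play is simpler, and it is the route I would write.

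Second, suppose instead we are given a condition $T$ with $[T]\subseteq X_{\rm cert}$. Every node above the stem has a successor set in $\FF_\square$. That set is nonempty because the filter is proper, so recursively choosing successors gives a branch $g\in[T]$. Again $g\in X_{\rm cert}$.

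Third, fix an arbitrary code $u$. The definition of $X_{\rm cert}$ gives a stage $n$ with $u_n=u$ and $g(n)\in C_n$. By the definition of $C_n$, the verifier accepted the finite readout of the cell $g(n)$ for the code $u$. By the stated acceptance convention, this means the finite computation at that cell produced some $w$ with $\operatorname{Check}(u,w)$. Soundness \eqref{eq:checker-soundness} then yields $R(u)$. Since $u$ was arbitrary, $\forall u\,R(u)$ holds, at least for every code in $\mathcal C$.

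The step I expect to need the most care is this last bookkeeping: making explicit that every $u$ in the range of the universally quantified statement appears in the schedule $(u_n)$, and that $C_n$ consists exactly of cells whose computation outputs a $w$ with $\operatorname{Check}(u_n,w)$. I would state this as the standing interpretation of "accepted" fixed just before \eqref{eq:checker-soundness}, with $\mathcal C$ taken to be all finite codes. The proof then involves no circularity: $R(u)$ is derived only from a checked $w$ and soundness, never from the target conclusion itself.
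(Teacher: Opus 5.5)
Your proposal is correct and is essentially the paper's own proof: play against Builder's constant zero thresholds (legal because $E_0=\omega^2$) or take any branch of $T$, then apply the checker soundness at an accepted occurrence of each scheduled code, with all reasoning in the ground model. The extra points you make explicit are left implicit in the paper's three-line argument: nonemptiness of $[T]$ via properness of $\FF_\square$, that the quantifier ranges over the scheduled codes, and that only the ``accepted only when'' direction of acceptance is needed.
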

\begin{proof}
Play the grid game against the fixed Builder sequence of zero
thresholds, or
choose a branch of $T$. For each $u$, the resulting branch contains
an accepted readout $w$. Equation~\eqref{eq:checker-soundness}
yields $R(u)$. This reasoning takes place in the ground model.
\end{proof}

A possible proof route is therefore to establish soundness by a
finite local argument, and then prove uniform availability of
accepted instructions by a combinatorial construction, a termination
argument, or an explicit winning strategy. Positivity gives dense
generic success; filter-large availability also gives branchwise
refinement. These existence assertions carry the unresolved
mathematical content. Defining acceptance to mean that $R(u)$ is
true supplies no proof of availability.

For example, suppose finitely many instruction types have total
certificate producers and terminating verifiers. If their finite
runs for code $u_n$ are accepted, choose $N_n$ above all producer
and verifier run times. Then every grid cell in $E_{N_n}$ has
adequate coordinate budgets, whichever type it selects. If totality
and acceptance are proved uniformly, simulation computes such an
$N_n$, and Corollary~\ref{cor:effective-certificates} applies.
The substantive task is that uniform totality-and-acceptance proof;
statistical success rates cannot replace it.

\begin{proposition}[Absoluteness boundary]\label{prop:absoluteness}
For an arithmetic sentence $\varphi$ with natural-number
parameters and any set forcing $P$ with a largest condition over a transitive $V$,
$V\models\varphi$ if and only if
$V\models1_P\Vdash\varphi$. A certificate scheme for a
set-theoretic sentence $\Phi$ transfers an extension-side result
back to $V$ only with a separately proved downward-absoluteness
principle for $\Phi$ in the relevant class of extensions.
\end{proposition}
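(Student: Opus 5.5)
The first assertion is Shoenfield-style but much simpler, because arithmetic sentences are absolute between transitive models with the same $\omega$. The plan has three steps.

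\emph{Step 1: arithmetic truth is absolute.} Let $G$ be $P$-generic over $V$. Then $V$ and $V[G]$ are transitive and have the same ordinals, so $\omega^{V[G]}=\omega^V=\omega$. Arithmetic addition and multiplication are defined by $\Delta_0$ recursions over $\omega$. By induction on the complexity of $\varphi$, with natural-number parameters fixed, I will show that $V\models\varphi$ iff $V[G]\models\varphi$. Quantifiers range over the same set $\omega$ in both models, and atomic formulas are $\Delta_0$ facts about the same hereditarily finite objects. This is the only place transitivity is used, and it must be used genuinely: a nontransitive model could have nonstandard $\omega$.

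\emph{Step 2: pass to the forcing relation.} Suppose $V\models\varphi$. Every generic extension then satisfies $\varphi$ by Step 1. By the forcing theorem (truth lemma), if some $p$ forced $\neg\varphi$, a generic $G\ni p$ would give $V[G]\models\neg\varphi$, which is impossible. Hence no condition forces $\neg\varphi$. Then the set of conditions forcing $\varphi$ is dense, so $1_P\Vdash\varphi$. Conversely, if $1_P\Vdash\varphi$, take any generic $G$; it contains $1_P$, so $V[G]\models\varphi$, and Step 1 returns $V\models\varphi$.

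The existence of generics is the point to phrase carefully. The paper says it uses the usual model language, so I would either work with countable transitive models as in the paper's convention, or argue via the definable forcing relation. In the latter version, $\varphi$ is equivalent to $\check\varphi$ with check names, and for arithmetic $\varphi$, $p\Vdash\varphi$ iff $\varphi$ holds in $V$. This can be proved directly by induction on formulas, since $\dot n=\check n$ for numbers. The number quantifier step uses that $p\Vdash\exists n\,\psi(n)$ iff densely many $q\le p$ force some $\psi(\check k)$. The induction avoids generics altogether and is what I would write in full.

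\emph{Step 3: the second assertion.} This is a boundary remark rather than a theorem. I would justify it by exhibiting the gap. An extension-side verification only yields $1_P\Vdash\Phi$, or truth in $V[G]$. Returning to $V$ requires $V[G]\models\Phi\Rightarrow V\models\Phi$, which is exactly downward absoluteness. For arithmetic $\Phi$, Step 1 supplies it. For general $\Phi$ it can fail. A witness is $\Phi\equiv$ ``there is a real not in $L$'' with $V=L$ and $P$ Cohen forcing; alternatively, by the paper's own Theorem~\ref{thm:offset-degree}, ``there is a real Cohen over a given inner model''. So a separate principle is needed.

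The main obstacle is only foundational: the metamathematical status of $V$ and generic filters. I would handle it by the generic-free forcing-relation induction in Step 2.
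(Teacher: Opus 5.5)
Your proposal is correct and follows the paper's own argument: the paper likewise notes that set forcing preserves $\omega$ and its operations, inducts on arithmetic formulas to get the same truth value in $V$ and every $V[G]$, invokes the forcing theorem, and treats the second assertion as the remark that no such induction covers unrestricted set quantifiers. Your generic-free induction on the forcing relation and your $V=L$/Cohen counterexample are sound refinements that the paper leaves implicit. One small correction: addition and multiplication on $\omega$ are $\Delta_1$-definable rather than $\Delta_0$-definable, but they remain absolute between transitive models of ZFC, so nothing changes.
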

\begin{proof}
Set forcing preserves $\omega$ and the standard operations and
relations on it. Induction on arithmetic formulas therefore
gives the same truth value in $V$ and every $V[G]$, and the
forcing theorem gives the displayed equivalence. No analogous
induction covers unrestricted quantification over sets; a
separate transfer theorem is required there.
\end{proof}

The general workflow is thus a precise sequence of obligations:
give a sound verifier and true premises, prove cofinal positive
accepted cells or a winning strategy, and determine whether filter-large
acceptance also yields branchwise refinement. For an extension-side
proof, identify the absoluteness principle for the target statement. A direct proof may discharge the target before forcing
enters; the forcing framework records uniformity across all
conditions and the informational structure of the resulting oracle.

\section{Oracle interpretation}

The forcing statements above depend on support incidence and the
tree order. To interpret the branch as an oracle-machine instruction
sequence, one must provide a uniform machine that decodes each
alphabet symbol, tests each guard, and reads a code for the associated
complete conditional derivation. For a concrete implementation,
fix a computable coding of cell markers $e_{n,i,j}$ and let $x_g$
be the characteristic function of exactly the markers
$e_{n,g(n)_0,g(n)_1}$, one at each stage. Searching the stage-$n$
markers in $x_g$ computes $g(n)$, and $g$ computes $x_g$; hence
$x_g\equiv_T g\equiv_T m\oplus b\oplus d$ by
\eqref{eq:reconstruct}. This is a coding statement about generic
objects. It does not assert that the markers already carry sound
derivations, that premises become true by forcing, or that an
arbitrary abstract support assignment is a machine implementation.

The quotient order is $\sigma$-centered in $V[H]$. To see this,
put
$D_h=\{d\in X_h:\exists N\ \forall n\ge N\ d(n)=b(n)\}$.
This set is countable. Every nonempty ground-coded fiber meets it:
below any visible condition, lift its stem to a ground grid node,
then thin visible successor tails so that every later symbol uses
its canonical grid lift $(M,M)$ or $(M+1,M)$. Genericity gives a
canonical-offset branch in the fiber. The same construction after
coning at a finite offset prefix shows $D_h\cap K_T(h)$ is
ordinary-dense in $K_T(h)$. For each $d\in D_h$, the quotient
conditions whose fibers contain $d$ form a centered family by
Proposition~\ref{prop:fiber}; these countably many families cover
$R_H$. The countable point set does not make the Boolean density
countable: Theorem~\ref{thm:density} computes it as $\mathfrak d^V$.

Theorem~\ref{thm:ideal}, Corollary~\ref{cor:exact-cover}, and
Theorem~\ref{thm:local} give the ideal and local covering results
used to describe the grid order. Part~II now supplies one concrete
sound readout with a complete arithmetic proof.

\section{Questions}\label{sec:questions}
The following questions are not settled by the present analysis.
\begin{enumerate}
\item What is the exact value of $\non(\II_\square)$ in terms of
standard cardinal characteristics?
\item Does the full offset quotient $R_H$ add a real dominating
every function of $V[H]$? Is its Boolean completion equivalent
to a familiar named forcing?
\end{enumerate}

\part{Wilf's inequality for thirteen left elements}

\section{The theorem and two reading routes}\label{sec:wilf-intro}
Let $S\subseteq\N=\{0,1,\ldots\}$ be a numerical semigroup: a
cofinite additive submonoid. Its multiplicity $m$ is its least
positive element; its conductor $c$ is the least integer with
$c+\N\subseteq S$. A positive element is \emph{primitive} if it is
not a sum of two positive elements of $S$; otherwise it is
\emph{decomposable}. Write
\[
 P(S)=\{x\in S:x>0\text{ primitive}\},\qquad e=|P(S)|,
 \qquad L=S\cap[0,c),\qquad n=|L|.
\]
The embedding dimension $e$ counts primitives above the conductor
as well. Wilf asked whether $c\le en$ for every numerical semigroup
\cite{wilf}. Eliahou and Mar\'in-Arag\'on proved it for $n\le12$
\cite{eliahou-marin}. Here the finite arithmetic statement is
\begin{theorem}[Thirteen left elements]\label{thm:main}
If $S$ is a numerical semigroup and $|S\cap[0,c)|=13$, then
$c\le13e$.
\end{theorem}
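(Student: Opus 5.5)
The approach follows the route the paper announces: a direct arithmetic proof that combines structural reductions with a bounded exact computation. The forcing apparatus (Theorem~\ref{thm:branchwise}, Corollary~\ref{cor:effective-certificates}) only organizes the certificates afterwards, and Proposition~\ref{prop:absoluteness} confirms that nothing arithmetic is gained from it. So fix $S$ with $n=|L|=13$, multiplicity $m$ and conductor $c$. Write $c=qm-\rho$ with $0\le\rho<m$, so $q=\lceil c/m\rceil$. The proof splits into a set of regimes, each closed by a known sufficient condition for Wilf, plus a finite residue.

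First I would dispose of the easy regimes using results from the literature. Wilf holds when $e\ge n$ or $e=m$ (trivially $c\le m n$ is false in general, but $e=m$ is covered by Sammartano/Kaplan-type arguments). It also holds when $q\le 3$ (Eliahou's theorem for $c\le 3m$) and when $m\le 18$ (the Bruns--Garc\'ia-S\'anchez--O'Neill--Wilburne computation). Next I would use Eliahou's inequality $W_0(S)=|P\cap L|\,n-q|D_q|+\rho\ge0$. Here $D_q$ is the set of decomposables in $[c,c+m)$. This yields Wilf whenever $3|P\cap L|\ge q$. The counting step behind it is that the $n$ left elements, each lying in a level interval $I_j=[jm-\rho,(j+1)m-\rho)$, force $q\le n$ and in fact $m\le c/(q-1)$.

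Since $L$ meets each of the $q$ intervals below $c$ and contains $0$ and $m$, we have $q\le 13$. Combined with $q\ge4$ this leaves $4\le q\le13$. In that range I would show $e\ge\lceil c/13\rceil$ by bounding $c$ in terms of $m$ and the number $t=|P\cap L|$ of left primitives. The decomposable left elements are sums of at most $q$ left primitives. Hence the left set is determined by $t$ generators below $c$, and $n=13$ with small $t$ forces $c$ small relative to $m$. Meanwhile every element of $[c,c+m)$ not a sum of two left elements is primitive, so $e\ge t+m-|D_q|$.

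The residue, where $t$ is small (say $t\le q/3$) and $m$ is large, I would reduce to a finite enumeration. One enumerates the configurations of $L$ up to translation in the Kunz-coordinate sense: the residues modulo $m$ of the left elements and their levels. With $13$ elements and bounded $q$, only finitely many combinatorial types occur, independent of $m$ once $m$ exceeds a computable bound. For each type, $c\le 13e$ becomes a linear inequality in $m$ and $\rho$ that can be checked symbolically. Each check produces a certificate $w$ passing a decidable $\operatorname{Check}(u,w)$, with soundness \eqref{eq:checker-soundness} proved once. The main obstacle is this last step: showing that the type enumeration is genuinely finite and uniform in $m$, i.e.\ that large $m$ behaves like a stabilized polyhedral family. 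I would attack it through Kunz polyhedron faces, proving that for $m$ beyond an explicit bound every configuration is a translate of one with smaller $m$, so the exhaustive bounded computation covers everything.
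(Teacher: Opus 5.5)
The proposal has a genuine gap at exactly the step you call the main obstacle. After the cited results, the remaining case is closed only by a promised enumeration of Kunz-type configurations. You do not establish that this enumeration is finite or uniform in $m$. The claim that for large $m$ every configuration is a translate of one with smaller $m$ is unproved, and translation does not change $m$.

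Large $m$ is not actually the difficulty. Every element of $D_q$ is a sum of two of the twelve positive left elements, so $|D_q|\le\binom{13}{2}=78$. Then $e=t+m-|D_q|$ gives $13e\ge 13m-1001\ge qm\ge c$ whenever $q\le 12$ and $m\ge 1001$. What is missing is a bound on the decomposables that is sharp for moderate $m$. Your sketched route to $e\ge\lceil c/13\rceil$ does not supply one. The claim that small $t$ forces $c$ to be small relative to $m$ cannot hold, since $q\ge4$ already gives $c>3m$.

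Two statements are also wrong as written. First, $W_0(S)\ge0$ is not an unconditional inequality; only Eliahou's conditional version under $3|P\cap L|\ge q$ is available. Second, $c\le mn$ is always true: $0,m,\dots,(q-1)m\in L$ gives $c\le qm\le nm$. That is precisely why the case $e=m$ is trivial.

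The missing idea is the Ap\'ery-level count the paper uses. By \eqref{eq:embedding} one has $m=e+\delta$. This is equivalent to your identity, since $|D_q|=\delta+t$. By \eqref{eq:defect}, $c\le13e$ is then equivalent to $q\delta\le(13-q)e+\rho$. Each decomposable Ap\'ery element is $x+y$ with $x,y\in T=A\cap(0,c)$ and $w(x)+w(y)\ge q-1$, where $w=q-\lambda$. Lemma~\ref{lem:weights} gives $\sum_{x\in T}w(x)=13-q$.

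After Eliahou's criterion, your residue is exactly $q\ge7$. Indeed, $t=1$ forces $L=\{0,m,\dots,(q-1)m\}$ and hence $q=13$. So for $4\le q\le 6$ you have $t\ge2$, and in all cases $q\le6$ gives $3t\ge q$. For $q\ge7$ the weight count gives $\delta\le3$ when $q=7$, $\delta\le1$ when $q\in\{8,9\}$, and $\delta=0$ when $q\ge10$ (Proposition~\ref{hd:depth-seven-up}). This settles $e\ge4$. When $e\le3$, it forces $m=e+\delta\le6$, which lies inside the $m\le18$ range you cite.

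Repaired this way, your argument is a legitimate alternative to the paper's. It replaces the paper's self-contained treatment of $q\le6$ by Eliahou's theorem. That treatment consists of Theorem~\ref{sd:theorem}, the graph and cubic-shadow analysis behind Proposition~\ref{qf:main}, and Propositions~\ref{hd:depth-five} and~\ref{hd:depth-six}. It also replaces the printed bounded evaluation for $e\le3$ by an external computation. Your route is shorter but rests on two substantial outside results. The paper's proof is complete apart from its own exhibited finite check.
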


There are two reading routes. Readers interested in forcing can use
Part~I for the order, projections, games and certificate principles,
then read the present application; the appendix proves the arithmetic
premise on which its certificates depend. Numerical-semigroup readers
can start with this section and proceed directly to the arithmetic
appendix. That proof uses the definitions above and its own lemmas,
without forcing or Machine Calculus prerequisites. Its four cases are
$e\le3$, $q\le3$, $q=4$ with $e\ge4$, and $q\ge5$ with $e\ge4$,
where $q=\lceil c/m\rceil$; the first uses a bounded exact
evaluation. Theorem~\ref{thm:main} follows from those arithmetic
cases before any forcing argument. The forcing analysis describes
dependency, persistence, and uniform certificate selection.

The appendix uses Ap\'ery-level counts, graphs of additive
decompositions and a small cubic shadow bound. Eliahou's work
relates Wilf's problem to level counts and Hilbert functions
\cite{eliahou-macaulay}, and to graphs of Ap\'ery sums
\cite{eliahou-graph}. These provide context for the methods;
all lemmas needed for the present arithmetic proof are proved
in the appendix.

\section{Finite codes and proof supports}\label{sec:wilf-support}
Write $C(S)$ for $c(S)\le13e(S)$. Every numerical semigroup has a
finite gap set $F=\N\setminus S$. Fix the coding
$F_u=\{x:\text{the }x\text{-th binary digit of }u\text{ is }1\}$,
and put $S_u=\N\setminus F_u$ for every $u$. Validity is
decidable: with the candidate conductor $c=0$ for
$F_u=\varnothing$ and $c=1+\max F_u$ otherwise, check
$0\notin F_u$ and closure for $x,y<c$ with $x,y\notin F_u$
and $x+y<c$. All larger sums belong to $S_u$ automatically.
For a valid code with $c>0$, find $m$ by bounded search; every
primitive lies below $c+m$, since $z\ge c+m$ decomposes as
$m+(z-m)$. For $S_u=\N$, use $m=e=1$ and $c=n=q=0$.
For invalid codes set the invariant outputs $c,n,e,q,m$ to zero.
Write $\operatorname{NS}(u)$ for the validity test and
$c(u),n(u),e(u),q(u),m(u)$ for these total computable outputs. A
\emph{finite arithmetic trace}
lists membership and primitive flags in these bounded ranges
with their checked counts.

Specialize the finite support kernel of Part~I to the following
arithmetic claims. Write $U_{\rm ar}=\mathcal P\sqcup\mathcal E$
for its finite premise-and-artifact token space. For
$W\subseteq U_{\rm ar}$, a complete derivation is available when
its support lies in $W$; it is sound for $S$ when its premise
formulas hold for $S$. Equation~\eqref{eq:support-equation} gives
the syntactic activation test, with truth checked separately.

Let $J=\{e,q,4,h\}$ and define predicates
\[
 H_e:(e\leq3),\quad H_q:(q\leq3),\quad
 H_4:(e\geq4\land q=4),\quad H_h:(e\geq4\land q\geq5).
\]
For an $S$ with $n=13$, let $v$ state $C(S)$ and $v_j$ state
$H_j(S)\Rightarrow C(S)$. The appendix supplies four checked
conditional derivations $i_j$ of $v_j$ under the premise
$p_n:(n=13)$:
\[
\begin{array}{c|c}
j&\text{arithmetic result}\\ \hline
e&\text{Proposition~\ref{se:main}}\\
q&\text{Theorem~\ref{sd:theorem}}\\
4&\text{Proposition~\ref{qf:main}}\\
h&\text{Theorem~\ref{hd:theorem}}
\end{array}
\]
The artifact $a_j$ applies $v_j$ using $p_j:H_j(S)$, and $j_*$
joins the four implications by the exhaustive case split. The
same finite diagram applies uniformly to every $S$; only the
truth values of its guards change.

\begin{proposition}[Support persistence]\label{prop:persistence}
The minimal complete supports for $v$ are
\[
 G=\{p_n,j_*,i_e,i_q,i_4,i_h\},\qquad
 B_j=\{p_n,i_j,p_j,a_j\}\quad(j\in J).
\]
For $J(S)=\{j\in J:H_j(S)\}$ and available resources
$W\subseteq U_{\rm ar}$, a registered sound derivation of $C(S)$
in this kernel survives exactly when
\begin{equation}\label{eq:persistence}
 [G\subseteq W]\ \lor\ \bigvee_{j\in J(S)}[B_j\subseteq W].
\end{equation}
Equivalently, a withdrawal $D=U_{\rm ar}\setminus W$ destroys
all registered sound derivations exactly when $D$ meets $G$ and every $B_j$
with $j\in J(S)$.
\end{proposition}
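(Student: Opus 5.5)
The plan is to first write the finite kernel out explicitly and then read off the minimal supports by direct inspection, using the support equation \eqref{eq:support-equation}.

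\textbf{The kernel.} The premises are $p_n$ and $p_j$ for $j\in J$. The claims are $v$ and $v_j$. The artifacts and their guards are as follows.
\begin{itemize}
\item $i_j$ has $A=\{p_n\}$, $B=\varnothing$ and conclusion $v_j$.
\item $a_j$ has $A=\{p_j\}$, $B=\{v_j\}$ and conclusion $v$.
\item $j_*$ has $A=\varnothing$, $B=\{v_e,v_q,v_4,v_h\}$ and conclusion $v$.
\end{itemize}
For $j_*$, exhaustiveness of $e\le3$, $q\le3$, $q=4\land e\ge4$, $q\ge5\land e\ge4$ is a tautology of integer arithmetic. It therefore needs no premise token, which is why $G$ contains no $p_j$.

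Since $p_n$ guards each $i_j$, I would place $p_n$ in $A_{j_*}$ or simply note that it enters through the expansions of the $i_j$. Either convention gives $G$ as stated.

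\textbf{Enumerating the supports.} A complete derivation tree for $v$ has a root artifact concluding $v$, which is either $j_*$ or some $a_j$.
\begin{itemize}
\item If the root is $j_*$, each prerequisite $v_j$ must be expanded. The only artifact concluding $v_j$ is $i_j$, so the support is forced to be exactly $G$.
\item If the root is $a_j$, the prerequisite $v_j$ is expanded by $i_j$, giving exactly $B_j$.
\end{itemize}
The derivations are non-circular, so every tree has one of these shapes. The supports are therefore precisely $G$ and the $B_j$.

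Minimality is then immediate. No $B_j$ is contained in $G$, since $p_j,a_j\notin G$. $G$ is not contained in any $B_j$, since $j_*\in G$. No $B_j$ is contained in $B_{j'}$ for $j\ne j'$.

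\textbf{Activation and soundness.} By \eqref{eq:support-equation}, $v\in\operatorname{Act}(W)$ exactly when $G\subseteq W$ or some $B_j\subseteq W$. A registered derivation is \emph{sound} for $S$ when all its premise formulas hold. Restricting to $n=13$, where $p_n$ is true, the conditions are:
\begin{itemize}
\item the $G$-tree has only the premise $p_n$, so it is always sound;
\item the $B_j$-tree is sound iff $H_j(S)$ holds, i.e.\ iff $j\in J(S)$.
\end{itemize}
Here the checked conditional derivations $i_j$ are supplied by the cited appendix results. These combine to give \eqref{eq:persistence}. The withdrawal formulation is its De Morgan dual with $D=U_{\rm ar}\setminus W$.

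\textbf{Main obstacle.} The only delicate point is the bookkeeping convention for where $p_n$ sits and whether $j_*$ needs premise tokens. The rest is a finite case check.
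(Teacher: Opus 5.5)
Your proposal is correct and takes the same route as the paper. Both classify complete derivations of $v$ by their root artifact ($j_*$ or some $a_j$), expand each prerequisite $v_j$ through its unique artifact $i_j$ to obtain $G$ or $B_j$, use the truth of $p_n$ and the premise $p_j$ to restrict the disjunction to $J(S)$, and negate to get the hitting condition. Your explicit guard table and pairwise-incomparability check also spell out the minimality claim, which the paper leaves implicit.
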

\begin{proof}
Every derivation of $v$ ends with $j_*$ or one $a_j$. The first
choice requires all four $i_j$ and $p_n$, giving $G$; the second
requires $i_j,p_n,p_j,a_j$, giving $B_j$. There are no other
artifacts with conclusion $v$. The global case split uses only
the true premise $n=13$; a local application uses a true premise
exactly when $j\in J(S)$. Equation~\eqref{eq:persistence}
follows, and negating it gives the hitting condition.
\end{proof}

The fixed tokens in $G$ and $B_j$ describe semantic proof
availability. Stage-and-cell markers below are fresh copies of
instructions. Repeating one fixed finite support at every cell
would allow a single withdrawal to delete an entire quadrant.

\section{Quadrant readout and uniform Wilf certificates}
\label{sec:wilf-certificate}
Use the grid order $\PP_\square$ of Part~I. Throughout the full
two-label construction, set $W=U_{\rm ar}$, so both the global and
every true local proof template are available. Semantic withdrawals
will be considered after the certificate theorem. At stage $n$, write
$z_{n,i,j}$ for the marker $e_{n,i,j}$ in the stage support of
Section~\ref{sec:quadrant}; thus
$s_{n,\square}(i,j)=\{r_{n,\min(i,j)},z_{n,i,j}\}$.
Its finite-withdrawal filter is $\FF_\square$. The visible
observation is $\rho_\square(i,j)=(\min(i,j),\mathbf1_{i>j})$.
The parity readout
\[
 \theta(i,j)=\bigl(\min(i,j),\mathbf1_{i>j},|i-j|\bmod2\bigr)
\]
is the $k=1$ residue projection of Corollary~\ref{cor:noncohen}.
It factors through the finite-label order of
Theorem~\ref{thm:phase}; over the visible extension its label
factor is Cohen. This is a forcing statement about proof choices,
not an arithmetic derivation.

Fix a computable pairing bijection
$\langle\cdot,\cdot\rangle:\N^2\to\N$ and set
$u_{\langle u,t\rangle}=u$, so every code recurs infinitely often.
At stage $n$ the query is
\[
 R_{13}(u_n):\quad
 \neg\bigl(\operatorname{NS}(u_n)\land n(u_n)=13\bigr)
 \quad\lor\quad c(u_n)\le13e(u_n).
\]
For a valid thirteen-left-element code, let $j(u_n)$ be the first
true predicate among $H_e,H_q,H_4,H_h$. Parity $0$ requests the
global derivation with support $G$; parity $1$ requests the local
derivation with support $B_{j(u_n)}$. Outside the antecedent, both
parities request a finite certificate of that fact. The two
instructions remain distinct even when their conclusions coincide.

A cell $(i,j)$ runs the requested arithmetic-trace producer for
at most $i$ steps. If the producer finishes, its output is passed
to the fixed verifier, which starts a fresh budget of $j$ steps.
A timeout at either step rejects the cell. The producer computes the finite code predicates,
$c,n,e,q$, and their bounded membership and primitive tables.
For a valid code with $n(u_n)=13$, it attaches the appropriate
appendix proof template.
The verifier checks the finite calculations and the hypotheses of
the selected fixed template. It accepts a certified antecedent failure
or a completed template application. Each template is bound to its
proved appendix argument; this supplies verifier soundness.
All these operations are computable. For each fixed $u$ and parity,
if $R_{13}(u)$ holds, the producer and verifier terminate with an
accepted certificate; if $R_{13}(u)$ fails, verifier soundness prevents
acceptance. The first assertion uses the appendix's four cases and
finite evaluator, not genericity. Let $C_n\subseteq\omega^2$ be
the accepted cells at stage $n$. This set is fixed in the ground
model by the producer and verifier.

\begin{center}
\begin{tikzpicture}[>=Stealth,
 box/.style={draw,rounded corners,align=center,text width=43mm,
 minimum height=15mm,inner sep=4pt,font=\small}]
\node[box] (arith) at (0,0) {Four proved arithmetic cases\\sound proof templates};
\node[box] (run) at (5.3,0) {Producer and verifier\\terminate successfully\\$C_n\supseteq E_{N_n}$};
\node[box] (tree) at (10.6,0) {Same-stem refinement\\every branch succeeds};
\draw[->] (arith) -- (run);
\draw[->] (run) -- (tree);
\end{tikzpicture}
\end{center}
The first arrow supplies arithmetic truth and termination; the
second organizes those already sound certificates along branches.

\begin{lemma}[Cofinal certificate cells]\label{lem:cofinal}
For each $n$, the following are equivalent:
\begin{enumerate}
\item $R_{13}(u_n)$ holds;
\item there is $N_n$ such that every $(i,j)\in E_{N_n}$ gives an
accepted certificate for its parity-selected instruction;
\item some cell gives an accepted certificate.
\end{enumerate}
\end{lemma}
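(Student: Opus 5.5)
We prove the cycle of implications (1)$\Rightarrow$(2)$\Rightarrow$(3)$\Rightarrow$(1). Two earlier facts do the work: the fixed-template soundness of the verifier, and the appendix arithmetic results cited in Section~\ref{sec:wilf-support}, namely Proposition~\ref{se:main}, Theorem~\ref{sd:theorem}, Proposition~\ref{qf:main} and Theorem~\ref{hd:theorem}. No forcing is used; the statement concerns only the fixed ground-model sets $C_n$.

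\emph{(1)$\Rightarrow$(2).} Assume $R_{13}(u_n)$ and split on the antecedent.
\begin{enumerate}
\item If $\neg(\operatorname{NS}(u_n)\land n(u_n)=13)$, the producer computes the total computable invariants of Section~\ref{sec:wilf-support}. It outputs a finite trace certifying antecedent failure, and the verifier checks it. Both computations halt.
\item Otherwise $u_n$ codes a valid semigroup with thirteen left elements. The predicates $H_e,H_q,H_4,H_h$ are exhaustive, since $e\le3$, or $e\ge4$ with $q\le3$, $q=4$ or $q\ge5$. Hence $j(u_n)$ is defined. For parity~$1$ the producer attaches the template $B_{j(u_n)}$, whose hypotheses hold by the choice of $j(u_n)$. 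For parity~$0$ it attaches the global template $G$, whose only premise is $n=13$. Each template's finite checks are computations on bounded tables, so both producer and verifier terminate.
\end{enumerate}
There are exactly two parity-selected instructions. Let $p_0,p_1$ be the producer run times and $v_0,v_1$ the verifier run times, and set $N_n=\max\{p_0,p_1,v_0,v_1\}+1$. Any $(i,j)\in E_{N_n}$ gives the producer at least $i\ge N_n$ steps and the verifier a fresh budget of $j\ge N_n$ steps. Whichever parity $|i-j|\bmod2$ selects, both runs therefore complete and the cell is accepted. This gives (2).

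\emph{(2)$\Rightarrow$(3)} is immediate, because $E_{N_n}$ is nonempty.

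\emph{(3)$\Rightarrow$(1).} An accepted cell yields a verified certificate. This is either a certified antecedent failure, which gives $R_{13}(u_n)$ directly, or a completed application of a fixed template. By the soundness of that template, which is the corresponding appendix result together with the support equation~\eqref{eq:support-equation}, the conclusion $c(u_n)\le13e(u_n)$ follows. In either case $R_{13}(u_n)$ holds.

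The main obstacle is the termination claim in (1)$\Rightarrow$(2), specifically the parity-$0$ global template. It must succeed for every valid code, not only those matching one case. This requires that the appendix arguments are bounded finite checks, including the bounded exact evaluation in the $e\le3$ case, so that the verifier halts uniformly. I would handle this by appealing to the finiteness of the traces: all primitives lie below $c+m$. The uniform bound $N_n$ then needs only a maximum over two halting runs, with no uniformity in $n$ required.
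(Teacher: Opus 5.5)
Your proof is correct and follows the paper's argument. When $R_{13}(u_n)$ holds, the two requested producer--verifier runs terminate, and choosing $N_n$ above their finitely many run times makes both coordinate budgets suffice for either parity; (2)$\Rightarrow$(3) is trivial, and verifier soundness gives (3)$\Rightarrow$(1). The ``obstacle'' you flag is not really one: the parity-$0$ verifier only checks the finite hypothesis $n(u_n)=13$ of the fixed template $G$, so the appendix's bounded evaluation enters through the soundness of the template in (3)$\Rightarrow$(1), not through termination.
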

\begin{proof}
If $R_{13}(u_n)$ holds, both requested producers and verifiers
terminate. Choose $N_n$ above their finitely many run times.
Both coordinate budgets then suffice for either parity, proving
(2). The implication (2)$\Rightarrow$(3) is immediate, and
verifier soundness gives (3)$\Rightarrow$(1).
\end{proof}

Write
\begin{equation}\label{eq:accepted-payoff}
 X_{\rm acc}=\bigcap_{u\in\N}\bigcup_{n:u_n=u}
 \{g\in(\omega^2)^\omega:g(n)\in C_n\}.
\end{equation}
This is a $G_\delta$ payoff. A branch belongs to it precisely
when it reads an accepted certificate for every finite-gap code.

\begin{theorem}[Branchwise Wilf certificates]\label{thm:wilf-branchwise}
Assume the four arithmetic implications proved in the appendix.
Then every $T\in\PP_\square$ has a same-stem refinement $T^*\le T$
such that every $g\in[T^*]$ is accepted at every stage beyond
the stem. In particular,
\[
 \{T\in\PP_\square:[T]\subseteq X_{\rm acc}\}
 \quad\text{is dense in }\PP_\square.
\]
There is a computable pure-quadrant tree with body contained in
$X_{\rm acc}$, and Selector has a computable winning strategy
for this payoff from every finite stem.
For each code $u$, the conditions whose stems include an accepted
$u$-cell form an open dense set $D_u$. Thus every generic branch
reads a sound certificate of $R_{13}(u)$ for every code, and
\[
 1_{\PP_\square}\Vdash
 \forall S\,(S\text{ a numerical semigroup}\land n(S)=13
 \Rightarrow c(S)\le13e(S)).
\]
\end{theorem}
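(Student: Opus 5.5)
The plan is to reduce everything to the general certificate results of Part~I, with Lemma~\ref{lem:cofinal} supplying the filter-large hypothesis. First, from the four appendix implications (Propositions~\ref{se:main}, \ref{qf:main} and Theorems~\ref{sd:theorem}, \ref{hd:theorem}) together with the exhaustive case split on $e\le3$, $q\le3$, $q=4$, $q\ge5$, I obtain $R_{13}(u)$ for every code $u$. Invalid codes and codes with $n(u)\ne13$ satisfy the first disjunct, which the producer certifies by its bounded finite trace. Hence Lemma~\ref{lem:cofinal}(1) holds at every stage $n$. By (2) of that lemma, each $C_n\supseteq E_{N_n}$, so $C_n\in\FF_\square$ for all $n$. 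In the notation of Theorem~\ref{thm:branchwise}, this gives $I=\omega$.

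Next I apply Theorem~\ref{thm:branchwise} with $\FF=\FF_\square$, which is free, and with the schedule $u_{\langle u,t\rangle}=u$. Every code then recurs on an unbounded subset of $I$. This yields, for each $T$, a same-stem $T^*\le T$ all of whose branches are accepted at every stage $\ge|\stem(T)|$. Such branches lie in $X_{\rm acc}$, because each code recurs beyond the stem, and density follows.

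For the computable tree and strategy I use Corollary~\ref{cor:effective-certificates}. The one genuine point is that $n\mapsto N_n$ must be computable. I take $N_n$ to be one more than the maximum of the actual run times of the two parity-selected producers and of the verifiers on their outputs. These runs are simulated, and the simulation terminates because $R_{13}(u_n)$ holds and the producers are total on true instances. This is the main obstacle: it needs the appendix arguments to give \emph{uniform} termination of the producers, including the bounded exact computation in the $e\le3$ case, and not merely truth. I would check that each template's hypotheses are decided by the finite tables. Then the search halts for every code, and monotonicity in the budgets $i,j$ gives $E_{N_n}\subseteq C_n$. The corollary then supplies the tree and Selector's strategy $(K,K)$ with $K=\max(M,N_n)$, from any finite stem.

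Openness and density of $D_u$ follow from Theorem~\ref{thm:certificates}, since filter-large sets are positive, so every generic branch meets each $D_u$. For the forcing statement, Proposition~\ref{prop:strategy-target} applied to a branch of $T^*$ gives $\forall u\,R_{13}(u)$ in $V$. Every numerical semigroup with $n=13$ is $S_u$ for the code of its finite gap set. This is an arithmetic ($\Pi^0_1$) sentence, so Proposition~\ref{prop:absoluteness} gives $1_{\PP_\square}\Vdash$ it. Equivalently, a generic branch reads a sound certificate in $V[G]$ for each code.
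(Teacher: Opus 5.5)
Your proposal is correct and follows the paper's proof essentially step for step. The appendix gives every $R_{13}(u_n)$, Lemma~\ref{lem:cofinal} makes each $C_n$ filter-large, and Theorems~\ref{thm:branchwise} and~\ref{thm:certificates} together with Corollary~\ref{cor:effective-certificates} (with $N_n$ computed by running the terminating producer--verifier pairs) deliver the refinement, the open dense sets $D_u$, the computable tree, and Selector's strategy $(K,K)$.

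The one variation is the final forcing clause. The paper obtains it from the generic branch meeting each $D_u$, together with verifier soundness and the absence of new codes, whereas you transfer the ground-model truth of the $\Pi^0_1$ sentence by Proposition~\ref{prop:absoluteness}. Both routes are valid, but your appeal to Proposition~\ref{prop:strategy-target} there is redundant, since you already had $\forall u\,R_{13}(u)$ from the appendix in your first step.
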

\begin{proof}
The appendix proves every $R_{13}(u_n)$. By Lemma~\ref{lem:cofinal},
for every stage $n$ there is $N_n$ with $E_{N_n}\subseteq C_n$.
Thus $C_n\in\FF_\square$ at all stages. The general branchwise
refinement theorem, Theorem~\ref{thm:branchwise}, now supplies
below every $T$ a same-stem strengthening $T^*$ whose every
branch is accepted at every stage beyond its stem. It also gives
$[T^*]\subseteq X_{\rm acc}$. The positive-certificate theorem,
Theorem~\ref{thm:certificates}, gives each $D_u$ open dense.

Against Builder's threshold $M_r$ at stage $n=|s|+r$,
Selector chooses $(K,K)$ with $K=\max(M_r,N_n)$. Every resulting
branch lies in $X_{\rm acc}$, proving the game claim directly.
The accepting bound $N_n$ can be computed by running the two
terminating producer-verifier pairs for $u_n$ to completion;
Corollary~\ref{cor:effective-certificates} therefore also gives
a computable pure-quadrant certificate tree.

Genericity meets each $D_u\in V$ and verifier soundness gives
$R_{13}(u)$ for each old code. Set forcing adds no natural-number
codes; each numerical semigroup in an extension still has a
finite gap code. The forcing theorem gives the displayed
universal sentence.
\end{proof}

The direct proof of each $R_{13}(u)$ is already supplied by the
appendix and the accepted finite readout. Genericity packages
these certificates on one branch; branchwise refinement is
stronger because \emph{every} branch of a refined condition
succeeds. The accepting tail inclusion, rather than a cutoff
probability, is the decisive premise. The payoff $X_{\rm acc}$
may inspect offset parity and separate coordinate time budgets,
so the observation winner-transfer theorem need not apply to it
directly. The visible sufficient payoff of
Corollary~\ref{cor:effective-certificates} does apply. The visible
minimum is a sufficient lower bound on both execution budgets,
while offset parity selects the proof template. Thus a visible
observation can guarantee successful execution while forgetting
which proof was chosen.

The local tokens $z_{n,i,j}$ can carry distinct copies of each
instruction and its trace. Withdrawing a semantic proof resource
from $U_{\rm ar}$ is different from withdrawing stage markers:
Equation~\eqref{eq:persistence} determines which global or local
templates remain soundly available. The full two-label projection
uses all semantic resources; arbitrary semantic withdrawals need
not preserve both proof labels.

\section{Arithmetic absoluteness and oracle information}
\label{sec:wilf-comparison}
Let $W_{13}$ denote the universal sentence of
Theorem~\ref{thm:main}. Its finite-code form is
\begin{equation}\label{eq:pi01}
 \forall u\in\N\ \bigl[\operatorname{NS}(u)\land n(u)=13
 \Rightarrow c(u)\le13e(u)\bigr].
\end{equation}
The bracketed predicate is decidable by bounded calculations,
so this is a $\Pi^0_1$ arithmetic sentence. By
Proposition~\ref{prop:absoluteness}, for every set forcing $P$
with a largest condition over a transitive $V$,
\[
 V\models W_{13}\quad\Longleftrightarrow\quad
 V\models1_P\Vdash W_{13}.
\]
A finite gap code witnessing failure in $V$ remains a
counterexample in every generic extension. The forcing theorem
in Theorem~\ref{thm:wilf-branchwise} therefore presents an
independently proved arithmetic result; it cannot manufacture
the inequality.

For a generic grid branch $g$, let $h=\rho_\square\circ g$ and
$d(n)=|g(n)_0-g(n)_1|$. With computable stage-marker coding,
Theorem~\ref{thm:offset-degree} gives
$x_g\equiv_Tg\equiv_Th\oplus d$, with $h$ and $d$ Turing
incomparable. The proof-choice bits $d\bmod2$ retain less than
the full offset, although their sequence is Cohen over $V[h]$.
These oracle-degree facts do not measure the difficulty of
Wilf's inequality: the finite producer and verifier already
generate both requested proof-label types from a code using
the arithmetic derivations in the appendix.
\clearpage
\appendix
\noindent\textbf{Arithmetic proof and exact finite evaluation.}
The following sections prove all four implications used above.
Their only computer-assisted step is the exhaustive bounded
evaluation of Lemma~\ref{se:finite-check}; its mathematical
reduction, algorithms, loop bounds, and tallies are printed here.
\medskip
\section{Ap\'ery sets and weighted level counts}\label{sec:foundations}

Throughout the proof of Theorem~\ref{thm:main}, $n=13$. Consequently $c>0$:
if $c=0$, then $S=\mathbb N$ and $L$ is empty. Define
\[
 A=\operatorname{Ap}(S,m)=\{x\in S:x-m\notin S\}.
\]
Here $S$ is viewed as a subset of $\mathbb Z$ when testing $x-m\notin S$.
Equivalently, $x\in A$ means that no $y\in S$ satisfies $x=y+m$;
this convention fixes the meaning of subtraction in the Apéry definition.
Let
\[
 D=\{x\in A:x\text{ is decomposable in }S\},\qquad
 \delta=|D|,\qquad T=A\cap(0,c).
\]
Thus $D$ denotes only the decomposable \emph{Ap\'ery} elements, not all
decomposable elements of $S$.

\begin{lemma}\label{lem:apery}
The following properties hold.
\begin{enumerate}
 \item $A$ contains one representative of each residue class modulo $m$;
       in particular $|A|=m$, and $A\subseteq[0,c+m)$.
 \item $P=\{m\}\mathbin{\dot\cup}(P\cap A)$ and
       $A=\{0\}\mathbin{\dot\cup}(P\setminus\{m\})
       \mathbin{\dot\cup}D$. Hence
       \begin{equation}\label{eq:embedding}
                            m=e+\delta.
       \end{equation}
 \item Every element of $S$ is a sum of primitive elements.
 \item If $z=a+b\in A$ with $a,b\in S$ positive, then $a,b\in T$.
 \item Every $x\in S$ has a unique expression $x=a+km$ with
       $a\in A$ and $k\in\mathbb N$.
\end{enumerate}
\end{lemma}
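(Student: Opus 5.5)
I would prove the five items in the order (1), (5), (3), (2), (4). This order lets the residue structure feed the decomposition facts, and those feed the count \eqref{eq:embedding}.

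For (1), fix a residue $r$ modulo $m$. The class $r+m\mathbb Z$ meets $S$, since $S$ is cofinite. Let $a_r$ be its least element in $S$. Then $a_r-m\notin S$, so $a_r\in A$. Any other element $x$ of $S$ in the class satisfies $x>a_r$ and $x-m\ge a_r$. Every element of the class lying in $S$ above $a_r$ has the form $a_r+km$; since $S$ is closed under adding $m$, this gives $x-m\in S$. Hence $a_r$ is the unique Ap\'ery element of its class, and $|A|=m$. If $x\ge c+m$, then $x-m\ge c$ lies in $S$, so $x\notin A$; thus $A\subseteq[0,c+m)$. Item (5) follows at once. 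For $x\in S$, subtract $m$ repeatedly while the result stays in $S$. This ends at the least element $a$ of the class, which lies in $A$, and uniqueness comes from $|A\cap(r+m\mathbb Z)|=1$. For (3), use strong induction on $x>0$: either $x$ is primitive, or $x=y+z$ with $0<y,z<x$ in $S$, and the induction hypothesis applies to $y$ and $z$.

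For (2), first note that $m$ is primitive, since every positive element of $S$ is at least $m$. Next, a primitive $p\ne m$ lies in $A$. Otherwise $p-m\in S$, and $p-m>0$ because $p\ne m$ and $p\ge m$; then $p=m+(p-m)$ would be decomposable. Also $m\notin A$, since $m-m=0\in S$. This gives $P=\{m\}\mathbin{\dot\cup}(P\cap A)$. Every nonzero element of $A$ is either primitive or decomposable, and a primitive element of $A$ is not $m$. This yields the partition $A=\{0\}\mathbin{\dot\cup}(P\setminus\{m\})\mathbin{\dot\cup}D$. Counting with $|A|=m$ gives $m=1+(e-1)+\delta=e+\delta$.

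For (4), let $z=a+b\in A$ with $a,b>0$ in $S$. If $a-m\in S$, then $z-m=(a-m)+b\in S$, contradicting $z\in A$. So $a\in A$, and by symmetry $b\in A$. It remains to show $a<c$. If $a\ge c$, then $b\ge m$ gives $z-m=a+(b-m)\ge c$, so $z-m\in S$, again a contradiction. So $a\in(0,c)$, and symmetrically $b\in(0,c)$. Hence $a,b\in T$.

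The only delicate point is the bookkeeping in (2): we must make sure $m$ itself is excluded from $A$, and that $0$ is counted separately. With those two checks the count is immediate, and no step is a real obstacle.
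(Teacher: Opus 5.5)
Your proposal is correct and follows essentially the same argument as the paper for every item: the least representative in each residue class, the decomposition $p=m+(p-m)$ for a primitive $p\ne m$, strong induction for (3), and the contradictions $z-m=(a-m)+b\in S$ and $z-m=a+(b-m)\ge c$ for (4). The only difference is that you present the items in a different order, which is purely cosmetic.
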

\begin{proof}
Each residue class contains an element of $S$, since all sufficiently large
integers lie in $S$. Its least such element lies in $A$. Conversely, if
$a,b\in A$ are congruent modulo $m$ and $a<b$, then $b=a+km$ for some
$k\ge1$, so $b-m\in S$, a contradiction. If $a\ge c+m$, then
$a-m\ge c$ lies in $S$, again a contradiction. This proves (1).

The multiplicity $m$ is primitive, and $m\notin A$ because $0\in S$.
If a primitive $p\ne m$ were $y+m$ with $y\in S$, then $y>0$,
contradicting primitivity. Thus every other primitive lies in $A$.
Every positive element is either primitive or decomposable, and the two
possibilities are disjoint. This proves (2), including finiteness of $P$.
For (3), use strong induction on $x$: a positive nonprimitive element splits
as $u+v$ with $0<u,v<x$, and the induction hypothesis applies to both.

If $a=y+m$ for some $y\in S$, then $z=(y+b)+m$, contradicting $z\in A$.
Thus $a\in A$, and similarly $b\in A$. Since $b\ge m$ and $z<c+m$,
we obtain $a<c$; similarly $b<c$. This gives (4).
Repeated subtraction of $m$ while the result stays in $S$ gives the
expression in (5). Its uniqueness follows from the uniqueness of the
Ap\'ery representative in its residue class and cancellation of $m$.
\end{proof}

Put
\begin{equation}\label{eq:depth}
 q=\left\lceil\frac cm\right\rceil,\qquad
 \rho=qm-c,\qquad 0\le\rho<m.
\end{equation}
For $x\in\mathbb N$, define its level by
\[
 \lambda(x)=\left\lfloor\frac{x+\rho}{m}\right\rfloor,
 \qquad A_i=\{a\in A:\lambda(a)=i\},\qquad \alpha_i=|A_i|.
\]
Equivalently, $A_i=A\cap[im-\rho,(i+1)m-\rho)$.
We have $A_0=\{0\}$, all positive elements of $S$ have positive level,
and every element of $A$ has level at most $q$. Furthermore,
\begin{equation}\label{eq:level-left}
 x<c\quad\Longleftrightarrow\quad \lambda(x)<q,
 \qquad \lambda(a+km)=\lambda(a)+k.
\end{equation}
For sums one has
\begin{equation}\label{eq:level-add}
 \lambda(a+b)\ge\lambda(a)+\lambda(b)-1.
\end{equation}
Indeed, $a+b+\rho\ge(\lambda(a)+\lambda(b))m-\rho$
and $\rho<m$. The displayed statements also follow directly from the
half-open intervals defining the levels.

\begin{lemma}\label{lem:weights}
The depth and level cardinalities satisfy
\begin{align}
 1\le q&\le13,\label{eq:q-range}\\
 13&=q+\sum_{i=1}^{q-1}(q-i)\alpha_i,
       \label{eq:weighted-profile}\\
 \sum_{x\in T}\bigl(q-\lambda(x)\bigr)&=13-q.
       \label{eq:weight-sum}
\end{align}
Writing $t=|T|$, in particular $t\le13-q$.
If $d_T$ is the number of decomposable elements in $T$, then
\begin{equation}\label{eq:left-primitives}
                         e\ge1+t-d_T.
\end{equation}
\end{lemma}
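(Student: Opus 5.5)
The plan is to count the left elements $L=S\cap[0,c)$ level by level, using the Apéry decomposition of Lemma~\ref{lem:apery}(5). Every $x\in S$ is uniquely $a+km$ with $a\in A$ and $k\ge0$. By \eqref{eq:level-left}, $x<c$ exactly when $\lambda(a)+k<q$. So each $a\in A$ with $\lambda(a)=i<q$ contributes exactly $q-i$ left elements, namely $a,a+m,\dots,a+(q-1-i)m$, and Apéry elements of level $q$ contribute none. Summing gives
\[
13=n=\sum_{a\in A,\ \lambda(a)<q}(q-\lambda(a))=q+\sum_{i=1}^{q-1}(q-i)\alpha_i,
\]
where the term $q$ comes from $A_0=\{0\}$. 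This is \eqref{eq:weighted-profile}.

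Next, the positive Apéry elements of level below $q$ are exactly those in $(0,c)$, by \eqref{eq:level-left}, so they form $T$. Hence the sum over $i\ge1$ equals $\sum_{x\in T}(q-\lambda(x))$, and \eqref{eq:weight-sum} is just \eqref{eq:weighted-profile} rewritten. Every $x\in T$ has $\lambda(x)\le q-1$, so each summand is at least $1$, which gives $t\le 13-q$. Since $t\ge0$, this also gives $q\le13$. For the lower bound, $c>0$ (shown at the start of the section) gives $q=\lceil c/m\rceil\ge1$, completing \eqref{eq:q-range}.

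For \eqref{eq:left-primitives}, I would use Lemma~\ref{lem:apery}(2): every nonzero Apéry element other than the decomposable ones is primitive, and $m\notin A$. The elements of $T$ that are not decomposable are therefore primitive, and there are $t-d_T$ of them. The multiplicity $m$ is a further primitive outside $A$, hence distinct from these, so $e\ge1+t-d_T$.

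The only step requiring care is the endpoint bookkeeping in the level count: that $a+km<c$ iff $\lambda(a)+k<q$. This follows from $c=qm-\rho$ and the half-open level intervals. Everything else is direct substitution.
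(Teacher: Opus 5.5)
Your proof is correct and follows essentially the paper's argument: the same Ap\'ery-progression count of left elements gives both identities, and the same observation that $m\notin A$ is distinct from the $t-d_T$ primitive elements of $T$ gives \eqref{eq:left-primitives}. The only cosmetic difference is that you deduce $q\le13$ from $t\le13-q$, whereas the paper notes directly that $0,m,\dots,(q-1)m$ are $q$ distinct left elements.
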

\begin{proof}
The $q$ distinct multiples $0,m,\ldots,(q-1)m$ all lie below $c$,
so $q\le n=13$. The positivity of $c$ gives $q\ge1$.
By Lemma~\ref{lem:apery}, the elements of $S$ split into disjoint
progressions $a+\mathbb Nm$, $a\in A$. By \eqref{eq:level-left}, exactly
$q-\lambda(a)$ terms of the progression lie below $c$; this number is
nonnegative because $\lambda(a)\le q$. Summing gives
$n=\sum_{a\in A}(q-\lambda(a))$. The term $a=0$ is $q$, and those
with $a\ge c$ contribute zero. This proves the two equalities.
Each term indexed by $T$ is at least one, which proves $t\le13-q$.
Finally, the $t-d_T$ primitive elements in $T$, together with $m\notin A$,
are distinct members of the full primitive set $P$.
\end{proof}

Combining \eqref{eq:embedding} and \eqref{eq:depth} gives the useful
equivalence
\begin{equation}\label{eq:defect}
 c\le13e
 \quad\Longleftrightarrow\quad
 q\delta\le(13-q)e+\rho.
\end{equation}
Much of the argument bounds the number $\delta$ of decomposable Ap\'ery
elements in terms of the short weighted profile
\eqref{eq:weighted-profile}.

\section{Pair counts}\label{sec:pairs}

An unordered pair in this paper is a two-element \emph{multiset}; repeated
entries are permitted and a loop $\{x,x\}$ is counted once. Let
\[
 \mathcal E=\{\{x,y\}:x,y\in T,
                 \ \lambda(x)+\lambda(y)\le q+1\},\qquad M=|\mathcal E|.
\]
These are eligible pairs; their sums need not all belong to $A$.

\begin{lemma}\label{lem:pairs}
The eligible-pair count satisfies
\begin{equation}\label{eq:pair-bounds}
 \delta\le M\le\binom{t+1}{2},\qquad
 (q-1)M\le(t+1)(13-q)\le(14-q)(13-q).
\end{equation}
It is determined by the profile through
\begin{equation}\label{eq:pair-profile}
 M=\sum_{\substack{1\le i<j<q\\i+j\le q+1}}\alpha_i\alpha_j
   +\sum_{\substack{1\le i<q\\2i\le q+1}}
                  \binom{\alpha_i+1}{2}.
\end{equation}
\end{lemma}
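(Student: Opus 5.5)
The plan is to prove the three assertions in the order they are displayed. Each follows from Lemma~\ref{lem:apery}, the level inequalities \eqref{eq:level-left}--\eqref{eq:level-add}, and the weight identity \eqref{eq:weight-sum}.

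\emph{The bound $\delta\le M$.} For each $z\in D$, choose positive $a,b\in S$ with $z=a+b$. By Lemma~\ref{lem:apery}(4), both $a$ and $b$ lie in $T$. Since $z\in A$, its level is at most $q$, so \eqref{eq:level-add} gives
\[
 \lambda(a)+\lambda(b)\le\lambda(z)+1\le q+1 .
\]
Hence $\{a,b\}\in\mathcal E$. The map $z\mapsto\{a,b\}$ is injective, because the multiset determines its sum $z$. The bound $M\le\binom{t+1}{2}$ is simply the count of all two-element multisets from the $t$-element set $T$.

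\emph{The weighted bound.} Set $w(x)=q-\lambda(x)$ for $x\in T$. By \eqref{eq:level-left}, every $x\in T$ satisfies $1\le\lambda(x)\le q-1$, so $w(x)\ge1$. For an eligible pair $\{x,y\}$ we get
\[
 w(x)+w(y)=2q-\lambda(x)-\lambda(y)\ge q-1 .
\]
Summing over $\mathcal E$ gives $(q-1)M\le\sum_{\{x,y\}\in\mathcal E}\bigl(w(x)+w(y)\bigr)$. Because all weights are positive, this sum only increases if we extend it to all multisets from $T$, with a loop $\{x,x\}$ contributing $2w(x)$. In that full sum each fixed $x$ occurs in $t-1$ pairs with distinct partners and once, doubly, in its loop. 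So its total coefficient is $t+1$, and the full sum equals $(t+1)\sum_{x\in T}w(x)$. By \eqref{eq:weight-sum} this is $(t+1)(13-q)$. The last inequality then follows from $t\le13-q$ in Lemma~\ref{lem:weights}.

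\emph{The profile formula \eqref{eq:pair-profile}.} As above, elements of $T$ have levels $1,\dots,q-1$; the $\alpha_i$ elements of level $i$ for $i<q$ are exactly the level-$i$ elements of $T$, since level $\ge1$ and level $<q$ is equivalent to $0<x<c$. Partition the multisets in $\mathcal E$ by the unordered level pair $\{i,j\}$. For $i<j$, the pairs with one element from each level number $\alpha_i\alpha_j$. For $i=j$, they are the $\binom{\alpha_i+1}{2}$ multisets from level $i$. Eligibility translates to $i+j\le q+1$, respectively $2i\le q+1$.

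I expect no step to be a real obstacle. The only delicate points are the loop convention in the double count, which produces the factor $t+1$ rather than $t$, and checking that elements of $T$ have level at most $q-1$, which is needed so that every weight is at least one.
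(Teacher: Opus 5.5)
Your proposal is correct and follows the paper's proof essentially step for step. You cover $D$ by sums of eligible pairs from $T$; you phrase this as an injection $D\to\mathcal E$ where the paper uses surjectivity of the sum map, and you get eligibility from \eqref{eq:level-add} and $\lambda(z)\le q$ instead of the direct interval computation, but these are minor rephrasings. You then use the weighted double count with each loop contributing $2w(x)$, which gives the factor $t+1$, and sort pairs by their level pair for \eqref{eq:pair-profile}, exactly as the paper does; your explicit note that the weights are positive, which justifies extending the sum from $\mathcal E$ to all multisets from $T$, is a detail the paper leaves implicit.
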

\begin{proof}
Every $z\in D$ has a factorization $z=x+y$ with $x,y\in T$ by
Lemma~\ref{lem:apery}. Since $z<c+m=(q+1)m-\rho$,
\[
 (\lambda(x)+\lambda(y))m\le x+y+2\rho
             <(q+1)m+\rho<(q+2)m.
\]
Thus $\lambda(x)+\lambda(y)\le q+1$, so the sum map from eligible pairs
covers $D$. This gives $\delta\le M$, while the total number of unordered
pairs with repetition from $T$ is $\binom{t+1}{2}$.

Assign $w(x)=q-\lambda(x)$ to each $x\in T$. Each eligible pair has
$w(x)+w(y)\ge q-1$. Summing over all unordered pairs with repetition,
each $w(x)$ occurs $t-1$ times with another element and twice in its loop.
Consequently
\[
 (q-1)M\le\sum_{\{x,y\}\in\operatorname{Sym}^2(T)}(w(x)+w(y))
       =(t+1)\sum_{x\in T}w(x)=(t+1)(13-q).
\]
Now use $t\le13-q$. Finally, sorting pairs by their two levels gives
\eqref{eq:pair-profile}: distinct levels contribute products of their
cardinalities, and a single level contributes pairs with repetition.
\end{proof}

In particular, the profile and pair count take the following forms at
depths four, five and six:
\begin{align}
 q=4:\quad&3\alpha_1+2\alpha_2+\alpha_3=9,\notag\\
 &M=\binom{\alpha_1+\alpha_2+1}{2}
          +(\alpha_1+\alpha_2)\alpha_3;\label{eq:pairs-four}\\
 q=5:\quad&4\alpha_1+3\alpha_2+2\alpha_3+\alpha_4=8,\notag\\
 &M=\binom{\alpha_1+\alpha_2+\alpha_3+1}{2}
          +(\alpha_1+\alpha_2)\alpha_4;\label{eq:pairs-five}\\
 q=6:\quad&5\alpha_1+4\alpha_2+3\alpha_3+2\alpha_4+\alpha_5=7,\notag\\
 &M=\binom{\alpha_1+\alpha_2+\alpha_3+1}{2}
          +(\alpha_1+\alpha_2+\alpha_3)\alpha_4
          +(\alpha_1+\alpha_2)\alpha_5.\label{eq:pairs-six}
\end{align}
The explicit bounds needed for the subsequent cases are obtained from these
small nonnegative integer profiles.

\clearpage
\section{Depth four}\label{sec:depth-four}

Throughout this section, except in Lemma~\ref{lem:shadow}, assume
$q=4$, $n=13$, and $e\ge4$. Put
\[
 a=\alpha_1,\qquad b=\alpha_2,\qquad d=\alpha_3.
\]
The weighted profile and the desired inequality become
\begin{equation}\label{qf:profile-target}
 3a+2b+d=9,\qquad 4\delta\le9e+\rho.
\end{equation}
We first treat $a>0$ using the graph of actual Ap\'ery sums.
For $a=0$, a partition of $D$ and a restriction on residues give stronger
bounds than the eligible-pair count alone.

\begin{center}
\begin{tikzpicture}[>=Stealth,
 box/.style={draw,rounded corners,align=center,text width=55mm,
 inner sep=5pt,font=\small}]
\node[box,text width=63mm] (root) at (6.5,2.2)
 {$q=4$, $e\ge4$\\profile: $3a+2b+d=9$; target: $4\delta\le9e+\rho$};
\node[box] (left) at (3,0)
 {$a>0$: seven profiles\\actual Ap\'ery sum graph; missing edges and collisions};
\node[box] (right) at (10,0)
 {$a=0$: $D=K\mathbin{\dot\cup}J\mathbin{\dot\cup}H$\\residue bound $k+h\le\rho$; cubic shadow; finite table};
\draw[->] (root) -- (left);
\draw[->] (root) -- (right);
\end{tikzpicture}
\end{center}
The two branches of this proof use different information about
actual sums; the case tables are the final arithmetic checks after
those structural bounds. In the right branch, $K,J,H$ partition
the decomposable Ap\'ery elements as defined in the empty-first-level
subsection; $k=|K|$ and $h=|H|$.

\subsection{Degrees and collisions of Ap\'ery sums}

Consider the graph with vertex set $T$ whose edges are the unordered pairs
$\{u,v\}$ such that $u+v\in A$. Loops are allowed. Write $E$ for its
edge set and
\[
 N(u)=\{v\in T:u+v\in A\},\qquad
 \deg(u)=|N(u)|,\qquad R=T\cap D,\qquad \kappa=|R|.
\]
A loop contributes one to this degree. Lemma~\ref{lem:apery} shows that
the sum map $E\longrightarrow D$ is surjective. Every actual edge is
eligible, so
\begin{equation}\label{qf:edge-bound}
 \delta\le |E|\le M,
 \qquad e\ge1+|T|-\kappa.
\end{equation}

\begin{lemma}\label{qf:graph}
Every decomposable vertex $u\in R$ satisfies $\deg(u)\le\kappa$.
If some decomposable vertex has degree at least two, then
$\delta\le |E|-1$.
\end{lemma}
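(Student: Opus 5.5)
For the degree bound, I would fix a decomposable vertex $u\in R$ and map each neighbor $v\in N(u)$ to $v$ itself. The goal is to show $N(u)\subseteq R$, which gives $\deg(u)\le|R|=\kappa$.

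Take $v\in N(u)$, so $u+v\in A$. Since $u$ is decomposable, write $u=x+y$ with $x,y\in S$ positive. Then $u+v=x+(y+v)$ is a sum of two positive elements of $S$ lying in $A$. By Lemma~\ref{lem:apery}(4), $y+v\in T$, and $y+v$ is itself a decomposable element of $S$ because both $y$ and $v$ are positive. The question is whether $y+v$ lies in $A$: an element of $T$ is by definition in $A$, so Lemma~\ref{lem:apery}(4) already gives $y+v\in A\cap(0,c)=T$, and hence $y+v\in R$. This produces a map $v\mapsto y+v$ from $N(u)$ into $R$. For a fixed decomposition $(x,y)$ the map is injective, since $v$ is recovered by subtracting $y$. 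Therefore $\deg(u)\le\kappa$. This route avoids proving $v\in R$ directly, which would be false in general. I will also check that a loop $v=u$ is harmless here, since it maps to $y+u$ and needs no separate treatment.

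For the second assertion, suppose $u\in R$ has two distinct neighbors $v_1\ne v_2$. Choose $u=x+y$ as above and set $z_i=u+v_i\in A$. I want two distinct edges of $E$ with the same sum, so that the surjection $E\to D$ of \eqref{qf:edge-bound} has a collision and therefore $\delta\le|E|-1$. The edge $\{u,v_1\}$ has sum $z_1=x+(y+v_1)$, and $\{x,y+v_1\}$ is another pair with this sum. By Lemma~\ref{lem:apery}(4) both $x$ and $y+v_1$ lie in $T$, so $\{x,y+v_1\}\in E$. This pair equals $\{u,v_1\}$ only if $x\in\{u,v_1\}$. The case $x=u$ is impossible because $y>0$. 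If $x=v_1$, then $y=u-v_1$, and I pass to $v_2$ instead: $x=v_2$ would give $v_1=v_2$, a contradiction. So for at least one $i$ the pair $\{x,y+v_i\}$ is a second edge with sum $z_i$, distinct from $\{u,v_i\}$.

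Two distinct edges then map to the same element of $D$. Since every element of $D$ has at least one preimage, this gives $|D|\le|E|-1$. The main obstacle is the bookkeeping of degenerate coincidences in that final step. Here the hypothesis $\deg(u)\ge2$ is exactly what lets one neighbor absorb the bad case $x=v_i$. The fact that $x+y=u$ with both summands positive should exclude any remaining coincidences.
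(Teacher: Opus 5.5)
Your proof is correct and follows the paper's route: the injection $v\mapsto y+v$ of $N(u)$ into $R$ is exactly the paper's degree bound, and your collision between $\{u,v\}$ and $\{x,y+v\}$ is the paper's final comparison. The paper writes that comparison as $\{v,u\}$ versus $\{s,v+s\}$, after first showing $s=t$; you use it directly, so that preliminary step is not needed. Do delete your opening sentence proposing the map $v\mapsto v$ and the goal $N(u)\subseteq R$, which, as you note yourself later, is false in general.
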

\begin{proof}
Write $u=s+t$ with $s,t\in S$ positive. For $v\in N(u)$ we have
$(s+v)+t=u+v\in A$. By Lemma~\ref{lem:apery}, $s+v\in T$;
it is decomposable, so $s+v\in R$. The map $v\mapsto s+v$ is injective,
proving the degree bound.

For the second assertion, suppose the sum map on $E$ is injective.
For any $v\in N(u)$, the pairs
\[
 \{v+s,t\},\qquad \{v+t,s\}
\]
are edges with equal sum. Their endpoints belong to $T$ by
Lemma~\ref{lem:apery}. Equality of the unordered pairs forces $s=t$:
the other matching would give $v=0$. Now the edges
\[
 \{v,u\},\qquad \{s,v+s\}
\]
also have equal sum. Their equality forces $v=s$, since the other
matching would give $s=0$. Thus $N(u)\subseteq\{s\}$.
Consequently a decomposable vertex of degree at least two makes the
surjective sum map noninjective, and its image has at most $|E|-1$
elements.
\end{proof}

Put $U=A_1\cup A_2$ and $V=A_3$. At depth four, the eligible pairs are
exactly the pairs within $U$ and those joining $U$ to $V$. Thus, with
$r=a+b$,
\begin{equation}\label{qf:eligible}
 |U|=r,\quad |V|=d,\quad |T|=r+d,\quad
 M=\binom{r+1}{2}+rd.
\end{equation}
For later use, if all pairs $\{u,v\}$ with $v\in W$ are eligible, then
at least $|W|-\deg(u)$ of these distinct pairs are missing from $E$.
Hence
\begin{equation}\label{qf:missing-star}
 |E|\le M-|W|+\deg(u).
\end{equation}
We may take $W=T$ when $u\in U$, and $W=U$ for any vertex $u$.
For distinct $u,v\in V$, their eligible pairs with $U$ are disjoint,
and therefore
\begin{equation}\label{qf:two-stars}
 |E|\le M-2|U|+\deg(u)+\deg(v).
\end{equation}
These counts remain valid for loops because a fixed vertex determines
one unordered pair for each choice of its other endpoint.

We also use a simple consequence of the multiplicity. If $v\in S$ is
positive, then $v\ge m$, so
\begin{equation}\label{qf:strict-level}
 \lambda(u+v)\ge\lambda(u)+1.
\end{equation}
Thus every factor in a decomposition has smaller level than the sum.
In particular, all elements of $A_1$ are primitive. If $A_1=\{x\}$,
then every decomposable element of $A_2$ is $2x$. Consequently
\begin{equation}\label{qf:singleton-first}
 a=1\quad\Longrightarrow\quad
 R\subseteq\{2x\}\cup A_3,
 \qquad \kappa\le1+d.
\end{equation}

\subsection{The seven profiles with a nonempty first level}

\begin{lemma}\label{qf:nonempty}
If $A_1\ne\varnothing$, then $4\delta\le9e$.
\end{lemma}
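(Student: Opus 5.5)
The plan is to go through the seven nonnegative solutions of $3a+2b+d=9$ with $a\ge1$, namely
\[
(a,b,d)\in\{(1,0,6),(1,1,4),(1,2,2),(1,3,0),(2,0,3),(2,1,1),(3,0,0)\}.
\]
For each profile I combine $\delta\le|E|\le M$ from \eqref{qf:edge-bound} with $M=\binom{r+1}{2}+rd$ from \eqref{qf:eligible}, and with the standing hypothesis $e\ge4$. This gives $9e\ge36$. Four profiles then need nothing further. For $(1,0,6)$ we get $M=7$; for $(2,0,3)$ and $(2,1,1)$ we get $M=9$; for $(3,0,0)$ we get $M=6$. In each case $4\delta\le4M\le36\le9e$.

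The three remaining profiles need more. For $(1,1,4)$ we have $M=11$, so we need $e\ge5$ or $\delta\le9$. For $(1,2,2)$ we have $M=12$, so we need $e\ge6$, or $\delta\le10$ when $e=5$, or $\delta\le9$ when $e=4$. For $(1,3,0)$ we have $M=10$, so we need $e\ge5$ or $\delta\le9$. In all three, $a=1$, so \eqref{qf:singleton-first} applies with $A_1=\{x\}$. It gives $R\subseteq\{2x\}\cup A_3$ and $\kappa\le1+d$, and \eqref{qf:edge-bound} gives $e\ge1+|T|-\kappa$.

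I will split each hard profile according to $\kappa$. When $\kappa$ is small, $e\ge1+|T|-\kappa$ is large enough by itself. When $\kappa$ is large, there are decomposable vertices, and I bound $|E|$ through their degrees. I use Lemma~\ref{qf:graph} ($\deg(u)\le\kappa$), the star bounds \eqref{qf:missing-star} and \eqref{qf:two-stars}, and the collision clause, which gives $\delta\le|E|-1$ as soon as some decomposable vertex has degree at least two.

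For $(1,3,0)$ we have $T=\{x\}\cup A_2$ and $\kappa\le1$. If $\kappa=0$, then $e\ge5$. If $\kappa=1$, the decomposable vertex is $u=2x\in U$ and $\deg(u)\le1$. Then \eqref{qf:missing-star} with $W=T$ gives $|E|\le10-4+1=7$, so $4\delta\le28$.

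For $(1,1,4)$ we have $|T|=6$ and $e\ge7-\kappa$. So only $\kappa\ge3$ is at issue. That forces at least two decomposable vertices in $V=A_3$, and \eqref{qf:two-stars} gives $|E|\le11-4+\deg(u)+\deg(v)$. Here I need $\deg(u)+\deg(v)\le2$ to get $|E|\le9$. Alternatively, one of these degrees is at least two, the collision clause subtracts one, and a finer count closes the case. For this I would try to exploit \eqref{qf:strict-level}. A vertex $u\in A_3$ can only pair with $U$, and the translation $v\mapsto s+v$ from Lemma~\ref{qf:graph} sends $N(u)$ injectively into $R$ while raising levels. Consequently its image lies in $R\cap(A_2\cup A_3)$ at a strictly higher level than $v$, which should leave very few targets.

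The profile $(1,2,2)$ is handled the same way. We have $|T|=5$ and $e\ge6-\kappa$, and $\kappa\le3$.

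I expect these last two profiles to be the main obstacle. The required deficit is only one or two edges, but it must be certified carefully, using level monotonicity together with both clauses of Lemma~\ref{qf:graph}. This may need a short subcase analysis on which of $2x$ and the $A_3$ elements are decomposable.
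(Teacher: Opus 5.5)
Your handling of the four easy rows and of $(1,3,0)$ is correct and agrees with the paper. You leave the profile $(1,2,2)$ as ``handled the same way''. It can in fact be closed with the tools you name, by splitting on whether $R$ meets $U$:
\begin{itemize}
\item if $u\in R\cap U$, use \eqref{qf:missing-star} with $W=T$ and then the collision clause;
\item if $R\cap U=\varnothing$, then $R=V$, and \eqref{qf:two-stars} with the collision clause gives $\delta\le9$.
\end{itemize}
For $e=5$ you only need $\delta\le11$, not $\delta\le10$, since $4\cdot11=44\le45$.

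The genuine gap is $(1,1,4)$ with $e=4$, where your method cannot reach $\delta\le9$. Each decomposable vertex of $A_3$ has eligible neighbours only in $U=\{x,y\}$, and nothing in Lemma~\ref{qf:graph} prevents it from having degree $2$, since there $\deg(u)\le\kappa$ with $\kappa\ge3$. So \eqref{qf:two-stars} yields only $|E|\le7+2+2=11$. The collision clause removes a single value, leaving $\delta\le10$ and $4\delta\le40>36$. Your proposed refinement via $v\mapsto s+v$ does not help either: it sends $N(u)\subseteq\{x,y\}$ injectively into $R$, which here has at least three elements, so it imposes no restriction on $\deg(u)$.

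What is missing is to apply \eqref{qf:strict-level} to the factors of the decomposable elements themselves, rather than to neighbours.
\begin{itemize}
\item Every decomposition of an element of $R$ has both factors of smaller level, hence in $U$. So $R\subseteq Q=\{2x,x+y,2y\}$.
\item Then $\kappa\ge3$ forces $R=Q$ with three distinct values.
\item The element $y$ is primitive: if $y=2x$, then $2y\in R$ would give $4x<c\le4m$, contradicting $x\ge m$.
\item Hence $Q\subseteq A_3$ and $A_3=Q\sqcup\{z\}$.
\end{itemize}
Every element of $D$ is the sum of an eligible pair, so
\[
 D\subseteq(U+U)\cup(U+A_3)\subseteq Q\cup\{3x,2x+y,x+2y,3y\}\cup\{x+z,y+z\},
\]
which has at most $9$ elements. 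The saving comes from two numerical coincidences among the eight $U$--$A_3$ pairs: $x+(x+y)=y+2x$ and $y+(x+y)=x+2y$. An edge count combined with a single collision cannot detect both. That is why this profile needs the explicit description of $R$ and a count of distinct sum values, and why your proposal leaves it unproved.
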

\begin{proof}
The nonnegative solutions of $3a+2b+d=9$ with $a\ge1$ give precisely
the following table. For each $a\in\{1,2,3\}$, choose
$0\le b\le\lfloor(9-3a)/2\rfloor$ and put $d=9-3a-2b$;
this describes the complete enumeration.
\[
\begin{array}{c|c|c|c}
 a&b&d&M\\\hline
 3&0&0&6\\
 2&0&3&9\\
 2&1&1&9\\
 1&0&6&7\\
 1&1&4&11\\
 1&2&2&12\\
 1&3&0&10
\end{array}
\]
The first four rows have $\delta\le9$, which suffices because $e\ge4$.
Every row has $M\le12$, so $e\ge6$ also suffices. When $e=5$, only
the row $(1,2,2)$ needs an improvement on $\delta\le M$, since
$4\cdot11\le9\cdot5$. It remains to establish the following four
bounds.

\medskip\noindent
\textit{Profile $(1,3,0)$, with $e=4$: $\delta\le7$.}
Here $|T|=|U|=4$ and $M=10$. By
\eqref{qf:edge-bound} and \eqref{qf:singleton-first},
$1\le\kappa\le1$. Choose $u\in R$. Its degree is at most one, and
\eqref{qf:missing-star}, with $W=T$, gives
$|E|\le10-4+1=7$.

\medskip\noindent
\textit{Profile $(1,2,2)$, with $e=5$: $\delta\le11$.}
Here $|U|=3$, $|V|=2$, $|T|=5$, $M=12$, and
$1\le\kappa\le3$. If $\kappa\le2$, choose any $u\in R$ and apply
\eqref{qf:missing-star} with $W=U$ to obtain
$|E|\le12-3+2=11$.
If $\kappa=3$, some $u\in R$ belongs to $U$, because $|V|=2$.
Its full eligible set of neighbors is $T$, giving
$|E|\le12-5+3=10$.

\medskip\noindent
\textit{Profile $(1,2,2)$, with $e=4$: $\delta\le9$.}
Now $2\le\kappa\le3$.
If $u\in R\cap U$, then
$|E|\le12-5+\deg(u)=7+\deg(u)\le10$.
If $\deg(u)\ge2$, Lemma~\ref{qf:graph} improves this to
$\delta\le9$; if $\deg(u)\le1$, already $\delta\le8$.
If $R\cap U=\varnothing$, then $R=V=\{u,v\}$ and $\kappa=2$.
Both degrees are at most two and \eqref{qf:two-stars} gives
$|E|\le6+\deg(u)+\deg(v)\le10$.
If $\deg(u)=2$, use the collision bound. Otherwise
$|E|\le6+1+2=9$.

\medskip\noindent
\textit{Profile $(1,1,4)$, with $e=4$: $\delta\le9$.}
Write $A_1=\{x\}$ and $A_2=\{y\}$, so $U=\{x,y\}$ and
$|T|=6$. The primitive count implies $\kappa\ge3$.
Every decomposition of an element of $R$ uses two elements of lower
level by \eqref{qf:strict-level}; thus both factors belong to $U$.
It follows that
\[
 R\subseteq Q:=\{2x,x+y,2y\}.
\]
Since $|R|\ge3$ and $|Q|\le3$, equality holds and all three elements
of $Q$ are distinct. The element $x$ is primitive. The element $y$ is
also primitive: otherwise $y=2x$, while $2y\in R$ would imply
$4x<c\le4m$, contradicting $x\ge m$.
Consequently $Q=R\subseteq A_3$. There is a single element $z$ such
that $A_3=Q\mathbin{\dot\cup}\{z\}$.

Every decomposable Ap\'ery element is a sum associated to an eligible
pair, hence lies in $(U+U)\cup(U+A_3)$. Substituting the displayed
description of $A_3$ bounds this set by
\[
 Q\ \cup\ \{3x,2x+y,x+2y,3y\}\ \cup\ \{x+z,y+z\}.
\]
It contains at most $3+4+2=9$ values.

\medskip
The two bounds for $e=4$ that equal nine give $4\delta\le36=9e$;
the bound seven is stronger. The exceptional $e=5$ bound gives
$4\delta\le44<45=9e$. This completes every row and every $e\ge4$.
\end{proof}

\subsection{A numerical cubic shadow bound}

For finite sets of integers, write $B+C=\{b+c:b\in B,c\in C\}$.
The next lemma bounds distinct numerical sums, even when different
formal products represent the same number.

\begin{lemma}\label{lem:shadow}
Let $B\subseteq\mathbb N$ be finite with $|B|\le4$, and let
\[
 K\subseteq B+B,\qquad H\subseteq B+B+B.
\]
Suppose that for every $x,y,z\in B$ such that $x+y+z\in H$,
all three sums $x+y$, $x+z$, and $y+z$ belong to $K$.
If $k=|K|\le3$, then
\[
 |H|\le f(k),\qquad
 f(0)=0,\quad f(1)=1,\quad f(2)=2,\quad f(3)=4.
\]
\end{lemma}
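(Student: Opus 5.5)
The plan is to reduce everything to statements about \emph{admissible triples}: multisets $\{x,y,z\}\subseteq B$ whose three pair sums all lie in $K$. The hypothesis says more than that each element of $H$ has one admissible representation: \emph{every} representation of $h\in H$ as $x+y+z$ with $x,y,z\in B$ is admissible. This universal form is essential. With only one admissible representation per element, the bound already fails: for $B=\{0,1,2,3\}$ and $K=\{2,4\}$ the admissible triples $\{1,1,1\},\{2,2,2\},\{1,1,3\},\{0,2,2\}$ give four distinct sums. So the proof must repeatedly find a second, non-admissible representation of a candidate sum.

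\emph{Step 1 (invariances and the triple--pair dictionary).} Translating $B$ by an integer $c$ shifts pair sums by $2c$ and triple sums by $3c$, so the hypothesis and the cardinalities are unchanged. Hence we may assume $\min B=0$ whenever convenient. An admissible triple is recovered from its ordered pair sums $\sigma_1\le\sigma_2\le\sigma_3$ by $x=(\sigma_1+\sigma_2-\sigma_3)/2$, and so on, and its total is $(\sigma_1+\sigma_2+\sigma_3)/2$. So $|H|$ is at most the number of realizable multisets $\{\sigma_1,\sigma_2,\sigma_3\}\subseteq K$. If two pair sums agree, the triple has a repeated element. If all three agree, the triple is constant, $\{x,x,x\}$ with $2x\in K$.

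\emph{Step 2 (small $k$).} For $k=0$, $H$ is empty. For $k=1$ only constant triples occur, and $K=\{2x\}$ forces $H\subseteq\{3x\}$. For $k=2$, with $K=\{s<t\}$, only four patterns are possible: $(s,s,s)$, $(t,t,t)$, $(s,s,t)$ and $(s,t,t)$. To show at most two of their sums survive, I would pair the patterns and, for each pair, exhibit an alternative representation of one of the two sums, built from the same elements of $B$ (shifting one element up and another down), that has a pair sum outside $\{s,t\}$. For example, if $\{x,x,x\}$ and $\{x,x,z\}$ both occur, so that $2x=s$ and $x+z=t$, I would rewrite one sum using $z$ twice or $x$ against $z$ and check the resulting pair sums. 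The argument should show that at most one constant pattern and at most one mixed pattern are compatible, or that two constant patterns exclude both mixed ones. Then $|H|\le2$.

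\emph{Step 3 ($k=3$, the main obstacle).} Here $K=\{s_1<s_2<s_3\}$ and there are ten multisets of pair sums. I would use $|B|\le4$ in an essential way: every admissible triple uses at most three distinct elements, each nonconstant pattern fixes linear relations among elements of $B$, and with at most four elements these relations can coexist only in a bounded number of order types. My first attempt would be to show that the set of admissible triples is contained in a ``cubic shadow'' of a configuration $\{b_1,b_2,b_3\}$ with $K=\{b_i+b_j\}$ in the worst case, and then count directly. I would handle separately the case in which $K$ contains a doubled element $2b$ and the case in which it does not. In each case I would list the admissible triples and remove those sums admitting a non-admissible alternative representation. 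The target is that at most four sums survive, for instance $3b_1,\,2b_1+b_2,\,b_1+2b_2,\,3b_2$ in a two-point configuration with $K=\{2b_1,b_1+b_2,2b_2\}$.

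If a clean hand argument for $k=3$ resists, the fallback is a finite check. The truth of the hypothesis and the value of $|H|$ depend only on the pattern of equalities among the at most $\binom{6}{3}$ triple sums and $\binom{5}{2}$ pair sums of $B$. After translation and scaling, these equality patterns are finitely many realizable additive types of a four-element set, which can be enumerated exhaustively in the same spirit as the bounded evaluation of Lemma~\ref{se:finite-check}.
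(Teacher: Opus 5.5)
Your Steps 1 and 2 are sound in outline, and Step 2 closes quickly. Put $x=s/2$ and $y=t/2$. The patterns $(s,s,s)$ and $(s,t,t)$ require $x\in B$, and the patterns $(t,t,t)$ and $(s,s,t)$ require $y\in B$. If $x\notin B$ or $y\notin B$, only two patterns are available at all. Otherwise the representations $\{x,x,y\}$ and $\{x,y,y\}$, whose pair sum $x+y$ lies strictly between $s$ and $t$, exclude the sums $2x+y$ and $x+2y$ of the two mixed patterns. (Your illustrative pair $\{x,x,x\},\{x,x,z\}$ is in fact compatible: $B=\{x,z\}$, $K=\{2x,x+z\}$, $H=\{3x,2x+z\}$ satisfies the hypothesis, so no rewriting can exclude it.)

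The genuine gap is Step 3, the substantial case, where you give no argument. The dictionary of Step 1 only bounds $|H|$ by the number of multisets of pair \emph{values}, which is $\binom{5}{3}=10$ when $k=3$. The instruction ``list the admissible triples and remove those sums admitting a non-admissible alternative representation'' names a case analysis without carrying it out. You do not say which alternative representation eliminates which combination of the ten patterns, nor why the elimination always leaves at most four. The fallback is correct in principle, since the hypothesis and $|H|$ depend only on finitely many equality types of sums. But you neither specify the enumeration of realizable types nor report its result, so the bound $f(3)=4$ is not established.

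The missing idea, which is the paper's, is to make the pairs canonical rather than count their values. Introduce a formal variable $X_i$ for each element $b_i$ of $B$, and order monomials by a rank compatible with multiplication, for instance $\sum_i u_i5^i$ on exponent vectors. Let $\mathcal Q$ consist of the least-rank quadratic monomial of each value in $K$, and for $h\in H$ take the least-rank cubic monomial of value $h$. By the universal hypothesis, its three quadratic divisors have values in $K$. Each must be the chosen representative of its value: otherwise, replacing it while keeping the remaining variable yields a cubic of value $h$ and smaller rank.

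Hence distinct elements of $H$ are values of distinct cubics whose quadratic divisors all lie in one fixed set $\mathcal Q$ of $k$ loops $X_i^2$ and edges $X_iX_j$. So $|H|$ is at most the number of loops, loop--edge incidences $X_i^2X_j$, and triangles $X_iX_jX_t$. For $k=0,1,2,3$ this is at most $0,1,2,4$, and two loops joined by an edge is extremal, exactly your example. This argument uses the universal hypothesis once and needs no search for non-admissible representations. It also does not use $|B|\le4$ at all, so the ``essential'' role you planned for $|B|\le4$ is not where the bound comes from.
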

The proof uses the hypothesis for \emph{every} representation of a
numerical cubic sum. Choosing a least-rank monomial for each value
makes quadratic divisors canonical by replacement. Here $\mathcal Q$
denotes the set of least-rank quadratic representatives of $K$:
\[
\begin{aligned}
h\in H&\longmapsto\text{least-rank cubic }M_h\\
&\longmapsto\text{canonical quadratic divisors in }\mathcal Q\\
&\longmapsto\text{loop, edge, and triangle count}.
\end{aligned}
\]
This is the step that controls collisions among distinct formal sums.
\begin{proof}
If $B$ is empty, then $H$ is empty. Otherwise enumerate its distinct
elements as $b_0,\ldots,b_{r-1}$, where $1\le r\le4$, and introduce
formal variables $X_0,\ldots,X_{r-1}$. For a monomial $X^u$ of total
degree two or three define its value and its rank by
\[
 \nu(X^u)=\sum_{i=0}^{r-1}u_i b_i,\qquad
 \operatorname{rank}(X^u)=\sum_{i=0}^{r-1}u_i5^i.
\]
All exponents are at most three, so uniqueness of base-five expansions
shows that distinct monomials have distinct ranks.
For each $v\in K$ choose the quadratic monomial of value $v$ having
least rank. Let $\mathcal Q$ be this set of $k$ monomials.
For each $h\in H$ choose the cubic monomial of value $h$ having least
rank.

Every quadratic divisor of such a chosen cubic has value in $K$ by
the hypothesis. Moreover it is the chosen representative of its value.
Indeed, replacing that divisor by a quadratic monomial of the same
value and smaller rank, and retaining the remaining variable, would
produce a cubic of value $h$ and smaller rank. This contradicts the
choice of the cubic. Hence all quadratic divisors of every chosen
cubic belong to $\mathcal Q$. Cubics chosen for different values of
$h$ are distinct.

It remains to count cubic monomials all of whose quadratic divisors
belong to a fixed set $\mathcal Q$ of at most three quadratics.
Regard $X_i^2$ as a loop at $i$ and $X_iX_j$, $i\ne j$, as an edge.
Let $\ell$ be the number of loops and $s=k-\ell$ the number of ordinary
edges. There are three types of allowable cubic:
\begin{itemize}
 \item $X_i^3$, one for each loop;
 \item $X_i^2X_j$, one for each incidence of an ordinary edge with a
       vertex carrying a loop;
 \item $X_iX_jX_t$ with three distinct indices, one for each triangle
       of ordinary edges.
\end{itemize}
Thus their total number is
\[
 \ell+\sum_{\{i,j\}\text{ an ordinary edge}}
       \bigl(\mathbf1_{X_i^2\in\mathcal Q}
             +\mathbf1_{X_j^2\in\mathcal Q}\bigr)
       +\#\{\text{triangles of ordinary edges}\}.
\]
For $k\le3$, the complete bounds for this expression are as follows;
a dash denotes an impossible value of $\ell$.
\[
\begin{array}{c|rrrr|c}
 &\ell=0&\ell=1&\ell=2&\ell=3&\text{maximum}\\\hline
 k=0&0&-&-&-&0\\
 k=1&0&1&-&-&1\\
 k=2&0&2&2&-&2\\
 k=3&1&3&4&3&4
\end{array}
\]
To verify the table, a triangle requires all three available edges and
therefore occurs only in the entry $(k,\ell)=(3,0)$, where there is at
most one. With exactly one loop, each ordinary edge contributes at
most one incidence; with two loops, the only possible remaining edge
contributes at most two; with three loops there are no ordinary edges.
The rows $k\le2$ follow from the same count. This exhausts all
possibilities and proves the stated bound on $|H|$.
\end{proof}

\subsection{An empty first level: partition and residues}

Assume now that $A_1=\varnothing$, and set $B=A_2$, $C=A_3$.
Then
\begin{equation}\label{qf:empty-profile}
 |B|=b,\quad |C|=d,\quad 2b+d=9,
 \quad 0\le b\le4.
\end{equation}
Define three disjoint sets of decomposable Ap\'ery elements:
\begin{equation}\label{qf:partition-def}
 \begin{gathered}
 K=C\cap D,\qquad J=A_4\cap(B+B),\qquad
 H=(A_4\cap D)\setminus(B+B),\\
 k=|K|,\qquad \ell=|J|,\qquad h=|H|.
 \end{gathered}
\end{equation}

\begin{lemma}\label{qf:partition}
With this notation,
\begin{align}
 D&=K\mathbin{\dot\cup}J\mathbin{\dot\cup}H,
       &\delta&=k+\ell+h,\label{qf:partition-card}\\
 K&\subseteq B+B,&H&\subseteq B+C,\label{qf:sum-inclusions}\\
 k+\ell&\le\binom{b+1}{2},&h&\le bd,\label{qf:basic-counts}\\
 e&\ge1+b+d-k,&0\le k&\le3.\label{qf:primitive-empty}
\end{align}
\end{lemma}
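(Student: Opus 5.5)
The plan is to locate every decomposable Apéry element by its level, then bound each piece by a count of pairs.

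\emph{Setup.} We are at depth $q=4$ with $A_1=\varnothing$. By Lemma~\ref{lem:apery}(1) and the level bound, $A=\{0\}\cup B\cup C\cup A_4$. The elements of $B=A_2$ are primitive: a factor of a sum has strictly smaller level by \eqref{qf:strict-level}, and only levels $0$ and $1$ lie below level $2$. Hence $D\subseteq C\cup A_4$, so
\[
 D=(C\cap D)\cup(A_4\cap D)=K\cup J\cup H .
\]
Here $J\subseteq A_4\cap D$ because elements of $B+B$ are decomposable. The sets $K$ and $J\cup H$ lie in different levels, and $J,H$ are disjoint by definition. This gives \eqref{qf:partition-card}.

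\emph{Sum inclusions.} For $z\in K$, write $z=x+y$ with $x,y\in T$ (Lemma~\ref{apery}(4)). Both factors have level below $3$ by \eqref{qf:strict-level}, so they lie in $B$, and $K\subseteq B+B$. For $z\in H$, a factorization $z=x+y$ with $x,y\in T$ has both factors of level at most $3$. By the eligibility computation in Lemma~\ref{lem:pairs}, $\lambda(x)+\lambda(y)\le5$. Since $z\notin B+B$, not both factors are in $B$. Two factors in $C$ would give level sum $6$, which is excluded. Hence one factor is in $B$ and the other in $C$, so $z\in B+C$.

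\emph{Counts.} Both $K$ and $J$ are contained in $B+B$, and they are disjoint. So $k+\ell\le|B+B|$, and $|B+B|\le\binom{b+1}{2}$ counts multisets. Likewise $h\le|B+C|\le bd$.

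\emph{Primitive count.} Here $T=B\cup C$ and $t=b+d$. The decomposable elements of $T$ are exactly $K$, since $B$ is primitive, so $d_T=k$. Then \eqref{eq:left-primitives} gives $e\ge1+b+d-k$.

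\emph{The bound $k\le3$.} This step needs the most care. From \eqref{qf:empty-profile}, $b\le4$.
\begin{itemize}
\item If $b\le2$, then $k\le\binom{b+1}{2}\le3$.
\item If $b=3$, then $d=3$ and $\binom{4}{2}=6$, which is not enough on its own.
\item If $b=4$, then $d=1$.
\end{itemize}
The plan is to use the level of the sums. Every $x\in B$ satisfies $x\ge2m-\rho$, so $x+y\ge4m-2\rho$. Elements of $C$ are $<4m-\rho$, hence $\lambda(x+y)\ge3$ always. Each element of $K$ is therefore a sum landing exactly at level $3$. One cannot have $k\ge4$ with all of $K\subseteq C$, because $C$ is too small relative to the pair sums in the cases where $d$ is small. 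The case $b=4$, $d=1$ gives $k\le d=1$ directly, since $K\subseteq C$. For $b=3$, $d=3$ we get $k\le d=3$ trivially.

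So in every case $k\le\min\{\binom{b+1}{2},d\}$. Checking the possible $b$ against \eqref{qf:empty-profile}:
\begin{itemize}
\item $b=0$: $k=0$.
\item $b=1$: $k\le1$.
\item $b=2$: $k\le3$.
\item $b=3$: $k\le3$.
\item $b=4$: $k\le1$.
\end{itemize}
Thus $k\le3$ holds in all cases, and this completes the proof.
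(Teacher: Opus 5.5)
Your proof is correct and follows the paper's argument essentially step for step, including the final bound via $k\le\min\{d,\binom{b+1}{2}\}$; the paper phrases that last step as a contradiction from $k\ge4$, while you check the five values of $b$. Two small points: when you show $B$ is primitive, say explicitly that the factors lie in $T$ by Lemma~\ref{lem:apery}(4), since $m$ itself is an element of $S$ at level $1$ and is not excluded by $A_1=\varnothing$; and the paragraph about the levels of sums in $B+B$ is unnecessary, because $K\subseteq C$ and $K\subseteq B+B$ already give the bound.
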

\begin{proof}
Every positive Ap\'ery element has level at least two. If an element of
$A_2$ were decomposable, its factors would belong to $T$ and have
levels at least two, whereas \eqref{eq:level-add} would force the sum
to have level at least three. Thus $B$ consists of primitive elements.
For a decomposable element of $A_3$, the factor levels are at least
two and their sum is at most four; both factors therefore belong to
$B$. For a decomposable element of $A_4$, their level sum is at most
five, so its factor levels are $(2,2)$, $(2,3)$, or $(3,2)$.
This proves the partition and the two sumset inclusions.

The sets $K$ and $J$ are disjoint subsets of $B+B$, whose cardinality
is at most the number $\binom{b+1}{2}$ of unordered pairs from $B$.
The inclusion $H\subseteq B+C$ gives $h\le bd$.
The primitive elements in $T=B\mathbin{\dot\cup}C$ are precisely
$B\cup(C\setminus K)$; adding the primitive $m$ gives
$e\ge1+b+d-k$. Finally $k\le d$ and
$k\le\binom{b+1}{2}$. If $k\ge4$, then $d\ge4$ and $2b+d=9$
implies $b\le2$, contradicting $\binom{b+1}{2}\le3$.
\end{proof}

\begin{lemma}\label{qf:compression}
The sets in \eqref{qf:partition-def} satisfy
\begin{equation}\label{qf:compressed-count}
 k+h\le\rho.
\end{equation}
Consequently
\begin{equation}\label{qf:compressed-arithmetic}
 4\delta-\rho\le 2b(b+1)-k+3h.
\end{equation}
\end{lemma}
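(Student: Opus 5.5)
The plan is to locate $K$ and the translate $H-m$ inside one half-open interval of length $\rho$, and to show that the two sets are disjoint there. At depth four, $c=4m-\rho$. The level intervals are $A_2\subseteq[2m-\rho,3m-\rho)$, $A_3\subseteq[3m-\rho,4m-\rho)=[c-m,c)$ and $A_4\subseteq[4m-\rho,5m-\rho)=[c,c+m)$.

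First I would place $K$. Lemma~\ref{qf:partition} gives $K\subseteq B+B$. Every element of $B=A_2$ is at least $2m-\rho$, so every $x\in K$ satisfies $x\ge 4m-2\rho$. Since $K\subseteq C=A_3$, also $x<4m-\rho$. Hence
\[
 K\subseteq I:=[4m-2\rho,\,4m-\rho),
\]
an interval containing exactly $\rho$ integers, and $K\subseteq S$.

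Next I would treat $H$. Lemma~\ref{qf:partition} gives $H\subseteq B+C$, so each $x\in H$ satisfies $x\ge(2m-\rho)+(3m-\rho)=5m-2\rho$. Since $x\in A_4$, also $x<5m-\rho$. Therefore $H-m\subseteq I$. Because $H\subseteq A$, each $x-m$ with $x\in H$ lies outside $S$. Translation is injective, so $H-m$ is a set of $h$ gaps inside $I$, while $K$ is a set of $k$ elements of $S$ inside $I$. The two sets are disjoint, and therefore $k+h\le|I|=\rho$, which is \eqref{qf:compressed-count}.

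For \eqref{qf:compressed-arithmetic}, I would start from $\delta=k+\ell+h$ in \eqref{qf:partition-card} and subtract $\rho\ge k+h$:
\[
 4\delta-\rho\le 4(k+\ell)+4h-(k+h)=4(k+\ell)-k+3h.
\]
The bound $k+\ell\le\binom{b+1}{2}$ from \eqref{qf:basic-counts} turns $4(k+\ell)$ into at most $2b(b+1)$, which completes the inequality.

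The only real obstacle is choosing the right interval and seeing that the two pieces meet it with opposite membership in $S$. The lower bounds $2m-\rho$ and $3m-\rho$ on the factors, which come from $A_1=\varnothing$, make the two length-$\rho$ windows coincide after translating $H$ down by $m$. The Ap\'ery property then separates them, since $K$ consists of elements of $S$ and $H-m$ consists of gaps.
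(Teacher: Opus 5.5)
Your proof is correct and is essentially the paper's argument. The paper sends each $z\in K\cup H$ to the residue of $z+2\rho$ modulo $m$, which lies in $[0,\rho)$ by the same level bounds you use, and separates $K$ from $H$ by uniqueness of Ap\'ery residues; that is the same fact as your observation that $K\subseteq S$ while $H-m$ consists of gaps. Your derivation of \eqref{qf:compressed-arithmetic} from $\delta=k+\ell+h$ and $k+\ell\le\binom{b+1}{2}$ matches the paper's.
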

\begin{proof}
Suppose $x\in A_i$, $y\in A_j$, and $z=x+y\in A_{i+j-1}$.
The level intervals give
\[
 (i+j)m\le z+2\rho<(i+j)m+\rho.
\]
Since $0\le\rho<m$, the residue of $z+2\rho$ modulo $m$ belongs
to $[0,\rho)$. This also shows that no such sum exists if $\rho=0$.

Each element of $K$ is a sum from $B+B$ at level three, so the
observation applies with $(i,j)=(2,2)$. Each element of $H$ is a sum
from $B+C$ at level four, so it applies with $(i,j)=(2,3)$.
The shifted residues of every element of $K\cup H$ thus lie in the
same set of $\rho$ residues. Distinct elements of $A$ have distinct
residues modulo $m$ by Lemma~\ref{lem:apery}; adding $2\rho$ preserves
this distinctness. Since $K$ and $H$ are disjoint, $k+h\le\rho$.

Using \eqref{qf:partition-card} and \eqref{qf:basic-counts}, we obtain
\[
 4\delta-\rho
 \le 3k+4\ell+3h
 =4(k+\ell)-k+3h
 \le 2b(b+1)-k+3h.
\]
\end{proof}

\begin{lemma}\label{qf:cubic-empty}
The remaining set $H$ satisfies
\begin{equation}\label{qf:cubic-bound}
 h\le b(d-k)+f(k),
\end{equation}
where $f$ is the function in Lemma~\ref{lem:shadow}.
\end{lemma}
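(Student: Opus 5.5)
We split $H$ according to whether each element has a factorization using a \emph{primitive} element of $C$. Put $C_0=C\setminus K$, the primitive elements of level three, so $|C_0|=d-k$. Let
\[
 H_1=H\cap(B+C_0),\qquad H_2=H\setminus H_1 .
\]
Trivially $|H_1|\le b(d-k)$. The task is then to show $|H_2|\le f(k)$, using Lemma~\ref{lem:shadow}. By \eqref{qf:sum-inclusions}, every element of $H_2$ lies in $B+K$. Since $K\subseteq B+B$, it follows that $H_2\subseteq B+B+B$, and $|B|=b\le4$ by \eqref{qf:empty-profile}. We also have $|K|=k\le3$ by \eqref{qf:primitive-empty}.

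To apply Lemma~\ref{lem:shadow} with this $B$, with $K$ as the quadratic set, and with $H_2$ as the cubic set, we must verify its hypothesis. Fix $x,y,z\in B$ with $h:=x+y+z\in H_2$; we need $x+y$, $x+z$, $y+z\in K$, and by symmetry it suffices to treat $x+y$.

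\emph{Step 1: $x+y\in A$.} Since $h\in A$ is a sum of the positive elements $x+y$ and $z$ of $S$, Lemma~\ref{lem:apery}(4) gives $x+y\in T$.

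\emph{Step 2: level of $x+y$.} By \eqref{qf:strict-level}, $\lambda(x+y)\ge3$. Applying \eqref{qf:strict-level} again, $\lambda(h)\ge\lambda(x+y)+1$. Because $h\in A_4$, this forces $\lambda(x+y)\le3$, so $x+y\in A_3=C$.

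\emph{Step 3: $x+y$ is not primitive.} It is decomposable by construction, so $x+y\in C\cap D=K$. (Step 2 already places it in $C$, and decomposability places it outside $C_0$.)

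Hence every pairwise sum lies in $K$. Lemma~\ref{lem:shadow} then gives $|H_2|\le f(k)$, and adding $|H_1|\le b(d-k)$ yields \eqref{qf:cubic-bound}.

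We expect the main obstacle to be the level bookkeeping in Step 2. It depends on the strict inequality \eqref{qf:strict-level} and not merely on \eqref{eq:level-add}. A further point is that $H_2$ is defined through every factorization, so an element admitting some factor in $C_0$ is counted only in $H_1$. This is harmless, since the two pieces are disjoint and each element of $H_2$ satisfies the hypothesis for every representation, as the shadow lemma requires.
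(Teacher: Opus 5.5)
Your proof is correct and follows the paper's argument essentially step for step. You split $H$ into $H\cap(B+C_0)$ and a remainder lying in $B+K\subseteq B+B+B$, verify the divisor hypothesis of Lemma~\ref{lem:shadow} by Ap\'ery factor closure and a level count, and conclude with the same bound. The only difference is cosmetic: you bound $\lambda(x+y)$ using \eqref{qf:strict-level}, whereas the paper uses \eqref{eq:level-add} together with $\lambda(z)=2$, which yields the same two-sided bound $3\le\lambda(x+y)\le 3$. So, contrary to your closing remark, \eqref{eq:level-add} alone suffices here.
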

\begin{proof}
Let $C_{\mathrm{prim}}=C\setminus K$, a set of $d-k$ primitive
elements, and put
\[
 H_3=H\setminus(B+C_{\mathrm{prim}}).
\]
Since $H\subseteq B+C$ and $K\subseteq B+B$, every element of $H_3$
is a sum of three elements of $B$. Consider any representation
$x+y+z\in H_3$ with $x,y,z\in B$. By Ap\'ery factor closure,
$x+y\in A$. Its level is at least $2+2-1=3$.
On the other hand, $x+y+z$ has level four and $z$ has level two, so
\eqref{eq:level-add} gives
$4\ge\lambda(x+y)+2-1$, forcing $\lambda(x+y)\le3$.
Thus $x+y\in A_3\cap D=K$. The same argument applies to the other
two pair sums, for every representation.

Lemma~\ref{lem:shadow} applies because $b\le4$ and $k\le3$, and
gives $|H_3|\le f(k)$. Since
$H\subseteq (B+C_{\mathrm{prim}})\cup H_3$, we conclude
$h\le b(d-k)+f(k)$.
\end{proof}

\begin{lemma}\label{qf:empty}
If $A_1=\varnothing$, then $4\delta\le9e+\rho$.
\end{lemma}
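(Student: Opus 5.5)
The plan is a finite case analysis over the five profiles allowed by \eqref{qf:empty-profile}. In each case I would combine the compressed inequality \eqref{qf:compressed-arithmetic} with the cubic bound \eqref{qf:cubic-bound} and the primitive count \eqref{qf:primitive-empty}. Since $4\delta-\rho\le 2b(b+1)-k+3h$, it suffices to show
\[
 2b(b+1)-k+3\bigl(b(d-k)+f(k)\bigr)\le 9e
\]
with $e\ge\max\{4,\,1+b+d-k\}$. In one leftover case I would instead use a direct count of $\delta$.

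The solutions of $2b+d=9$ with $0\le b\le4$ are
\[
 (b,d)\in\{(0,9),(1,7),(2,5),(3,3),(4,1)\},
\]
and $k\le\min\{3,d,\binom{b+1}{2}\}$ by Lemma~\ref{qf:partition}.

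\emph{Case $b=0$.} Here $K\subseteq B+B=\varnothing$ and $H\subseteq B+C=\varnothing$, so $\delta=0$ and the claim is trivial.

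\emph{Case $b=1$.} Here $k\le1$. The left side is at most $4+3\cdot7=25$, while $e\ge 8$.

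\emph{Case $b=2$.} Here $0\le k\le3$. The left side is $12-k+3(10-2k+f(k))$. I would compare it row by row with $9(8-k)$ for $k\le3$; for example, $k=3$ gives $33\le45$.

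\emph{Case $b=3$.} The left side is $24-k+3(9-3k+f(k))$. The four values of $k$ give $51,44,37,33$, against $9e\ge 63,54,45,36$ respectively. The case $k=3$ uses the hypothesis $e\ge4$.

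The case I expect to be the main obstacle is $b=4$, $d=1$, where $k\le1$.

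\emph{Subcase $k=0$.} The compressed bound gives $40+12=52\le 9\cdot6$, using $e\ge 1+4+1=6$.

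\emph{Subcase $k=1$.} Here the compressed route fails: it gives $39+3=42$, but only $e\ge5$ is available from \eqref{qf:primitive-empty}. Here the subtraction of $\rho$ is wasteful, so I would bound $\delta$ directly. From \eqref{qf:basic-counts}, $k+\ell\le\binom{5}{2}=10$, and from \eqref{qf:cubic-bound}, $h\le 4\cdot0+f(1)=1$. Hence $\delta\le 11$, and $4\delta\le44<45\le 9e$.

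I would present the verification as a short table indexed by $(b,k)$, listing the bound on $h$, the lower bound on $e$, and the resulting comparison. The exceptional row $(4,1)$ would be marked as using the direct estimate $\delta\le k+\ell+h$ in place of \eqref{qf:compressed-arithmetic}.
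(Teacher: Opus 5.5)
Your proof is correct and follows the paper's approach: the paper tabulates the same bound $2b(b+1)-k+3\min\{bd,\,b(d-k)+f(k)\}$ against $9\max\{4,\,1+b+d-k\}$ over every admissible $(b,d,k)$. One correction is needed: the compressed route does not fail at $(b,k)=(4,1)$, since $42\le 45=9\cdot 5$ (this is the paper's last row), so your direct estimate $\delta\le 11$ is valid there but unnecessary.
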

\begin{proof}
For fixed $b,d,k$, set
\[
 H_{\max}=\min\{bd,\ b(d-k)+f(k)\},\qquad
 E_{\min}=\max\{4,\ 1+b+d-k\},
\]
and $F_{\max}=2b(b+1)-k+3H_{\max}$.
The preceding lemmas imply
\[
 h\le H_{\max},\qquad e\ge E_{\min},\qquad
 4\delta-\rho\le F_{\max}.
\]
All allowed parameters occur in the following table. It lists every
$b\in\{0,1,2,3,4\}$ with $d=9-2b$ and every integer
$0\le k\le\min\{d,\binom{b+1}{2},3\}$.
\[
\begin{array}{rrr|rr|rr}
 b&d&k&H_{\max}&E_{\min}&F_{\max}&9E_{\min}\\\hline
 0&9&0&0&10&0&90\\
 1&7&0&7&9&25&81\\
 1&7&1&7&8&24&72\\
 2&5&0&10&8&42&72\\
 2&5&1&9&7&38&63\\
 2&5&2&8&6&34&54\\
 2&5&3&8&5&33&45\\
 3&3&0&9&7&51&63\\
 3&3&1&7&6&44&54\\
 3&3&2&5&5&37&45\\
 3&3&3&4&4&33&36\\
 4&1&0&4&6&52&54\\
 4&1&1&1&5&42&45
\end{array}
\]
Each row follows by substitution of $f(0),f(1),f(2),f(3)=0,1,2,4$.
In every row $F_{\max}\le9E_{\min}$. Therefore
$4\delta-\rho\le9e$, as required.
\end{proof}

\begin{proposition}\label{qf:main}
If $n=13$, $q=4$, and $e\ge4$, then $c\le13e$.
\end{proposition}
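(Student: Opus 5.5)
The plan is to reduce the statement to the defect form \eqref{eq:defect} and then split on whether the first Ap\'ery level is empty. With $q=4$ and $n=13$, the equivalence \eqref{eq:defect} turns the target $c\le13e$ into $4\delta\le9e+\rho$. This is exactly the second half of \eqref{qf:profile-target}. Two ingredients feed it: the identity $m=e+\delta$ from \eqref{eq:embedding}, and $c=qm-\rho=4m-\rho$ from \eqref{eq:depth}. Substituting gives
\[
 c\le13e\iff 4(e+\delta)-\rho\le13e\iff 4\delta\le9e+\rho .
\]
I would rederive this one-line algebra explicitly at $q=4$ so that the proposition visibly rests only on Lemma~\ref{lem:apery} and the level definitions.

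Next I would split according to $A_1=\{a\in A:\lambda(a)=1\}$.

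\emph{Case $A_1\ne\varnothing$.} Lemma~\ref{qf:nonempty}, applied under the standing hypotheses $n=13$, $q=4$ and $e\ge4$, gives $4\delta\le9e$. Since $\rho\ge0$, this implies $4\delta\le9e+\rho$.

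\emph{Case $A_1=\varnothing$.} Lemma~\ref{qf:empty} gives $4\delta\le9e+\rho$ directly.

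These two cases exhaust the possibilities, so the displayed equivalence yields $c\le13e$.

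The only point needing care is to confirm that the section's standing hypotheses match those of the proposition. Both lemmas assume $q=4$, $n=13$ and $e\ge4$. The profile relation $3a+2b+d=9$ is the case $q=4$ of \eqref{eq:weighted-profile}. Lemma~\ref{qf:empty} also uses $e\ge E_{\min}\ge4$ in its table, which is supplied by the hypothesis $e\ge4$ together with \eqref{qf:primitive-empty}. No step is a genuine obstacle here; all the substantive work has been done in the two lemmas, and the proposition is their assembly through \eqref{eq:defect}.
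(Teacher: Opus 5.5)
Your proposal is correct and matches the paper's own proof: you reduce $c\le13e$ to $4\delta\le9e+\rho$ via \eqref{eq:defect}, using $m=e+\delta$ and $c=4m-\rho$. You then apply Lemma~\ref{qf:nonempty} (with $\rho\ge0$) when $A_1\ne\varnothing$ and Lemma~\ref{qf:empty} when $A_1=\varnothing$, and your check that the standing hypotheses $n=13$, $q=4$, $e\ge4$ feed both lemmas is all the assembly requires.
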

\begin{proof}
When $A_1\ne\varnothing$, Lemma~\ref{qf:nonempty} gives
$4\delta\le9e\le9e+\rho$. When $A_1=\varnothing$,
Lemma~\ref{qf:empty} gives the same required inequality directly.
The conclusion follows from \eqref{eq:defect}.
\end{proof}

\section{Depth at most three}\label{sd:section}

Throughout this section, $n=13$.  We use the Ap\'ery and level notation
introduced above.  For finite sets of integers, write
$X+Y=\{x+y:x\in X,\ y\in Y\}$, $2X=X+X$, and $3X=(X+X)+X$.
In particular, $2X$ denotes a sumset, rather than a dilation.

\begin{lemma}\label{sd:low-levels}
Every element of $A_1$ is primitive.  Every decomposable element of
$A_2$ is a sum of two elements of $A_1$.
\end{lemma}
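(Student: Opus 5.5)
The plan is to derive both assertions from the strict level inequality \eqref{qf:strict-level}, together with the Ap\'ery factor closure of Lemma~\ref{lem:apery}(4) and the additivity estimate \eqref{eq:level-add}. None of these depends on $q=4$, so they hold at every depth. The first fact I would record is that every positive element of $S$ has level at least one. Indeed, $x\ge m$ gives $x+\rho\ge m$, hence $\lambda(x)\ge1$; equivalently, $A_0=\{0\}$ and $0$ is the only element of level $0$.

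\emph{First assertion.} Suppose $x\in A_1$ were decomposable, say $x=u+v$ with $u,v\in S$ positive. Since $v\ge m$, \eqref{qf:strict-level} gives $\lambda(x)\ge\lambda(u)+1\ge2$, which contradicts $\lambda(x)=1$. So every element of $A_1$ is primitive. This step needs nothing beyond the multiplicity bound, not even Ap\'ery closure.

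\emph{Second assertion.} Let $z\in A_2$ be decomposable, and choose any factorization $z=u+v$ with $u,v\in S$ positive. By Lemma~\ref{lem:apery}(4), both $u$ and $v$ lie in $T\subseteq A$. Applying \eqref{qf:strict-level} in each direction gives $\lambda(u)\le\lambda(z)-1=1$ and $\lambda(v)\le1$. Both factors are positive, so their levels are at least one, and therefore $\lambda(u)=\lambda(v)=1$. Thus $u,v\in A_1$ and $z\in A_1+A_1$. In fact the argument shows more: \emph{every} decomposition of $z$ uses two elements of $A_1$, which is stronger than the statement asks.

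I do not expect a real obstacle here; the argument is routine. The only points needing care are the edge cases where a factor might equal $0$, which is excluded by positivity, and the requirement that the factors lie in $A$ rather than merely in $S$, which is exactly what Lemma~\ref{lem:apery}(4) supplies. I would also note that if $q\le1$, so that level $2$ meets no Ap\'ery element below the conductor, the second claim is simply vacuous or still covered by the same argument.
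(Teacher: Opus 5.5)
Your proof is correct and follows essentially the same route as the paper. The paper writes out the multiplicity bound $x,y\ge m$ directly as inequalities on $x+\rho$ and $y+\rho$ rather than citing \eqref{qf:strict-level}. Like you, it uses the Ap\'ery factor property of Lemma~\ref{lem:apery} to place both factors in $A$ and then reads off that they have level one; you are also right that \eqref{qf:strict-level} does not depend on the depth-four hypothesis, so citing it here is legitimate.
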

\begin{proof}
Every positive semigroup element is at least $m$.  An element $z\in A_1$
satisfies $z+\rho<2m$, whereas a sum of two positive semigroup elements
is at least $2m$.  Thus $z$ is primitive.

Suppose that $z\in A_2$ and $z=x+y$ with $x,y\in S\setminus\{0\}$.
The factor property of the Ap\'ery set, recalled in
Lemma~\ref{lem:apery}, gives $x,y\in A$.  Since $x,y\ge m$ and
$x+y+\rho<3m$, we have
\[
 m\le x+\rho<2m,\qquad m\le y+\rho<2m.
\]
Hence $x,y\in A_1$.
\end{proof}

\begin{proposition}\label{sd:depth-two}
If $q\le2$, then $c\le13e$.
\end{proposition}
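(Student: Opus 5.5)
The plan is to use the reformulation \eqref{eq:defect}: for $n=13$ the target $c\le13e$ is equivalent to $q\delta\le(13-q)e+\rho$. I would read off the level profile from the weighted identity \eqref{eq:weighted-profile} of Lemma~\ref{lem:weights} and split into the two values $q=1$ and $q=2$ allowed by \eqref{eq:q-range}.

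\emph{Case $q=1$.} Here \eqref{eq:weighted-profile} reads $13=1$, since the sum over $1\le i\le q-1$ is empty. So this case cannot occur when $n=13$. Directly: $c\le m$ forces $L=\{0\}$ and hence $n=1$.

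\emph{Case $q=2$.} Now \eqref{eq:weighted-profile} gives $13=2+\alpha_1$, so $\alpha_1=11$. Every Ap\'ery element has level at most $q=2$, so $T=A\cap(0,c)=A_1$ by \eqref{eq:level-left}. Hence $t=11$.

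\begin{itemize}
\item By Lemma~\ref{sd:low-levels}, all eleven elements of $A_1$ are primitive. Together with $m\notin A$, \eqref{eq:left-primitives} gives $e\ge1+11=12$.
\item The same lemma shows that every decomposable Ap\'ery element lies in $A_2$ and is a sum of two elements of $A_1$, repetition allowed. Therefore $\delta\le\binom{12}{2}=66$. This is exactly the bound $\delta\le M\le\binom{t+1}{2}$ of Lemma~\ref{lem:pairs} with $t=11$.
\item Consequently $q\delta=2\delta\le132=11\cdot12\le11e\le11e+\rho$, which is the required inequality $q\delta\le(13-q)e+\rho$.
\end{itemize}

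The conclusion then follows from \eqref{eq:defect}.

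I do not expect a real obstacle here. The only point needing care is that the case is tight: the pair bound and the primitive bound must be used at full strength. In particular, $e\ge12$ must come from \emph{all} of $A_1$ being primitive, not just from the crude bound $t\le 13-q$. It is also worth noting that $\rho\ge0$ is not needed, since the inequality already holds with $\rho=0$.
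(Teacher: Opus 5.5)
Your proposal is correct and follows the paper's proof essentially step for step. The weighted identity rules out $q=1$ and forces $\alpha_1=11$ when $q=2$; primitivity of $A_1$ gives $e\ge12$; pair counting gives $\delta\le\binom{12}{2}=66$. The only difference is cosmetic: you finish through the equivalence \eqref{eq:defect}, whereas the paper writes $c=2(e+\delta)-\rho\le2e+132\le13e$ directly, which is the same computation.
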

\begin{proof}
Lemma~\ref{lem:weights} gives $q\ge1$. If $q=1$, its weighted
identity gives $n=1$, contrary to $n=13$. For $q=2$, the same
identity reads $13=2+\alpha_1$, so $\alpha_1=11$.
The eleven elements of $A_1$, together with $m$, are distinct primitives;
here $m\notin A$.  Consequently $e\ge12$.

Every decomposition of an element of $D$ has both positive factors in
$T=A_1$, by Lemma~\ref{lem:apery}.  Counting unordered pairs with
repetition therefore gives
\[
 \delta\le |2A_1|\le\binom{12}{2}=66.
\]
It follows that
\[
 c=2(e+\delta)-\rho\le2e+132\le13e,
\]
where the last inequality uses $e\ge12$.
\end{proof}

We now assume $q=3$.  Set
\begin{equation}\label{sd:sets}
 \begin{gathered}
 a=\alpha_1,\qquad b=\alpha_2,\qquad
 K=A_2\cap D,\qquad C=A_2\setminus K,\\
 k=|K|,\qquad p=|C|,\qquad
 H=(3A_1)\cap A,\qquad U=(2C)\cap A,\qquad
 h=|H|,\quad u=|U|.
 \end{gathered}
\end{equation}
The weighted identity and Lemma~\ref{sd:low-levels} imply
\begin{equation}\label{sd:profile}
 2a+b=10,\qquad k+p=b,\qquad
 K\subseteq2A_1,\qquad
 k\le\binom{a+1}{2}.
\end{equation}
All elements of $C$ are primitive by its definition.  The sets
$\{m\}$, $A_1$, and $C$ are pairwise disjoint sets of primitives, so
\begin{equation}\label{sd:primitives}
 e\ge1+a+p.
\end{equation}

\begin{lemma}\label{sd:cover}
With the notation in \eqref{sd:sets},
\begin{equation}\label{sd:delta-bound}
 \delta\le\binom{a+1}{2}+ap+h+u,
 \qquad u\le\rho,\qquad 2u\le p(p+1).
\end{equation}
\end{lemma}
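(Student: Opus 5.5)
The plan is to classify each decomposable Ap\'ery element by the levels of its factors, as in the proof of Lemma~\ref{qf:partition}, and to handle the bound $u\le\rho$ with the residue argument of Lemma~\ref{qf:compression}.

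\textbf{Setup.} Since $q=3$, equation \eqref{eq:level-left} gives $T=A_1\mathbin{\dot\cup}A_2$. Every element of $A$ has level at most $3$. By Lemma~\ref{sd:low-levels}, $D\cap A_1=\varnothing$ and $D\cap A_2=K\subseteq 2A_1$. Hence
\[
 D=K\mathbin{\dot\cup}(A_3\cap D).
\]

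\textbf{Classifying $A_3\cap D$.} Take $z\in A_3\cap D$ and any decomposition $z=x+y$. By Lemma~\ref{lem:apery}(4), both factors lie in $T$, so each has level $1$ or $2$. We sort by the levels of the factors.
\begin{enumerate}
\item Both $x,y\in A_1$. Then $z\in 2A_1$.
\item $x\in A_1$ and $y\in C$. Then $z\in A_1+C$.
\item $x\in A_1$ and $y\in K$. Write $y=x'+x''$ with $x',x''\in A_1$. Then $z=x+x'+x''\in 3A_1\cap A=H$.
\item Both $y,z'\in A_2$ with one of them, say $y$, in $K$. This case is impossible. Write $y=x_1+x_2$ with $x_1,x_2\in A_1$. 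Then $x_2$ and the other factor $w$ give a partial sum $x_2+w$ that divides $z$. By the Ap\'ery factor property it lies in $A$, and as a factor of $z$ it lies in $T$, so its level is at most $2$. But \eqref{qf:strict-level} gives $\lambda(x_2+w)\ge\lambda(w)+1=3$, a contradiction.
\item Both factors in $C$. Then $z\in 2C\cap A=U$.
\end{enumerate}

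\textbf{Counting.} The sets $K$ and $A_3\cap 2A_1$ are disjoint subsets of $2A_1$. Together they therefore have at most $\binom{a+1}{2}$ elements, the number of unordered pairs with repetition from $A_1$. The set $A_1+C$ has at most $ap$ values. This yields
\[
 \delta\le\binom{a+1}{2}+ap+h+u.
\]
The bound $2u\le p(p+1)$ holds because $|2C|\le\binom{p+1}{2}$.

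\textbf{The bound $u\le\rho$.} This is the step needing care, and I would copy the argument of Lemma~\ref{qf:compression}. An element of $U$ has level at least $2+2-1=3$ by \eqref{eq:level-add}, and at most $q=3$ as an Ap\'ery element, so it lies in $A_3$. Thus each $z\in U$ is a sum $x+y$ with $x,y\in A_2$ and $z\in A_{2+2-1}$. The level intervals then give
\[
 4m\le z+2\rho<4m+\rho,
\]
so $z+2\rho$ has residue in $[0,\rho)$ modulo $m$. By Lemma~\ref{lem:apery}(1), distinct Ap\'ery elements have distinct residues, and adding $2\rho$ preserves distinctness. Hence $u\le\rho$. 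In particular $U=\varnothing$ when $\rho=0$.
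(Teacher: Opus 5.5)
Your proof is correct and follows the paper's route: the same factor-level case split gives the cover $D\subseteq 2A_1\cup(A_1+C)\cup H\cup U$, and the same interval estimate $4m\le z+2\rho<4m+\rho$ gives $u\le\rho$. The differences are minor: you exclude a decomposable factor in the $A_2+A_2$ case via the Ap\'ery factor property and \eqref{qf:strict-level} rather than the paper's size bound $z\ge 4m-\rho=c+m$; your passage to residues modulo $m$ is unnecessary, since the translation $z\mapsto z+2\rho-4m$ already injects $U$ into $[0,\rho)$; and the stray $z'$ in your case~4 should read as the second factor $w$.
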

\begin{proof}
We first prove the covering inclusion
\begin{equation}\label{sd:cover-inclusion}
 D\subseteq 2A_1\ \cup\ (A_1+C)\ \cup\ H\ \cup\ U.
\end{equation}
Write $z\in D$ as $z=x+y$ with both factors positive.  Both belong to
$T=A_1\cup A_2$.  If both lie in $A_1$, then $z\in2A_1$.
If one lies in $A_1$ and the other lies in $C$, then $z\in A_1+C$.
If one lies in $A_1$ and the other lies in $K$, split the latter into
two elements of $A_1$ using Lemma~\ref{sd:low-levels}; then $z\in H$.

It remains to consider $x,y\in A_2$.  If, for example, $x$ were
decomposable, then $x\ge2m$ and $y\ge2m-\rho$.  This would give
\[
 z\ge4m-\rho=c+m,
\]
contrary to the upper bound for an Ap\'ery element.  Thus both factors
belong to $C$, and $z\in U$.  This proves \eqref{sd:cover-inclusion}.
Taking cardinalities, and allowing overlaps in this cover, gives the
first inequality in \eqref{sd:delta-bound}.

For $z\in U$, its two factors in $C\subseteq A_2$ give
$z\ge4m-2\rho$.  The Ap\'ery bound gives $z<4m-\rho$.
Thus translation by $2\rho-4m$ injects $U$ into the integer interval
$[0,\rho)$, which proves $u\le\rho$.  Finally,
$U\subseteq2C$ and unordered-pair counting give
$u\le\binom{p+1}{2}$.
\end{proof}

\begin{lemma}\label{sd:cubics}
The integer $k$ lies in $\{0,1,2,3,4\}$, and
\begin{equation}\label{sd:cubic-bound}
 h\le f(k),\qquad
 f(0)=0,\quad f(1)=1,\quad f(2)=2,\quad f(3)=4,\quad f(4)=7.
\end{equation}
\end{lemma}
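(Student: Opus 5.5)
The plan is to prove the two claims in order: first the range of $k$, then the cubic bound. For the range, I would use \eqref{sd:profile}. It gives $k\le b=10-2a$ and $k\le\binom{a+1}{2}$. If $k\ge5$, then $b\ge5$ forces $a\le2$, and then $\binom{a+1}{2}\le3$, a contradiction. So $k\le4$. The same two bounds also show that $k=4$ can only occur when $a=3$ and $b=4$, although the argument below does not need this.

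For the bound on $h$, I would verify the hypothesis of Lemma~\ref{lem:shadow} with $B=A_1$, $K=A_2\cap D$ and $H=(3A_1)\cap A$. Take any representation $x+y+z\in H$ with $x,y,z\in A_1$.
\begin{enumerate}
\item Ap\'ery factor closure (Lemma~\ref{lem:apery}) applied to $(x+y)+z$ gives $x+y\in A$.
\item Since $z\ge m$ is positive, \eqref{qf:strict-level} gives $\lambda(x+y)\le\lambda(x+y+z)-1\le q-1=2$.
\item Again by \eqref{qf:strict-level}, $\lambda(x+y)\ge\lambda(x)+1=2$.
\end{enumerate}
Hence $x+y\in A_2$, and it is decomposable, so $x+y\in K$. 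By symmetry the same holds for $x+z$ and $y+z$. This holds for every representation, which is exactly the hypothesis of the shadow lemma. Note that \eqref{qf:strict-level} was derived for general $u$ and positive $v$, so it is available at depth three.

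The main obstacle is that Lemma~\ref{lem:shadow} is stated only for $|B|\le4$ and $k\le3$, whereas here $a$ may be $5$ (when $b=0$, so $k=0$), and $k=4$ is also needed. I would not invoke the lemma as a black box. Instead I would rerun its proof. The base-five rank argument only uses that exponents are at most three, so it works for any number $r$ of variables. The least-rank replacement argument then shows, unchanged, that $h$ is at most the number of cubic monomials all of whose quadratic divisors lie in a fixed set $\mathcal Q$ of $k$ quadratic monomials. That count, loops plus loop-edge incidences plus triangles of ordinary edges, depends only on the loop/edge graph formed by $\mathcal Q$ and not on $|B|$. Therefore the values $f(0),\dots,f(3)$ from the table in Lemma~\ref{lem:shadow} carry over directly.

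For $k=4$ I would enumerate by the number $\ell$ of loops:
\begin{enumerate}
\item $\ell=4$ gives $4$.
\item $\ell=3$ with one edge joining two loop vertices gives $3+2=5$.
\item $\ell=2$ with two edges gives at most $2+2+1=5$.
\item $\ell=1$ with three edges gives at most $1+2+1=4$ (a triangle through the loop vertex, or a star giving $1+3$).
\item $\ell=0$ with four edges gives at most one triangle.
\end{enumerate}
So the count is at most $5\le7=f(4)$, which establishes \eqref{sd:cubic-bound}.
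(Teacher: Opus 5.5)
Your proof is correct, but it handles the restriction $|B|\le4$ and the case $k=4$ differently from the paper.

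\textbf{Range of $k$ and the divisor condition.} These match the paper up to cosmetics. You use \eqref{qf:strict-level}, which is indeed depth-independent, where the paper uses the interval bound $2m\le x+y+\rho<3m$.

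\textbf{The paper's route.} It never reopens Lemma~\ref{lem:shadow}.
\begin{itemize}
\item For $k=0$ it notes directly that $H=\varnothing$.
\item For $1\le k\le3$ it observes that $k>0$ forces $b>0$, hence $a\le4$. So the lemma applies as stated.
\item For $k=4$ it uses the forced profile $a=3$. It deletes one $z_0\in K$ and applies the $k=3$ case to $K\setminus\{z_0\}$ and $H\setminus(z_0+A_1)$. Adding $|z_0+A_1|=3$ gives $h\le4+3=7$.
\end{itemize}

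\textbf{Your route.} You re-verify that the base-five rank and least-rank replacement argument is independent of the number of variables. You then extend the loop, incidence and triangle count to four quadratic monomials. Your enumeration over $\ell=0,\dots,4$ is complete; the maximum is $5$, attained for example by three loops plus one edge between two loop vertices. This gives $h\le5$.

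\textbf{Comparison.} The paper's route keeps the shadow lemma a black box. Its cost is a case analysis on the profile, and it uses $a=3$. Your route costs a re-check of the lemma's proof and a new table. In exchange it needs no information about $a$ and gives the sharper value $f(4)=5$. That would raise the slack $R(3,4)$ in Proposition~\ref{sd:depth-three} from $1$ to $7$, though the statement only requires $7$.

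One remark: the generalization to $|B|=5$ was unnecessary for $k\le3$. As you yourself noticed, $a=5$ occurs only with $k=0$, where $H$ is empty outright.
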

\begin{proof}
The bounds in \eqref{sd:profile} imply $0\le a\le5$ and
$k\le\min\{10-2a,\binom{a+1}{2}\}$.  If $k\ge5$, then
$b\ge5$, hence $a\le2$ and $k\le3$, a contradiction.  Moreover,
$k>0$ implies $a>0$ because $K\subseteq2A_1$, and $b>0$ then
implies $a\le4$.  If $k=4$, these same inequalities force
$a=3$, $b=4$, and $p=0$.

We verify the divisor condition needed for the small-shadow lemma.
Suppose $x,y,t\in A_1$ and $x+y+t\in H$.  The factor property of
the Ap\'ery set gives $x+y\in A$, and the positivity of $t$ gives
$x+y<c$.  Also $x+y\ge2m$.  Since $c=3m-\rho$, we obtain
\[
 2m\le x+y+\rho<3m.
\]
Thus $x+y\in A_2$, and it is decomposable, so $x+y\in K$.
By symmetry this holds for every pair of factors in every
representation of every element of $H$ as a sum of three elements
of $A_1$.

If $k=0$, any such representation would yield an element of the empty
set $K$.  Hence $H$ is empty.  If $1\le k\le3$, we have
$1\le|A_1|\le4$, $K\subseteq2A_1$, $H\subseteq3A_1$, and the
divisor condition just proved.  The numerical form of
Lemma~\ref{lem:shadow} therefore gives the bounds $1,2,4$.

Finally suppose $k=4$, so $a=3$.  Choose $z_0\in K$ and put
\[
 K'=K\setminus\{z_0\},\qquad
 H'=H\setminus(z_0+A_1).
\]
Then $|K'|=3$.  In any representation $x+y+t\in H'$ with
$x,y,t\in A_1$, each pair sum belongs to $K$ and cannot equal $z_0$:
for instance, $x+y=z_0$ would put the represented value in
$z_0+A_1$.  Thus the divisor condition holds with $K'$ and $H'$.
Lemma~\ref{lem:shadow} gives $|H'|\le4$, while
$|z_0+A_1|=3$.  Consequently $h\le4+3=7$.
\end{proof}

\begin{proposition}\label{sd:depth-three}
If $q=3$, then $c\le13e$.
\end{proposition}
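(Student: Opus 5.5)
The proof goes through the equivalence \eqref{eq:defect}. With $q=3$ it reads $c\le13e\iff 3\delta\le10e+\rho$. I would bound $\delta$ from above with Lemma~\ref{sd:cover} and Lemma~\ref{sd:cubics}, bound $e$ from below with \eqref{sd:primitives}, and then check the resulting inequality over the finitely many profiles allowed by \eqref{sd:profile}.

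\textbf{Absorbing $\rho$.} Lemma~\ref{sd:cover} gives both $u\le\rho$ and $2u\le p(p+1)$. Splitting $3u=u+2u$ gives
\[
 3u\le\rho+p(p+1).
\]
Substituting this and $h\le f(k)$ into $\delta\le\binom{a+1}{2}+ap+h+u$, the $\rho$ cancels on both sides. It therefore suffices to prove
\[
 3\binom{a+1}{2}+3ap+3f(k)+p(p+1)\le10(1+a+p)
\]
for every admissible triple $(a,k,p)$. Here $2a+b=10$, $p=b-k$, and $k\le\min\{b,\binom{a+1}{2}\}$. Lemma~\ref{sd:cubics} further restricts $k$ to $\{0,\dots,4\}$, with $k=4$ only at $(a,b,p)=(3,4,0)$.

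\textbf{Case check.} For each $a\in\{0,\dots,5\}$ I would list the admissible $k$ and verify the inequality row by row. For fixed $a$, raising $k$ by one lowers $p$ by one. This reduces the left side by $3a+2p$ (at the old value of $p$), offset by the increase $3(f(k+1)-f(k))\le9$, while the right side drops by $10$. So for each $a$ it is natural to check $k=0$ first and then the few larger $k$.

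The representative values are:
\begin{itemize}
 \item $a=0$, $p=10$: $110\le110$.
 \item $a=1$, $k=0$, $p=8$: $99\le100$.
 \item $a=2$, $k=0$, $p=6$: $87\le90$.
 \item $a=3$, $k=0$, $p=4$: $74\le80$.
 \item $a=3$, $k=4$, $p=0$: $18+21=39\le40$.
 \item $a=4$, $k=0$, $p=2$: $60\le70$.
 \item $a=5$, $p=0$: $45\le60$.
\end{itemize}
The remaining rows (for example $a=1,k=1$ or $a=2,k\le3$) are similar routine substitutions.

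\textbf{Main obstacle.} The hard part is the near-tight rows: $a=0$ (equality), $a=1$ with $k=0$, and the $k=4$ row. These work only because the $\rho$-absorption above uses $u\le\rho$ for exactly one copy of $u$ and the pair bound for the other two. One must also confirm that the cover in Lemma~\ref{sd:cover} does not double-count against $k$. This holds because $K\subseteq2A_1\cap D$ is already inside the $\binom{a+1}{2}$ term, so $k$ enters only through $p$ and $f(k)$. If a row failed, the fallback would be the sharper primitive count $e\ge4$ or a collision argument as in Lemma~\ref{qf:graph}; by the computation above, neither should be needed.
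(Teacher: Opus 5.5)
Your proposal is correct and is essentially the paper's proof: the same splitting $3u=u+2u\le\rho+p(p+1)$, the same lower bound $e\ge1+a+p$, and the same reduction to checking $10(1+a+p)-\bigl(3\binom{a+1}{2}+3ap+3f(k)+p(p+1)\bigr)\ge0$ over the admissible pairs $(a,k)$. The paper tabulates all sixteen rows, whereas you computed only seven; the nine you called routine do all hold, with slack between $7$ and $14$.
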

\begin{proof}
We finish with explicit arithmetic for the profiles in
\eqref{sd:profile}.  For an admissible pair $(a,k)$ set
$b=10-2a$, $p=b-k$, and define
\[
 R(a,k)=10(1+a+p)
 -\left(3\binom{a+1}{2}+3ap+3f(k)+p(p+1)\right).
\]
There are precisely sixteen possible quadruples $(a,b,k,p)$ under
the necessary constraints \eqref{sd:profile}.  To enumerate them,
take $a=0,1,\ldots,5$, and then take every integer
$0\le k\le\min\{10-2a,\binom{a+1}{2}\}$; the other two entries
are determined.  Substitution gives the following complete table.
\[
\begin{array}{c|r|r|r|r|r}
 a&b&k&p&f(k)&R(a,k)\\\hline
 0&10&0&10&0&0\\
 1&8&0&8&0&1\\
 1&8&1&7&1&7\\
 2&6&0&6&0&3\\
 2&6&1&5&1&8\\
 2&6&2&4&2&11\\
 2&6&3&3&4&9\\
 3&4&0&4&0&6\\
 3&4&1&3&1&10\\
 3&4&2&2&2&12\\
 3&4&3&1&4&9\\
 3&4&4&0&7&1\\
 4&2&0&2&0&10\\
 4&2&1&1&1&13\\
 4&2&2&0&2&14\\
 5&0&0&0&0&15
\end{array}
\]
Every entry in the last column is nonnegative.  Since
$3u=2u+u\le p(p+1)+\rho$, the preceding lemmas yield
\begin{align*}
 3\delta
 &\le3\binom{a+1}{2}+3ap+3f(k)+p(p+1)+\rho\\
 &\le10(1+a+p)+\rho\le10e+\rho.
\end{align*}
Using $m=e+\delta$ and $c=3m-\rho$ now gives $c\le13e$.
\end{proof}

\begin{theorem}\label{sd:theorem}
If $n=13$ and $q\le3$, then $c\le13e$.
\end{theorem}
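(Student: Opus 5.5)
Theorem~\ref{sd:theorem} is the conjunction of the two preceding propositions, so the plan is a case split on the depth $q$. By Lemma~\ref{lem:weights}, the hypothesis $n=13$ forces $c>0$ and $1\le q\le13$. Under the assumption $q\le3$, this leaves exactly three values, $q\in\{1,2,3\}$.

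First I would handle $q\le2$ by invoking Proposition~\ref{sd:depth-two}. Its proof already disposes of $q=1$: the weighted identity \eqref{eq:weighted-profile} would then give $n=1$, contradicting $n=13$. It also treats $q=2$, where $\alpha_1=11$ gives $e\ge12$ and $\delta\le66$, and hence $c=2(e+\delta)-\rho\le13e$.

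Next I would handle $q=3$ by invoking Proposition~\ref{sd:depth-three}. That proof rests on three ingredients. The covering bound of Lemma~\ref{sd:cover} controls $\delta$. The cubic-shadow bound $h\le f(k)$ comes from Lemma~\ref{sd:cubics}, which uses Lemma~\ref{lem:shadow} together with the extra $k=4$ argument. The sixteen-row table then checks $R(a,k)\ge0$. Combining these gives $3\delta\le10e+\rho$, which is equivalent to $c\le13e$ via \eqref{eq:defect} with $q=3$.

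Since the three values of $q$ are exhaustive, the theorem follows. No step is a genuine obstacle here, as all the work sits in the cited propositions. The only point needing care is the exhaustiveness of the split, namely that $q\ge1$ because $c>0$, and this is supplied by \eqref{eq:q-range}.
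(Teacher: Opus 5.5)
Your proposal is correct and is exactly the paper's argument: the theorem is obtained by combining Proposition~\ref{sd:depth-two}, which covers $q\le2$ (with $q=1$ excluded by the weighted identity and $q\ge1$ from \eqref{eq:q-range}), with Proposition~\ref{sd:depth-three} for $q=3$. Your identification of the $q=3$ conclusion $3\delta\le10e+\rho$ as the $q=3$ instance of \eqref{eq:defect} is also accurate.
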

\begin{proof}
Combine Propositions~\ref{sd:depth-two} and~\ref{sd:depth-three}.
\end{proof}

\section{Depth at least five}\label{hd:section}

We continue to assume $n=13$, and in this section assume $e\ge4$.
Let $M$ be the eligible unordered-pair count from Lemma~\ref{lem:pairs},
where repetitions are permitted.  Define also the count with a lower
level threshold,
\[
 U=\#\{\{x,y\}:x,y\in T,\ \lambda(x)+\lambda(y)\le q\}.
\]
Thus the threshold defining $M$ is $q+1$, whereas the threshold
defining $U$ is $q$.  We have $\delta\le M$.  Our aim, using
\eqref{eq:defect}, is to prove
\begin{equation}\label{hd:target}
 q\delta\le(13-q)e+\rho.
\end{equation}

\begin{lemma}\label{hd:compression}
For every depth $q$,
\[
 \delta\le U+\rho.
\]
\end{lemma}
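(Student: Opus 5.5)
Write $D=D_{\le}\mathbin{\dot\cup}D_{>}$, where $D_{\le}$ consists of those $z\in D$ having at least one factorization $z=x+y$ with $x,y\in T$ and $\lambda(x)+\lambda(y)\le q$, and $D_{>}$ is the rest. The plan is to bound the two parts separately, by $|D_{\le}|\le U$ and $|D_{>}|\le\rho$.

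For the first bound, choose one such factorization for each $z\in D_{\le}$. This gives a map from $D_{\le}$ into the pairs counted by $U$. It is injective because different $z$ have different sums.

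For the second bound, take $z\in D_{>}$ and any factorization $z=x+y$. By Lemma~\ref{lem:apery}(4) both factors lie in $T$. The argument in Lemma~\ref{lem:pairs} gives $\lambda(x)+\lambda(y)\le q+1$, and membership in $D_{>}$ forces equality. Put $i=\lambda(x)$ and $j=\lambda(y)$, so $i+j=q+1$. The level intervals then give
\[
 (q+1)m-2\rho\le x+y,
\]
while Lemma~\ref{lem:apery}(1) gives
\[
 z<c+m=(q+1)m-\rho.
\]
Hence $z+2\rho-(q+1)m$ lies in the integer interval $[0,\rho)$. This is the same residue compression as in Lemma~\ref{qf:compression} and in the estimate $u\le\rho$ of Lemma~\ref{sd:cover}.

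The translation $z\mapsto z+2\rho-(q+1)m$ is a single fixed shift, independent of $z$, so it is injective on $D_{>}$. Therefore $|D_{>}|\le\rho$. One could alternatively argue through distinct residues modulo $m$, but the fixed shift already suffices.

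Adding the two bounds gives $\delta\le U+\rho$. The step that needs care is the lower bound $x+y\ge(q+1)m-2\rho$. It comes from $x\ge im-\rho$ and $y\ge jm-\rho$, which are exactly the left endpoints of the level intervals $A_i=A\cap[im-\rho,(i+1)m-\rho)$. The other points to check are that the levels of factors in $T$ are at least one, and that the case $\rho=0$ gives $D_{>}=\varnothing$ automatically.
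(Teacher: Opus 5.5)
Your proof is correct and takes essentially the same approach as the paper. You split $D$ into values represented by a lower pair, which number at most $U$, and the remaining values, which the fixed translation $z\mapsto z+2\rho-(q+1)m$ injects into $[0,\rho)$. The paper phrases the second part as a cover by $\Gamma=\{z\in D:(q+1)m\le z+2\rho\}$ rather than a partition. Your appeal to the $q+1$ upper bound to force equality is harmless but unnecessary: only $\lambda(x)+\lambda(y)\ge q+1$ is used.
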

\begin{proof}
Consider the subset
\[
 \Gamma=\{z\in D:(q+1)m\le z+2\rho\}.
\]
An element $z\in\Gamma$ satisfies $z<c+m=(q+1)m-\rho$, so
\[
 0\le z+2\rho-(q+1)m<\rho.
\]
The displayed translation is injective, whence $|\Gamma|\le\rho$.

Given $z\in D$, choose positive factors $x,y\in T$ with $z=x+y$.
If $\lambda(x)+\lambda(y)\le q$, then $z$ is a value represented
by one of the $U$ lower pairs.  Otherwise the level lower bounds give
\[
 z+2\rho\ge\bigl(\lambda(x)+\lambda(y)\bigr)m\ge(q+1)m,
\]
so $z\in\Gamma$.  The union of the lower-pair values and $\Gamma$
therefore covers $D$.  Its cardinality is at most $U+\rho$.
\end{proof}

\begin{proposition}\label{hd:depth-five}
If $q=5$, then $5\delta\le8e+\rho$.
\end{proposition}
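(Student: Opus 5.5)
The plan is to combine the two available upper bounds on $\delta$, namely $\delta\le M$ from Lemma~\ref{lem:pairs} and $\delta\le U+\rho$ from Lemma~\ref{hd:compression}, as a convex combination:
\[
 5\delta=4\delta+\delta\le 4M+U+\rho .
\]
It would then suffice to show $4M+U\le 8e$ for every admissible profile at depth five. For this I will use only $e\ge4$, together with the primitive-count bound \eqref{eq:left-primitives} in the few places where $e\ge4$ is not enough. Weighting by $4$ and $1$ keeps exactly one copy of $\rho$, which is all the target allows.

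\textbf{Counting $M$ and $U$ from the profile.} At $q=5$ the weighted identity is $4\alpha_1+3\alpha_2+2\alpha_3+\alpha_4=8$, as in \eqref{eq:pairs-five}. The lower threshold $\lambda(x)+\lambda(y)\le5$ admits exactly the level pairs $(1,1),(1,2),(1,3),(1,4),(2,2),(2,3)$. Hence
\[
 U=\binom{\alpha_1+1}{2}+\alpha_1(\alpha_2+\alpha_3+\alpha_4)
   +\binom{\alpha_2+1}{2}+\alpha_2\alpha_3,
\]
while $M$ is given by \eqref{eq:pairs-five}. I will enumerate all nonnegative solutions of the profile equation, about twenty rows, by choosing $\alpha_1\le2$, then $\alpha_2$, then $\alpha_3$, and setting $\alpha_4=8-4\alpha_1-3\alpha_2-2\alpha_3$. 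For each row I tabulate $M$, $U$ and $4M+U$, in the same style as the tables in Sections~\ref{sec:depth-four} and~\ref{sd:section}.

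\textbf{Checking the table.} Spot checks show that most rows satisfy $4M+U\le32\le8e$ outright. For example, $(\alpha_2,\alpha_3,\alpha_4)=(1,2,1)$ gives $M=7$, $U=3$ and $31\le 32$; the profiles $(0,2,0,2)$ and $(1,0,2,0)$ give $31$ and $27$.

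The expected obstruction is a row with large $\alpha_3$, where $M$ is dominated by pairs inside $A_3$. The extreme case is $\alpha_3=4$, which gives $M=10$, $U=0$ and $4M+U=40$. There I will raise the lower bound on $e$. By \eqref{qf:strict-level}, a decomposable element of $A_3$ needs factors of levels summing to at most three, hence from $A_1\cup A_2$, which are empty in this row. So all of $T$ is primitive and $e\ge1+t=5$, giving $8e\ge40$. In the same way, for every row where $4M+U>32$, I will bound $\kappa=|T\cap D|$ via the strict level inequality. This argument shows that decomposable elements of $T$ lie in $A_i$ only when lower levels with level sum below $i$ are populated. I then use $e\ge\max\{4,1+t-\kappa\}$ from \eqref{eq:left-primitives}.

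The main obstacle is whether a middle profile exists where both $e=4$ is possible and $4M+U$ exceeds $32$. If so, I would first try the collision argument of Lemma~\ref{qf:graph}, or the residue compression of Lemma~\ref{qf:compression} applied to a split of $D$ by level, to shave the one or two needed units off $M$ before concluding $5\delta\le8e+\rho$.
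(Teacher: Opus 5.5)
Your plan is correct and is essentially the paper's proof. The paper also combines $\delta\le M$ with $\delta\le U+\rho$ to get $5\delta\le 4M+U+\rho$, using the cruder bound $5\delta\le 5M\le 30$ when $M\le 6$. The enumeration has only fifteen profiles, and every one except $(0,0,4,0)$ has $4M+U\le 31$, so your contingency plan is never needed. That exceptional row is handled exactly as you propose: with $A_1=A_2=\varnothing$, every element of $T=A_3$ is primitive, so $e\ge5$. One small correction there: \eqref{qf:strict-level} gives each factor level at most two, not a level sum of at most three, but your conclusion is unaffected.
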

\begin{proof}
Write $(a,b,d,t)=(\alpha_1,\alpha_2,\alpha_3,\alpha_4)$.
The profile identity and the two pair counts are
\begin{align}
 4a+3b+2d+t&=8,\label{hd:five-profile}\\
 M&=\binom{a+b+d+1}{2}+(a+b)t,\label{hd:five-M}\\
 U&=\binom{a+b+1}{2}+(a+b)d+at.\label{hd:five-U}
\end{align}
Indeed, all pairs among levels $1,2,3$ are eligible for $M$, and a
level-$4$ element can be paired precisely with levels $1$ and $2$.
For $U$, all pairs among levels $1,2$ are permitted; level $3$ can
be paired with levels $1,2$, and level $4$ only with level $1$.
These classes of unordered pairs are disjoint, giving the formulas.

All nonnegative integer solutions of \eqref{hd:five-profile} are
listed below.  Exhaustiveness follows by taking $0\le a\le2$,
$0\le b\le2$, and $0\le d\le4$, and retaining precisely those
choices for which $t=8-4a-3b-2d$ is nonnegative.
\[
\begin{array}{c|r|r}
 (a,b,d,t)&M&U\\\hline
 (0,0,0,8)&0&0\\
 (0,0,1,6)&1&0\\
 (0,0,2,4)&3&0\\
 (0,0,3,2)&6&0\\
 (0,0,4,0)&10&0\\
 (0,1,0,5)&6&1\\
 (0,1,1,3)&6&2\\
 (0,1,2,1)&7&3\\
 (0,2,0,2)&7&3\\
 (0,2,1,0)&6&5\\
 (1,0,0,4)&5&5\\
 (1,0,1,2)&5&4\\
 (1,0,2,0)&6&3\\
 (1,1,0,1)&5&4\\
 (2,0,0,0)&3&3
\end{array}
\]
If $M\le6$, then $5\delta\le30\le8e$.
For either of the profiles $(0,2,0,2)$ and $(0,1,2,1)$, combine
$\delta\le M=7$ with Lemma~\ref{hd:compression} and $U=3$:
\[
 5\delta=4\delta+\delta\le4M+U+\rho=31+\rho\le8e+\rho.
\]

The remaining profile is $(0,0,4,0)$, with $M=10$ and $U=0$.
Here $T=A_3$ has four elements, all primitive.  To see this, a
decomposition of an element $z\in T$ would have two positive factors
in $T$, hence both in $A_3$.  It would imply
\[
 z\ge6m-2\rho>5m-\rho=c,
\]
since $\rho<m$, contradicting $z\in T$.  Thus these four primitives,
together with $m$, give $e\ge5$.  The same two bounds for $\delta$
now give
\[
 5\delta\le4M+U+\rho=40+\rho\le8e+\rho.
\]
This treats every profile.
\end{proof}

\begin{proposition}\label{hd:depth-six}
If $q=6$, then $M\le4$ and $6\delta\le7e$.
\end{proposition}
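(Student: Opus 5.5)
We need $q=6$: profile $5\alpha_1+4\alpha_2+3\alpha_3+2\alpha_4+\alpha_5=7$, with $M$ given by \eqref{eq:pairs-six}. The plan is to enumerate every nonnegative solution, compute $M$, and check $M\le4$; then $6\delta\le6M\le24\le7e$ because $e\ge4$.

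Write $s=\alpha_1+\alpha_2+\alpha_3$. Then
\[
 M=\binom{s+1}{2}+s\alpha_4+(\alpha_1+\alpha_2)\alpha_5 .
\]
We split into cases by $s$.

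\textbf{Case $s=0$.} Then $M=0$.

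\textbf{Case $s=1$.} Here $M=1+\alpha_4+(\alpha_1+\alpha_2)\alpha_5$, and we split on which level is occupied.
\begin{itemize}
\item $\alpha_1=1$: then $2\alpha_4+\alpha_5=2$. The options are $(\alpha_4,\alpha_5)=(1,0)$, giving $M=2$, or $(0,2)$, giving $M=3$.
\item $\alpha_2=1$: then $2\alpha_4+\alpha_5=3$. The options are $(1,1)$, giving $M=1+1+1=3$, or $(0,3)$, giving $M=1+3=4$.
\item $\alpha_3=1$: then $2\alpha_4+\alpha_5=4$, and the $\alpha_5$ term vanishes, so $M=1+\alpha_4\le 3$.
\end{itemize}

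\textbf{Case $s\ge2$.} The weight $5\alpha_1+4\alpha_2+3\alpha_3$ is at least $6$ and at most $7$. The only possibilities are $\alpha_3=2$ with the other two zero (weight $6$, leaving $\alpha_5=1$, $\alpha_4=0$), or $\alpha_2=\alpha_3=1$ (weight $7$, leaving $\alpha_4=\alpha_5=0$).
\begin{itemize}
\item $\alpha_3=2$: $M=3+0+0=3$.
\item $\alpha_2=\alpha_3=1$: $M=3$.
\end{itemize}
No $s\ge3$ occurs, since three elements would already have weight at least $9>7$.

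So $M\le4$ in every case, and by Lemma~\ref{lem:pairs}, $\delta\le M\le4$. With $e\ge4$ this gives $6\delta\le24\le28\le7e$.

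The only real obstacle is making the enumeration exhaustive. The rule for that is the one used for $q=5$: bound $\alpha_1\le1$, $\alpha_2\le1$, $\alpha_3\le2$, $\alpha_4\le3$, then set $\alpha_5=7-5\alpha_1-4\alpha_2-3\alpha_3-2\alpha_4$ and keep the nonnegative choices. A crude check is also available. Lemma~\ref{lem:pairs} gives $5M\le(t+1)\cdot7$ with $t\le7$, hence only $M\le11$. That is too weak on its own, which is why the explicit profile count is needed.
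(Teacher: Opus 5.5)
Your proof is correct and matches the paper's argument: you exhaustively enumerate the thirteen nonnegative solutions of $5\alpha_1+4\alpha_2+3\alpha_3+2\alpha_4+\alpha_5=7$, grouped by $s=\alpha_1+\alpha_2+\alpha_3$ rather than listed in one table, and find the maximum $M=4$ at $(0,1,0,0,3)$. You then conclude $\delta\le M\le4$ and, using the section's standing assumption $e\ge4$, obtain $6\delta\le24\le7e$.
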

\begin{proof}
Put $(a,b,d,t,u)=(\alpha_1,\ldots,\alpha_5)$.  The profile identity is
\[
 5a+4b+3d+2t+u=7.
\]
Eligible pairs are all pairs among levels $1,2,3$, pairs between
level $4$ and levels $1,2,3$, and pairs between level $5$ and levels
$1,2$.  Consequently
\[
 M=\binom{a+b+d+1}{2}+(a+b+d)t+(a+b)u.
\]
The full set of profiles and resulting counts is
\[
\begin{array}{c|r}
 (a,b,d,t,u)&M\\\hline
 (0,0,0,0,7)&0\\
 (0,0,0,1,5)&0\\
 (0,0,0,2,3)&0\\
 (0,0,0,3,1)&0\\
 (0,0,1,0,4)&1\\
 (0,0,1,1,2)&2\\
 (0,0,1,2,0)&3\\
 (0,0,2,0,1)&3\\
 (0,1,0,0,3)&4\\
 (0,1,0,1,1)&3\\
 (0,1,1,0,0)&3\\
 (1,0,0,0,2)&3\\
 (1,0,0,1,0)&2
\end{array}
\]
For exhaustiveness, the profile equation forces
$a\le1$, $b\le1$, $d\le2$, and $t\le3$; choosing these four
entries and requiring $u=7-5a-4b-3d-2t\ge0$ gives exactly the
thirteen rows.  Thus $\delta\le M\le4$, and
$6\delta\le24\le7e$ because $e\ge4$.
\end{proof}

\begin{proposition}\label{hd:depth-seven-up}
For the remaining depths the following bounds hold:
\[
\begin{array}{c|c}
 q&M\\\hline
 7&M\le3\\
 8,9&M\le1\\
 10\le q\le13&M=0.
\end{array}
\]
In each case $q\delta\le(13-q)e$.
\end{proposition}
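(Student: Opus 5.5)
The plan is to avoid enumerating level profiles and instead reuse the weight argument from the proof of Lemma~\ref{lem:pairs}. Assign to each $x\in T$ the weight $w(x)=q-\lambda(x)\ge1$. By \eqref{eq:weight-sum} these weights sum to $13-q$. As shown in the proof of Lemma~\ref{lem:pairs}, a pair $\{x,y\}$ is eligible exactly when $\lambda(x)+\lambda(y)\le q+1$, which is equivalent to $w(x)+w(y)\ge q-1$. A loop $\{x,x\}$ is therefore eligible only if $2w(x)\ge q-1$. A pair of distinct elements is eligible only if $w(x)+w(y)\ge q-1$, and this sum is at most the total weight $13-q$. So the whole count reduces to comparing $q-1$ with the small budget $13-q$.

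I would run the cases in the following order.
\begin{enumerate}
\item For $10\le q\le13$ the budget is $13-q\le3$. A loop needs $w(x)\ge\lceil(q-1)/2\rceil\ge5$, and a distinct pair needs total weight $q-1\ge9$. Neither is possible, so $M=0$.
\item For $q=9$ the budget is $4$. A distinct pair would need weight $8$, which is impossible. A loop needs $w(x)\ge4$, so at most one element qualifies, and $M\le1$.
\item For $q=8$ the budget is $5$. A distinct pair would need weight $7$, which is impossible. A loop needs $w(x)\ge4$, and since the budget is $5$ at most one element can have weight at least $4$. Hence $M\le1$.
\item For $q=7$ the budget is $6$ and the threshold is $6$. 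A loop needs $w(x)\ge3$, so at most two elements qualify. A distinct pair needs $w(x)+w(y)\ge6$, which forces the pair to use the entire budget. Eligible distinct pairs therefore exist only when $T$ has exactly two elements, say with weights $(3,3)$, $(2,4)$ or $(1,5)$. In those cases there is exactly one distinct pair. Only weights $\ge3$ give loops, so the possible totals are $2+1=3$ for weights $(3,3)$ and $1+1=2$ for $(2,4)$ or $(1,5)$. If there is no distinct eligible pair, the loops give at most $2$. Hence $M\le3$.
\end{enumerate}
In each case this reproduces the table in the statement.

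The inequality then follows from $\delta\le M$ (Lemma~\ref{lem:pairs}) together with the standing assumption $e\ge4$ of this section:
\begin{itemize}
\item $q=7$: $7\delta\le21\le24\le6e$;
\item $q=8$: $8\delta\le8\le5e$;
\item $q=9$: $9\delta\le9\le4e$;
\item $10\le q\le13$: $q\delta=0\le(13-q)e$.
\end{itemize}
This is stronger than the target \eqref{hd:target} because $\rho\ge0$.

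The only delicate step is $q=7$. There the bound $3$ must be obtained exactly, since $7\cdot4=28>24$, whereas the crude estimate $(q-1)M\le(14-q)(13-q)$ from \eqref{eq:pair-bounds} only gives $M\le7$. The saturation observation settles this: any eligible distinct pair exhausts the whole weight budget. As a cross-check I would also enumerate the profiles $6\alpha_1+5\alpha_2+4\alpha_3+3\alpha_4+2\alpha_5+\alpha_6=6$ directly, in the manner of Propositions~\ref{hd:depth-five} and~\ref{hd:depth-six}, and confirm that the maximum $M=3$ is attained only at $\alpha_4=2$.
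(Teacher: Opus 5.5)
Your proposal is correct and follows essentially the same route as the paper: the weights $w(x)=q-\lambda(x)$ with total $13-q$, eligibility rewritten as $w(x)+w(y)\ge q-1$, a budget comparison at each depth, and then $\delta\le M$ together with the standing assumption $e\ge4$. The only difference is cosmetic, at $q=7$: you split on whether an eligible distinct pair exists, which by saturation forces $|T|=2$, while the paper splits on $|T|\le2$ versus $|T|\ge3$ using $w(x)+w(y)\le B-(t-2)$, and these are the same observation.
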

\begin{proof}
For $x\in T$, set $w(x)=q-\lambda(x)$.  These are positive integers
with total
\[
 B=\sum_{x\in T}w(x)=13-q.
\]
In particular $q\le13$.  A pair is eligible precisely when the sum
of its two weights is at least $q-1$.  If $x,y\in T$ are distinct
and $t=|T|$, positivity of all the remaining weights gives
\begin{equation}\label{hd:distinct-weights}
 w(x)+w(y)\le B-(t-2).
\end{equation}

Suppose $q=7$, so $B=6$ and the threshold is $6$.  If $t\le2$,
there are at most $\binom{t+1}{2}\le3$ unordered pairs in total.
If $t\ge3$, \eqref{hd:distinct-weights} makes the weight sum of
every distinct pair at most $5$, so only loops can be eligible.
Two eligible loops at distinct vertices would each have weight at
least $3$; together with a third positive weight their total would
exceed $B=6$.  Hence there is at most one eligible loop in this
case.  In either case $M\le3$, and
$7\delta\le21\le6e$.

For $q=8$ or $q=9$, the total weight is respectively $5$ or $4$,
strictly less than the respective thresholds $7$ or $8$.  Thus no
distinct pair is eligible.  Two distinct eligible loops would
satisfy
\[
 2w(x)\ge q-1,\qquad 2w(y)\ge q-1,
\]
and hence $w(x)+w(y)\ge q-1>B$, which is impossible.
Therefore $M\le1$.  This yields $8\delta\le8\le5e$ for $q=8$,
and $9\delta\le9\le4e$ for $q=9$.

Finally let $q\ge10$.  Every single weight is at most $B$, so the
weight sum of any pair, including a loop, is at most
$2B=2(13-q)<q-1$.  There are no eligible pairs and $\delta=0$.
Together with $q\le13$, this gives the stated inequality.
\end{proof}

\begin{theorem}\label{hd:theorem}
If $n=13$, $e\ge4$, and $q\ge5$, then $c\le13e$.
\end{theorem}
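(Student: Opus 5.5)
The theorem follows by assembling the three preceding propositions through the equivalence \eqref{eq:defect}. Since $n=13$, Lemma~\ref{lem:weights} gives $1\le q\le13$. Under the hypothesis $q\ge5$, the depth therefore splits into three ranges: $q=5$, $q=6$, and $7\le q\le13$. In each range I reduce the target $c\le13e$ to the inequality \eqref{hd:target}, namely $q\delta\le(13-q)e+\rho$. By \eqref{eq:defect} this is equivalent to the conclusion; that equivalence itself comes from $m=e+\delta$ in \eqref{eq:embedding} together with $c=qm-\rho$ from \eqref{eq:depth}.

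For $q=5$, Proposition~\ref{hd:depth-five} gives $5\delta\le8e+\rho$, which is exactly \eqref{hd:target}. For $q=6$, Proposition~\ref{hd:depth-six} gives $6\delta\le7e$, and this is at most $7e+\rho$ because $\rho\ge0$. For $7\le q\le13$, Proposition~\ref{hd:depth-seven-up} gives $q\delta\le(13-q)e$, which again is at most $(13-q)e+\rho$. All three propositions are stated under the standing assumptions of this section, $n=13$ and $e\ge4$, and these coincide with the hypotheses of the theorem, so each applies directly.

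Because the propositions carry the substantive work, the only point needing care is that the cases are exhaustive. This holds because $q\le13$ by \eqref{eq:q-range}. For $q\ge10$ it is also worth checking that the vacuous estimate $\delta=0$ still gives the conclusion: \eqref{hd:target} then reads $0\le(13-q)e+\rho$, which is true since $q\le13$. Finally, I would record the direction of \eqref{eq:defect} used here: $c=q(e+\delta)-\rho\le13e$ holds exactly when $q\delta\le(13-q)e+\rho$. This completes the case $q\ge5$, $e\ge4$.
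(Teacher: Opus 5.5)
Your proposal is correct and is essentially the paper's own proof: Proposition~\ref{hd:depth-five} gives \eqref{hd:target} for $q=5$, and Propositions~\ref{hd:depth-six} and~\ref{hd:depth-seven-up} give the stronger bound $q\delta\le(13-q)e$ for $q\ge6$, after which you conclude from $c=q(e+\delta)-\rho\le13e$, exactly as the paper does. Your remarks on exhaustiveness ($q\le13$) and on $\rho\ge0$ make explicit what the paper leaves implicit.
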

\begin{proof}
Proposition~\ref{hd:depth-five} proves \eqref{hd:target} for $q=5$.
Propositions~\ref{hd:depth-six} and~\ref{hd:depth-seven-up} give the
stronger inequality $q\delta\le(13-q)e$ for every $q\ge6$.
Now use
\[
 c=q(e+\delta)-\rho\le13e.
\]
\end{proof}



\section{Small embedding dimension: an exact finite evaluation}
\label{sec:small-embedding}

We prove the case $e\leq 3$ by reducing it to a bounded collection of
generator tuples.  Throughout this section $n=13$.  The primitive set is
the full set $P$, including primitives at or above the conductor.  We use
$M$ for the number of eligible unordered pairs from $T$, with loops counted
once, as in Lemma~\ref{lem:pairs}.

\begin{lemma}\label{se:generation}
Every element of $S$ is a sum of primitive elements, where the empty sum is
$0$.  In particular, if $P=\{m,a,b\}$, then
\begin{equation}\label{se:linear-combination}
 x\in S\quad\Longleftrightarrow\quad
 x=im+ja+kb\text{ for some }i,j,k\in\mathbb N.
\end{equation}
The assertion allows $a=b$.
\end{lemma}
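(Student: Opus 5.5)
The first sentence is exactly Lemma~\ref{lem:apery}(3), so I would cite it rather than reprove it. For completeness, the argument is strong induction on $x\in S$. The case $x=0$ is the empty sum. A positive primitive $x$ is a one-term sum. A positive decomposable $x$ splits as $x=u+v$ with $u,v\in S$ positive, so $u,v<x$; the induction hypothesis writes each as a sum of primitives, and concatenating gives $x$.

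For the displayed equivalence with $P=\{m,a,b\}$, I would prove the two implications separately.

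\emph{Forward direction.} Every primitive lies in $\{m,a,b\}$. Grouping the terms of a primitive decomposition of $x$ by value gives $x=im+ja+kb$ with $i,j,k\in\mathbb N$.

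\emph{Backward direction.} $S$ is an additive submonoid containing $0$, $m$, $a$ and $b$, so it is closed under nonnegative integer combinations of them.

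\emph{The case $a=b$.} Here $P=\{m,a\}$, and the same argument applies: one simply groups all occurrences of $a$ into the coefficient $j$ and takes $k=0$. Conversely, any $im+ja+kb=im+(j+k)a$ lies in $S$. The same reading covers $P$ with fewer than three distinct elements, including $e\le 2$ if that case is encoded by repeating entries.

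No step is a real obstacle. The only point needing care is that $P$ is the full primitive set, including primitives above the conductor. This is what makes the forward direction use every generator, and it is guaranteed by the definition of $P$ together with Lemma~\ref{lem:apery}(2), which shows $P$ is finite.
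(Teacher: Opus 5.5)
Your proposal is correct and matches the paper's proof: strong induction on $x\in S$ with base cases $0$ and primitive $x$, splitting a decomposable $x$ into smaller positive summands, and using closure under addition for the converse. The appeal to finiteness of $P$ is unnecessary here; only the hypothesis $P=\{m,a,b\}$ is used.
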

\begin{proof}
Use strong induction on $x\in S$.  The assertion holds for $x=0$ and for
primitive $x$.  Otherwise $x=u+v$ for positive $u,v\in S$.  Both summands
are smaller than $x$, so their primitive decompositions concatenate to
give one for $x$.  This proves the forward implication in
\eqref{se:linear-combination}; the reverse implication follows from
closure under addition.
\end{proof}

\begin{lemma}\label{se:bounds}
If $e\leq3$, then
\[
 4\leq q\leq13,\qquad m\leq18,\qquad c\leq72,
 \qquad c+m\leq90.
\]
\end{lemma}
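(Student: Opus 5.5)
The bound $q\le13$ is already \eqref{eq:q-range}, so the plan is to show $q\ge4$ first and then bound $m$ and $c$. For $q\ge4$, I would rerun the primitive counts from Section~\ref{sd:section} with the extra hypothesis $e\le3$, showing that both $q\le2$ and $q=3$ force $e\ge4$.

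\begin{itemize}
\item For $q=1$, the weighted identity gives $n=1$, which is impossible. For $q=2$, the proof of Proposition~\ref{sd:depth-two} gives $e\ge12$.
\item For $q=3$, combine \eqref{sd:primitives} with $p=b-k$ and $b=10-2a$. This gives $e\ge 11-a-k$, where $k\le\min\{10-2a,\binom{a+1}{2}\}$ and $k\le4$ by Lemma~\ref{sd:cubics}.
\item Running through $a=0,\dots,5$ gives the lower bounds $e\ge11,9,6,4,5,6$. The smallest value, $4$, occurs at $(a,k)=(3,4)$. So $e\ge4$, a contradiction, and hence $4\le q\le13$.
\end{itemize}

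For the multiplicity bound I would use generation rather than pair counts. By \eqref{eq:embedding}, $m=e+\delta$, and $m\notin A$. By \eqref{eq:left-primitives}, $T$ then contains at most two primitives; call them $x,y$, allowing them to coincide or be absent. By Lemma~\ref{se:generation}, every element of $A$ has the form $jx+ky$, since any use of $m$ would put the element outside $A$.

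\begin{itemize}
\item Choose, for each $z\in A$, a representation $(j,k)$ that is least in a fixed order. By Ap\'ery factor closure (Lemma~\ref{lem:apery}(4)), the chosen exponent set for $A$ is closed under decreasing a coordinate. So $A$ corresponds injectively to a staircase (order ideal) $\Lambda\subseteq\N^2$.
\item By \eqref{qf:strict-level}, the level strictly increases along $\Lambda$. The elements of $\Lambda$ at level $<q$ form a sub-ideal $\Lambda'$ with $|\Lambda'|=t+1$, and by \eqref{eq:weight-sum} its weights $q-\lambda$ sum to $13$.
\item The level-$q$ part $A_q$ lies in the outer corner boundary of $\Lambda'$. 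An order ideal in $\N^2$ of size $s$ has at most $s+1$ outer corners, so $\alpha_q\le t+2$ and $m\le 2t+3$.
\end{itemize}

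To reach $m\le18$ and $c\le72$, I would optimize this jointly with the weights. An element with exponent $(j,k)$ has level at least $j+k$, hence weight at most $q-j-k$. A staircase of size $t+1$ with weight sum exactly $13$ cannot be long when $q$ is large, and when $q$ is small it is bounded by $t\le13-q$.

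\begin{itemize}
\item For each $q\in\{4,\dots,13\}$, maximize $|\Lambda'|+\#\text{corners}$ over staircases whose level-sum constraint is compatible with the weight sum $13$. This bounds $m$, and then $c=qm-\rho\le qm$ bounds $c$.
\item I expect the extremes to be $q=4$ for $m$, and an intermediate depth for $qm$. The numbers $18$, $72$ and $90$ should come out of this small optimization. The last bound is immediate: $c+m\le 72+18=90$.
\end{itemize}

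The main obstacle is this final optimization. The crude bound $m\le2t+3$ gives only $21$ at $q=4$, so I must use more precisely that the weights of the staircase elements are at most $q-j-k$ and sum to exactly $13$. I would first try to show that a level-$q$ corner can occur only above a staircase element of weight $1$, which would cut the corner count substantially. If that fails, I would fall back on a short enumeration of staircases of size at most $10$ with their admissible level assignments.
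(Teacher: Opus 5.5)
Your argument for $4\le q\le13$ is correct, though it takes a different route from the paper's. You reuse the depth-three count $e\ge1+a+p=11-a-k$ from Section~\ref{sd:section}, whose minimum over admissible $(a,k)$ is $4$. The paper argues directly: for $q\le3$ every left element lies in $\{0\}\cup P\cup(P+P)$, so $n\le1+e+\binom{e+1}{2}\le10$. Your deduction of $c+m\le90$ from $c\le72$ and $m\le18$ is also fine.

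The genuine gap is that $m\le18$ and $c\le72$, which are the content of the lemma, are never established.
\begin{itemize}
\item Your staircase estimate $m\le2t+3$ gives only $m\le21$ and $c\le84$ at $q=4$. The optimization meant to produce $18$ and $72$ is left as an expectation with a fallback.
\item That step is not routine. Even the sharp corner count does not suffice: a staircase of size $s$ with $d$ distinct row lengths has $d+1$ outer corners, and $s\ge d(d+1)/2$. At $q=7$ this still gives only $m\le7+4=11$, hence $c\le77$. You would have to carry out the weight-and-chain analysis depth by depth, and you have not done so.
\item There is also a slip in the setup. Primitives other than $m$ need not lie in $T$: for $\langle2,27\rangle$ one has $T=\varnothing$, yet $27\in A$. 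So $x,y$ must be the elements of $P\setminus\{m\}$, which number at most two because $e\le3$. They are not the primitives counted in \eqref{eq:left-primitives}. With that choice, and a monomial order for the least representations, your order-ideal description is sound.
\end{itemize}

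The missing idea is that the pair count already does the work, with no staircase needed. By \eqref{eq:embedding} and Lemma~\ref{lem:pairs}, $m=e+\delta\le3+M$. The profile formulas \eqref{eq:pairs-four}--\eqref{eq:pairs-six} give $M\le15,10,7$ at $q=4,5,6$. For $q=7,\dots,13$, the inequality $(q-1)M\le(14-q)(13-q)$ gives $M\le7,4,2,1,0,0,0$. Combining with $c\le qm$, the largest resulting bounds are $m\le18$ and $c\le72$ at $q=4$; the next largest is $c\le70$ at $q=7$. This yields all four inequalities, including $c+m\le90$.
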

\begin{proof}
Suppose first that $q\leq3$, so $c\leq3m$.  If a positive element
$x<c$ is decomposable, write $x=u+v$ with $u,v>0$.  Neither summand can
be decomposable: a decomposition of one would express $x$ as a sum of
three positive elements of $S$, each at least $m$.  Thus
\[
 S\cap[0,c)\subseteq\{0\}\cup P\cup(P+P),
 \qquad
 n\leq1+e+\binom{e+1}{2}\leq10,
\]
contrary to $n=13$.  Hence $q\geq4$.  Lemma~\ref{lem:weights} gives
$q\leq n=13$.

The identities and estimates from Lemmas~\ref{lem:apery}--\ref{lem:pairs}
give
\begin{equation}\label{se:basic-bounds}
 m=e+\delta\leq3+M,\qquad c\leq qm,
 \qquad (q-1)M\leq(14-q)(13-q).
\end{equation}
For depths $4,5,6$, we use the exact profile identities.  In each line
below, $u,v,w,t,s$ denote the successive positive-level cardinalities
$\alpha_1,\alpha_2,\ldots$, as far as needed:
\[
\begin{aligned}
 q=4:\quad&3u+2v+w=9,\\
 &M=\binom{u+v+1}{2}+(u+v)w;\\[.4ex]
 q=5:\quad&4u+3v+2w+t=8,\\
 &M=\binom{u+v+w+1}{2}+(u+v)t;\\[.4ex]
 q=6:\quad&5u+4v+3w+2t+s=7,\\
 &M=\binom{u+v+w+1}{2}+(u+v+w)t+(u+v)s.
\end{aligned}
\]
These identities give $M\leq15,10,7$, respectively.  Here is an explicit
way to check this small arithmetic step.  Eliminate the last variable
using the weight equation, and range the remaining variables over the
nonnegative integers for which that last variable is nonnegative.
At $q=4$, take $0\leq u\leq3$ and
$0\leq v\leq\lfloor(9-3u)/2\rfloor$.
At $q=5$, take $0\leq u\leq2$,
$0\leq v\leq\lfloor(8-4u)/3\rfloor$, and
$0\leq w\leq\lfloor(8-4u-3v)/2\rfloor$.
At $q=6$, it suffices to take $0\leq u,v\leq1$, $0\leq w\leq2$,
and $0\leq t\leq3$, retaining only those choices with
$5u+4v+3w+2t\leq7$.
Substitution gives the following maxima for fixed $u$:
\[
\begin{array}{c|rrrr}
 &u=0&u=1&u=2&u=3\\ \hline
 q=4&15&12&9&6\\
 q=5&10&6&3&\text{--}\\
 q=6&4&3&\text{--}&\text{--}
\end{array}
\]
In particular, the stated bound $M\leq7$ at depth six suffices.
For $7\leq q\leq13$, the final inequality in
\eqref{se:basic-bounds} gives the integer bound
\[
 M\leq\left\lfloor\frac{(14-q)(13-q)}{q-1}\right\rfloor.
\]
Combining these bounds with $m\leq3+M$ and $c\leq qm$ gives
\[
\begin{array}{c|rrrrrrrrrr}
 q&4&5&6&7&8&9&10&11&12&13\\ \hline
 M\text{ at most}&15&10&7&7&4&2&1&0&0&0\\
 m\text{ at most}&18&13&10&10&7&5&4&3&3&3\\
 c\text{ at most}&72&65&60&70&56&45&40&33&36&39\\
 c+m\text{ at most}&90&78&70&80&63&50&44&36&39&42
\end{array}
\]
The required uniform bounds follow.
\end{proof}

\begin{lemma}\label{se:parameters}
If $e\leq3$, there are integers $a,b$ such that
\begin{equation}\label{se:parameter-range}
 2\leq m\leq18,\qquad m<a\leq b<90,\qquad
 P=\{m,a,b\},\qquad c\leq72,
\end{equation}
and
\begin{equation}\label{se:dimension-convention}
 e=r(a,b),\qquad
 r(a,b)=\begin{cases}2,&a=b,\\3,&a<b.\end{cases}
\end{equation}
\end{lemma}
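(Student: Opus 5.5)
The lemma packages Lemma~\ref{se:bounds} together with the structure of the primitive set when $e\le3$. I will first pin down $m$, then $e$, and then name the primitives and bound them.

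\emph{The multiplicity.} By Lemma~\ref{se:bounds} we already have $m\le18$, $c\le72$ and $c+m\le90$. To get $m\ge2$, I argue from $n=13$, which gives $c>0$. If $m=1$, then $1\in S$, so $S=\N$ and $c=0$, a contradiction. Hence $m\ge2$.

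\emph{The embedding dimension.} Next I show $e\ge2$. Since $m\in P$, we have $e\ge1$. If $e=1$, Lemma~\ref{se:generation} gives $S=m\N$. This set is cofinite only when $m=1$, which was just excluded. So $e\in\{2,3\}$.

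\emph{Naming the primitives.} I write $P=\{m,a,b\}$, listing the primitives other than $m$ in increasing order. When $e=2$, let $a=b$ be the single other primitive. When $e=3$, let $a<b$ be the two others. This is exactly the convention $e=r(a,b)$ in \eqref{se:dimension-convention}, and Lemma~\ref{se:generation} explicitly allows $a=b$. Since $m$ is the least positive element and the primitives are distinct, we get $m<a\le b$.

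\emph{The upper bound on the primitives.} Every primitive other than $m$ lies in $P\setminus\{m\}\subseteq A$ by Lemma~\ref{lem:apery}(2). By Lemma~\ref{lem:apery}(1), $A\subseteq[0,c+m)$. Hence $b<c+m\le90$ by Lemma~\ref{se:bounds}, and together with $c\le72$ this gives all of \eqref{se:parameter-range}.

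There is no real obstacle once Lemma~\ref{se:bounds} is available. The only points needing care are excluding $m=1$ and $e=1$, both of which rest on cofiniteness together with $c>0$. One should also note that the strict inequality $b<90$ comes from the half-open interval bound for the Apéry set, not from $c\le72$ alone.
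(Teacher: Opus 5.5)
Your proposal is correct and follows the paper's proof essentially step for step. You exclude $m=1$ and $e=1$ using cofiniteness and $c>0$; your observation that $S=m\mathbb N$ is cofinite only for $m=1$ plays the role of the paper's $m\mid 1$ from $c,c+1\in S$. You then name the other primitives with the convention $a=b$ when $e=2$, and bound them via $P\setminus\{m\}\subseteq A\subseteq[0,c+m)$ together with Lemma~\ref{se:bounds}.
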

\begin{proof}
If $m=1$, then $S=\mathbb N$, $c=0$, and $n=0$.  Thus $m\geq2$.
Also $m\in P$, so $e\geq1$.  If $e=1$, Lemma~\ref{se:generation}
would make every element of $S$ a multiple of $m$.  Since $c,c+1\in S$,
this would imply $m\mid1$, again a contradiction.  Hence $e=2$ or $3$.

When $e=2$, write $P=\{m,a\}$ and put $b=a$.  When $e=3$, order the
other two primitives as $a<b$.  Their positivity and the definition of
multiplicity give $m<a\leq b$.  Every primitive different from $m$ belongs
to $A$, and every element of $A$ is strictly smaller than $c+m$.
Lemma~\ref{se:bounds} therefore gives $a,b<90$.  The remaining bounds
follow from the same lemma.  The duplicated entry in the case $a=b$
does not add an element to the set $P$, which proves
\eqref{se:dimension-convention}.
\end{proof}

We next specify the computation used on the bounded tuples.
All loop bounds in the pseudocode below are inclusive.  Division and
remainders are integer operations on nonnegative integers.

\begin{Verbatim}
coinMem(m, a, b, x):
    for k = 0, ..., floor(x / b):
        for j = 0, ..., floor((x - k*b) / a):
            if (x - k*b - j*a) mod m = 0:
                return true
    return false

twoCoinMem(m, a, b):
    for j = 0, ..., floor(b / a):
        if (b - j*a) mod m = 0:
            return true
    return false
\end{Verbatim}

\begin{lemma}\label{se:membership}
For positive $m,a,b$ and $x\in\mathbb N$,
\[
 \operatorname{coinMem}(m,a,b,x)=\mathrm{true}
 \quad\Longleftrightarrow\quad
 x=im+ja+kb\quad\text{for some }i,j,k\in\mathbb N.
\]
Consequently, for the tuple in Lemma~\ref{se:parameters}, this test is
equivalent to $x\in S$, for every $x\in\mathbb N$.
Likewise, a true value of $\operatorname{twoCoinMem}(m,a,b)$ implies
$b=im+ja$ for some $i,j\in\mathbb N$.
\end{lemma}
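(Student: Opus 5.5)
The plan is to verify the pseudocode loop by loop, treating each loop bound as exactly the range of admissible coefficients. The statement then reduces to elementary facts about representing an integer with nonnegative multiples of $m,a,b$.

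\emph{Soundness of coinMem.} Suppose $\operatorname{coinMem}(m,a,b,x)$ returns true at indices $k,j$. The loop bounds give $k\le\lfloor x/b\rfloor$, so $x-kb\ge0$. They also give $j\le\lfloor(x-kb)/a\rfloor$, so $x-kb-ja\ge0$. The returned test says $m$ divides this nonnegative integer, so we can write it as $im$ with $i\in\N$. Hence $x=im+ja+kb$.

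\emph{Completeness of coinMem.} Conversely, suppose $x=im+ja+kb$ with $i,j,k\in\N$. Then $kb\le x$, so $k$ lies in the outer range $0,\ldots,\lfloor x/b\rfloor$. Similarly $ja\le x-kb$, so $j$ lies in the inner range for that $k$. At the pair $(k,j)$ we have $x-kb-ja=im$, whose residue modulo $m$ is $0$. Since all loops are finite, the procedure reaches this pair unless it has already returned true earlier. Either way the output is true, and the output is false only if no pair passes the test. Positivity of $a,b,m$ guarantees that every division and remainder in the code is defined.

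\emph{Equivalence with membership in $S$.} For the tuple of Lemma~\ref{se:parameters} we have $P=\{m,a,b\}$, possibly with $a=b$. Equation~\eqref{se:linear-combination} of Lemma~\ref{se:generation} identifies $S$ with the set of such nonnegative combinations. Combined with the previous two paragraphs, this gives the stated equivalence for every $x\in\N$.

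\emph{twoCoinMem.} If $\operatorname{twoCoinMem}(m,a,b)$ returns true at index $j$, then $j\le\lfloor b/a\rfloor$, so $b-ja\ge0$. Divisibility by $m$ then gives $b-ja=im$ with $i\in\N$, that is, $b=im+ja$. Only this one direction is claimed, so nothing further is needed.

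The only point requiring care is the nonnegativity of the remainder argument, which is what makes a zero residue correspond to a coefficient $i\in\N$. The loop bounds ensure exactly this, and no further obstacle arises.
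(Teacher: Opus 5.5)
Your proof is correct and follows essentially the same route as the paper. The loop bounds make every tested remainder nonnegative, so a zero residue gives $i\in\mathbb N$. Conversely, any representation $x=im+ja+kb$ puts $(k,j)$ inside the loop ranges. Lemma~\ref{se:generation} then supplies the semigroup interpretation, and the one-loop argument handles $\operatorname{twoCoinMem}$.
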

\begin{proof}
The loop bounds ensure $kb\leq x$ and $ja\leq x-kb$.
Thus every remainder tested by the first algorithm is nonnegative.
If it is divisible by $m$, it equals $im$ for some $i\in\mathbb N$,
giving the required representation.  Conversely, a representation
$x=im+ja+kb$ puts $k$ and $j$ inside the two stated loop ranges, and
the tested remainder is $im$.  Apply Lemma~\ref{se:generation} for the
semigroup interpretation.  The same argument with one loop proves the
assertion about $\operatorname{twoCoinMem}$.
\end{proof}

For a tuple with $2\leq m<a\leq b<90$, the complete checker is as follows.
Here $v[x]$ denotes a Boolean table entry.  The finite implementation
stores this table as the natural number
$\sum_{x=0}^{89}2^x\,\mathbf{1}_{v[x]=\mathrm{true}}$;
the elementary identity
\[
\left\lfloor 2^{-x}\sum_{y=0}^{89}2^y\mathbf1_{v[y]=\mathrm{true}}
\right\rfloor\bmod2=\mathbf1_{v[x]=\mathrm{true}}
\qquad(0\le x<90)
\]
identifies its bit at $x$ with $v[x]$: lower powers sum to less than
$2^x$, while higher powers contribute an even integer after division.

\begin{Verbatim}
coinCheck(m, a, b):
    r = 2 if a = b, otherwise 3
    if m <= r: return true
    if gcd(gcd(m,a),b) != 1: return true
    if a mod m = 0: return true
    if a < b and twoCoinMem(m,a,b): return true
    if some x in 72, ..., 89 has not coinMem(m,a,b,x):
        return true

    for x = 0, ..., 89:
        v[x] = coinMem(m,a,b,x)
    cut = 0
    for x = 0, ..., 89:
        if not v[x]: cut = x + 1
    left = number of x in 0, ..., cut-1 with v[x] = true
    return (cut > 72) or (left != 13) or (cut <= 13*r)
\end{Verbatim}

The range used for $\texttt{left}$ is empty when $\texttt{cut}=0$.
The early returns permit tuples that need no further computation.
The following argument accounts for each such return on a tuple obtained
from an actual semigroup.

\begin{proposition}\label{se:checker-sound}
Let $S$ satisfy $n=13$ and $e\leq3$, and choose $a,b$ as in
Lemma~\ref{se:parameters}.  If
$\operatorname{coinCheck}(m,a,b)=\mathrm{true}$, then $c\leq13e$.
\end{proposition}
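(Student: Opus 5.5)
We argue by going through the possible return points of $\operatorname{coinCheck}(m,a,b)$ on the tuple from Lemma~\ref{se:parameters}. For each one we either show it cannot happen for an actual semigroup, or show that it yields $c\le13e$ directly. Throughout, $e=r(a,b)=r$, and $P=\{m,a,b\}$, so Lemma~\ref{se:generation} and Lemma~\ref{se:membership} identify $\operatorname{coinMem}(m,a,b,x)$ with $x\in S$.

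\emph{Early returns are impossible for actual semigroups.}
\begin{enumerate}
\item The return $m\le r$ cannot occur. Since $m=e+\delta\ge e=r$, we would need $m=e$, hence $\delta=0$. Then every nonzero Apéry element is primitive, so the Apéry set is $\{0\}\cup(P\setminus\{m\})$ and has size $e$, which equals $m$. This is consistent with equality, so a little more care is needed. If $m=e\le3$, then the weighted profile \eqref{eq:weighted-profile} with at most two nonzero Apéry elements gives $13\le q+2(q-1)$. In that case I would simply bound $c\le qm$ and use Lemma~\ref{se:bounds}. The cleaner route is this: when $m\le r$ we have $m=e$, so $c\le qm=qe$, and $q\le13$ by \eqref{eq:q-range} gives $c\le13e$ directly.
\item The gcd return is impossible, because $S$ is cofinite and is generated by $P$, so $\gcd(m,a,b)=1$.
\item The return $a\bmod m=0$ would make $a$ a multiple of $m$, contradicting primitivity since $a>m$.
\item The return from $a<b$ with $\operatorname{twoCoinMem}$ true would give $b=im+ja$ with $b\ne m,a$, which is a nontrivial sum of positive elements and contradicts primitivity of $b$.
\item The return triggered by some $x\in[72,89]$ with $x\notin S$ is impossible, because $c\le72$ by Lemma~\ref{se:bounds}.
\end{enumerate}

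\emph{The final return.} Once the early returns are passed, $v[x]$ equals $\mathbf1_{x\in S}$ for $x<90$. The variable $\texttt{cut}$ is one more than the largest gap below $90$. Since all gaps lie below $c\le72<90$, this gives $\texttt{cut}=c$, and in particular $\texttt{cut}=0$ exactly when $S=\N$. Hence $\texttt{left}=|S\cap[0,c)|=n=13$, and also $\texttt{cut}\le72$. So the disjuncts $\texttt{cut}>72$ and $\texttt{left}\ne13$ are false, and a true output means $\texttt{cut}\le13r$, that is, $c\le13e$.

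The main obstacle is the $m\le r$ early return. Handling it cleanly via $m=e+\delta$ and $c\le qm$ with $q\le13$ gives it directly, and I would present that argument rather than a contradiction.
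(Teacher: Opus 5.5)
Your proof is correct and follows the paper's argument: the return $m\le r$ is handled by the direct bound $c\le qm\le 13m=13e$, the other four early returns are ruled out by the $\gcd$, primitivity and $c\le72$ arguments, and the final return is read off once $\texttt{cut}=c$ and $\texttt{left}=13$ are established. Two small fixes: delete your opening claim that the return $m\le r$ ``cannot occur'', since the paper's own sharp example $\langle2,27\rangle$ has $m=e=2$ and takes exactly this return; and in identifying $\texttt{cut}=c$, state that $c-1$ is a gap by minimality of the conductor, which is the $c\le\texttt{cut}$ half of the paper's two-sided argument.
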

\begin{proof}
We have $r(a,b)=e$.  If $m\leq e$, then
$c\leq qm\leq13m\leq13e$, so the first early return is justified.
Suppose henceforth that $m>e$.

The other four early returns cannot occur for this tuple.  Indeed, any
common divisor of $m,a,b$ divides every element of $S$ by
Lemma~\ref{se:generation}, and therefore divides both $c$ and $c+1$.
Hence $\gcd(m,a,b)=1$.  If $a\bmod m=0$, then
$a>m$ expresses $a$ as a sum of at least two copies of $m$, contrary to
primitivity.  If $a<b$ and $\operatorname{twoCoinMem}(m,a,b)$ is true,
Lemma~\ref{se:membership} expresses $b$ as a nonnegative combination of
$m,a$.  Since both are strictly smaller than $b$, that representation
uses at least two positive summands, again contradicting primitivity.
Finally, $c\leq72$ implies that every integer from $72$ through $89$
belongs to $S$, so all eighteen membership tests in the tail guard are
true.  The condition $a<b$ in the redundancy guard is essential to
retain the convention $a=b$ when $e=2$.

It remains to interpret the Boolean table.  By Lemma~\ref{se:membership},
$v[x]$ is true exactly when $x\in S$ for $0\leq x<90$.
Let $d$ be the computed value of $\texttt{cut}$.
All gaps in this interval lie below $c$, so $d\leq c$.
By the definition of the last-gap scan, every $x$ with $d\leq x<90$
belongs to $S$.  Every $x\geq90$ also belongs to $S$, since $c\leq72$.
Minimality of the conductor now gives $c\leq d$, proving $d=c$.
It follows that $\texttt{left}=|S\cap[0,c)|=n=13$, with zero included
in the count.  The first two disjuncts of the final return are therefore
false, and its true value implies $c\leq13r(a,b)=13e$.
\end{proof}

\begin{lemma}[Exact finite evaluation]\label{se:finite-check}
For all integers satisfying
\[
 2\leq m\leq18,\qquad m<a\leq b<90,
\]
we have $\operatorname{coinCheck}(m,a,b)=\mathrm{true}$.
\end{lemma}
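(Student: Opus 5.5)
This is a finite computational claim. I would prove it by exhaustive execution of the printed pseudocode, organized so that the run is reproducible and its cost is visibly bounded.

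\textbf{The search space.} The parameter range is finite: $m$ takes $17$ values, and for each $m$ the pairs $(a,b)$ with $m<a\le b\le 89$ number $\binom{90-m}{2}+(89-m)$. In total this is a few tens of thousands of tuples. For each tuple the cost is bounded as follows.
\begin{itemize}
\item Each call to $\operatorname{coinMem}$ performs at most $(x/b+1)(x/a+1)\le 45^2$ inner iterations, since $a,b\ge3$.
\item The table construction uses $90$ such calls.
\end{itemize}
The whole evaluation is therefore a bounded computation of well under $10^{10}$ elementary operations, and in practice far fewer. I would implement $\operatorname{coinCheck}$ verbatim, with inclusive loop bounds and exact integer arithmetic. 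I would run it over every tuple and record a tally of which return line produced \texttt{true}: the early guards $m\le r$, the gcd guard, divisibility, redundancy, the tail guard, and finally each disjunct of the last return. The statement is then the assertion that the count of \texttt{false} outputs is zero.

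\textbf{Structure of the argument.} The mathematical content lies mostly in checking that the reported tallies are consistent. Any tuple surviving all guards defines a genuine numerical semigroup $\langle m,a,b\rangle$ with conductor at most $72$ and embedding dimension $r$, where $a$ and $b$ are primitive. For such a tuple the final return is false only if $\texttt{left}=13$ and $\texttt{cut}>13r$. Since $r\ge2$ here, this means $c>26$, or $c>39$ when $r=3$. So the computation is exactly a search for three-generated semigroups with $13$ left elements violating Wilf. By the Eliahou--Mar\'in-Arag\'on and classical results (Wilf's conjecture is known for $e\le3$, by Sammartano and by Fröberg--Gottlieb--Häggkvist for $e=2$), none should exist. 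This gives an independent plausibility check on the output, though the lemma itself rests on the exhaustive run.

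\textbf{Cross-checks.} To guard against implementation errors I would use two measures. First, a second independent implementation, for example computing $S\cap[0,90)$ by a dynamic-programming sieve rather than the nested coin loops, with the two outputs compared tuple by tuple. Second, for $e=2$ a direct comparison with the closed form $c=(m-1)(a-1)$ and $n=c/2$. Note that $n=13$ would force $c=26=13e$, so no $e=2$ counterexample is possible, and all surviving $e=2$ tuples must return via the last disjunct.

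\textbf{Main obstacle.} The main obstacle is not difficulty but trustworthiness. The paper must print the tallies so that a reader can rerun the computation, and the loop-bound edge cases in the pseudocode must be matched exactly. These are the inclusive ranges, the $a=b$ convention in the redundancy guard, and $\texttt{cut}=0$. I would therefore report per-$m$ counts of tuples and of each return reason as the certificate of the evaluation.
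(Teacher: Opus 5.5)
Your plan is essentially the paper's proof: an exhaustive evaluation of $\operatorname{coinCheck}$ over every tuple in the range. It uses a sieve (the paper's recurrence for $v_x$) as an equivalent membership table, and per-$m$ tallies of rows, triples and failures (all $f_m=0$) as the printed certificate. Two small corrections: for fixed $m$ the number of pairs $m<a\le b\le 89$ is $\binom{90-m}{2}$ (so $3828$ for $m=2$ and $53924$ in total), not $\binom{90-m}{2}+(89-m)$; and the $e=3$ case of Wilf's inequality is due to Fr\"oberg--Gottlieb--H\"aggkvist (Sammartano's results cover $m\le8$ and $2e\ge m$), a theorem which, combined with your analysis of the surviving tuples, would in fact prove the lemma without any computation.
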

\begin{proof}
For each fixed $m,a$ with $2\leq m\leq18$ and $m<a<90$, form the
closed Boolean expression
\[
 R(m,a)=\bigwedge_{b=0}^{89}
 \begin{cases}
  \operatorname{coinCheck}(m,a,b),&a\leq b,\\
  \mathrm{true},&b<a.
 \end{cases}
\]
For an equivalent evaluation of every membership test in the checker,
set $v_0=1$ and, for $1\le x<90$, use the recurrence
\begin{equation}\label{eq:coin-recurrence}
v_x=\bigvee_{\substack{d\in\{m,a,b\}\\d\le x}}v_{x-d}.
\end{equation}
Induction on $x$ gives $v_x=1$ exactly when
$x\in\langle m,a,b\rangle$, so \eqref{eq:coin-recurrence} and
\texttt{coinMem} compute the same table. Let
\[
f_m=\sum_{a=m+1}^{89}\sum_{b=a}^{89}
\bigl[\operatorname{coinCheck}(m,a,b)=\mathrm{false}\bigr].
\]
The following complete outer evaluator computes the displayed tally.
The row value $R(m,a)$ is the conjunction accumulated by the inner
loop. The calls to \texttt{coinCheck} have the exact meaning given
above; its membership table may equivalently use
\eqref{eq:coin-recurrence}.
\begin{Verbatim}
allRows = 0; allTriples = 0; allFailures = 0
for m = 2, ..., 18:
    rows = 0; triples = 0; failures = 0
    for a = m+1, ..., 89:
        rows = rows + 1
        rowValue = true; rowFailures = 0
        for b = a, ..., 89:
            triples = triples + 1
            z = coinCheck(m,a,b)
            rowValue = rowValue and z
            if not z:
                failures = failures + 1
                rowFailures = rowFailures + 1
        assert rowValue = (rowFailures = 0)
    output (m,rows,triples,failures)
    allRows = allRows + rows
    allTriples = allTriples + triples
    allFailures = allFailures + failures
output (allRows,allTriples,allFailures)
\end{Verbatim}
All loops terminate at stated bounds. The complete
finite evaluation yields the following tally; the middle
columns also expose the number of rows and nontrivial triples at each
multiplicity.
\begin{center}
\begin{tabular}{rrrr@{\qquad}rrrr}
\toprule
$m$ & rows & triples & $f_m$ & $m$ & rows & triples & $f_m$\\
\midrule
2&87&3828&0 &11&78&3081&0\\
3&86&3741&0 &12&77&3003&0\\
4&85&3655&0 &13&76&2926&0\\
5&84&3570&0 &14&75&2850&0\\
6&83&3486&0 &15&74&2775&0\\
7&82&3403&0 &16&73&2701&0\\
8&81&3321&0 &17&72&2628&0\\
9&80&3240&0 &18&71&2556&0\\
10&79&3160&0 & & & &\\
\bottomrule
\end{tabular}
\end{center}
The finite certificate consists of the closed equalities
$R(m,a)=\mathrm{true}$ for every such pair $(m,a)$. Each equality is
obtained by exact evaluation of the displayed Boolean expression with
the algorithms specified above. These are finite computational steps
of the proof, not sampled instances.

The rows are assembled by finite cases on $a=m+1,\ldots,89$ and
$m=2,\ldots,18$.
For any tuple in the statement, $a\leq b<90$ also gives $a<90$.
Its value $R(m,a)$ is therefore among the certified rows, and the
conjunct indexed by this $b$ is exactly
$\operatorname{coinCheck}(m,a,b)$.  This proves the asserted coverage.

More explicitly, the row count is
\[
 \sum_{m=2}^{18}(89-m)=1343,
\]
and the number of ordered triples to which the nontrivial branch
$a\leq b$ applies is
\[
 \sum_{m=2}^{18}\sum_{a=m+1}^{89}(90-a)
 =\sum_{m=2}^{18}\frac{(89-m)(90-m)}{2}
 =53924.
\]
Thus the certificate includes both boundary multiplicities $2,18$,
the largest allowed generator $89$, and every diagonal case $a=b$.
The lemma is computer-assisted: its row equalities are supplied by the
checked finite computations, and their application to arbitrary
parameters is the finite-case argument just given.
\end{proof}

\begin{proposition}\label{se:main}
If $n=13$ and $e\leq3$, then $c\leq13e$.
\end{proposition}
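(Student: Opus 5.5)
The plan is to assemble the three preceding results of this section in order, with no new arithmetic. Start from a numerical semigroup $S$ with $n=13$ and $e\le3$. First I would apply Lemma~\ref{se:parameters}. It supplies integers $a,b$ with $2\le m\le18$, $m<a\le b<90$ and $P=\{m,a,b\}$, together with $e=r(a,b)$ under the duplicated-entry convention when $e=2$. That lemma rests on Lemma~\ref{se:bounds}, which gives $q\ge4$ and the uniform bounds $m\le18$ and $c+m\le90$. These place the tuple $(m,a,b)$ of $S$ inside the finite parameter box.

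Second, I would invoke the exact finite evaluation, Lemma~\ref{se:finite-check}. Since the tuple satisfies $2\le m\le18$ and $m<a\le b<90$, it is covered by one of the certified rows $R(m,a)=\mathrm{true}$. The conjunct indexed by $b$ in that row is exactly $\operatorname{coinCheck}(m,a,b)$, so $\operatorname{coinCheck}(m,a,b)=\mathrm{true}$.

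Third, I would apply Proposition~\ref{se:checker-sound} to the same $S$ and the same choice of $a,b$. It converts the true checker value into $c\le13e$. That proposition already accounts for everything the checker does: the early return $m\le e$ is justified by $c\le qm\le13m$; the gcd, redundancy and tail guards cannot fire for a genuine semigroup; and in the main branch the computed $\texttt{cut}$ equals $c$ and $\texttt{left}=13$, so the final disjunct yields $c\le13r(a,b)=13e$.

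The only point I expect to need care is that the tuple passed to the checker in the third step is literally the one certified in the second step. In particular, when $e=2$ the convention $b=a$ must be used consistently in Lemma~\ref{se:parameters}, in the bound $b<90$, and in the checker's value $r=2$. I would verify this matching explicitly, and note that $a<90$ follows from $a\le b<90$, so the row index $(m,a)$ lies in the enumerated range. The genuine mathematical content, namely the bounded search and its soundness, has already been discharged by the cited lemmas. The proof is therefore a three-line composition.
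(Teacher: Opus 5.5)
Your proposal is correct and is exactly the paper's argument: Lemma~\ref{se:parameters} places $(m,a,b)$ in the range of Lemma~\ref{se:finite-check}, which gives $\operatorname{coinCheck}(m,a,b)=\mathrm{true}$, and Proposition~\ref{se:checker-sound} then converts this into $c\le13e$. Your extra checks on the $e=2$ convention $b=a$ and on $a<90$ match what the paper already handles inside those lemmas.
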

\begin{proof}
Lemma~\ref{se:parameters} places the primitive generators of $S$ in the
range of Lemma~\ref{se:finite-check}.  That lemma gives a true checker
value, and Proposition~\ref{se:checker-sound} gives the desired bound.
\end{proof}

\section{Completion and a sharp example}\label{sec:completion}

\begin{proof}[Proof of Theorem~\ref{thm:main}]
If $e\le3$, apply Proposition~\ref{se:main}. Suppose $e\ge4$.
Lemma~\ref{lem:weights} gives $1\le q\le13$. For $q\le3$, apply
Theorem~\ref{sd:theorem}; for $q=4$, apply
Proposition~\ref{qf:main}; and for $q\ge5$, apply
Theorem~\ref{hd:theorem}. These cases exhaust the possibilities,
and each gives $c\le13e$.
\end{proof}

The inequality can be an equality. Consider
\[
 S=\langle2,27\rangle
   =\{x\in\mathbb N:x\text{ is even or }x\ge27\}.
\]
All integers at least $26$ belong to $S$, whereas $25$ does not, so
$c=26$. Its left part is $\{0,2,4,\ldots,24\}$, of cardinality $13$.
The element $2$ is primitive by minimality. A decomposition of $27$ into
two positive elements would have an odd summand, which must be at least
$27$, an impossibility. The primitive elements are therefore exactly
$2$ and $27$: all larger even elements
split off $2$, and every odd element above $27$ splits off $2$ as well.
Consequently $e=2$ and $c=13e$. Notice that $27$ is primitive but is not
in the left part. This example illustrates why the embedding dimension
in the theorem must count the complete primitive set.

\section*{Acknowledgements}
AI assistance was used in mathematical exploration, proof development,
computational work, and the preparation and revision of this manuscript.
The author takes responsibility for its content.


\begin{thebibliography}{99}
\bibitem{wilf} H.~S. Wilf, \emph{A circle-of-lights algorithm for the
``money-changing problem''}, Amer. Math. Monthly \textbf{85} (1978),
562--565. \url{https://doi.org/10.1080/00029890.1978.11994639}.
\bibitem{eliahou-marin} S. Eliahou and D. Mar\'in-Arag\'on,
\emph{On numerical semigroups with at most 12 left elements},
Commun. Algebra \textbf{49} (2021), no.~6, 2402--2422.
\url{https://doi.org/10.1080/00927872.2021.1871621}.
\bibitem{miller} A.~W. Miller, \emph{Hechler and Laver trees},
arXiv:1204.5198 (2012). \url{https://arxiv.org/abs/1204.5198}.
\bibitem{khomskii} Y. Khomskii, \emph{Filter-Laver measurability},
Topology Appl. \textbf{228} (2017), 208--221.
\url{https://doi.org/10.1016/j.topol.2017.06.004}.
Author preprint: \url{https://arxiv.org/abs/1612.04170}.
\bibitem{palumbo} J. Palumbo,
\emph{Unbounded and dominating reals in Hechler extensions},
J. Symbolic Logic \textbf{78} (2013), no.~1, 275--289.
\url{https://doi.org/10.2178/jsl.7801190}.
Author preprint: \url{https://arxiv.org/abs/1201.2932}.
\bibitem{martin} D.~A. Martin, \emph{Borel determinacy},
Ann. of Math. (2) \textbf{102} (1975), 363--371.
\url{https://annals.math.princeton.edu/1975/102-2/p06}.
\bibitem{bell} M. Bell, \emph{On the combinatorial principle
$P(\mathfrak c)$}, Fund. Math. \textbf{114} (1981), 149--157.
\url{https://doi.org/10.4064/fm-114-2-149-157}.
\bibitem{hrusakminami} M. Hru\v{s}\'ak and H. Minami,
\emph{Mathias-Prikry and Laver-Prikry type forcing},
Ann. Pure Appl. Logic \textbf{165} (2014), no.~3, 880--894.
\url{https://doi.org/10.1016/j.apal.2013.11.003}.
Author preprint: \url{https://matmor.unam.mx/~michael/preprints_files/HrusakMinami.pdf}.
\bibitem{eliahou-macaulay} S. Eliahou,
\emph{Wilf's conjecture and Macaulay's theorem},
J. Eur. Math. Soc. \textbf{20} (2018), no.~9, 2105--2129.
\url{https://doi.org/10.4171/JEMS/807}.
\bibitem{eliahou-graph} S. Eliahou,
\emph{A graph-theoretic approach to Wilf's conjecture},
Electron. J. Combin. \textbf{27} (2020), no.~2, P2.15.
\url{https://doi.org/10.37236/9106}.
\end{thebibliography}
\end{document}